\documentclass[11pt]{article}
\usepackage[margin=1.25in]{geometry}
\usepackage{amssymb,amsfonts,amsmath,amsthm,amscd,dsfont,mathrsfs,bbold,pifont}
\usepackage{blkarray}
\usepackage{graphicx,float,psfrag,epsfig,color}
\usepackage{comment}
\usepackage{subcaption}

\usepackage{algorithm}
\usepackage[noend]{algpseudocode}

\makeatletter
\def\BState{\State\hskip-\ALG@thistlm}
\makeatother

	\usepackage{microtype}
	\usepackage[pdftex,pagebackref=true,colorlinks]{hyperref}
	\hypersetup{linkcolor=[rgb]{.7,0,0}}
	\hypersetup{citecolor=[rgb]{0,.7,0}}
	\hypersetup{urlcolor=[rgb]{.7,0,.7}}

\footnotesep 14pt
\floatsep 27pt plus 2pt minus 4pt      
\textfloatsep 40pt plus 2pt minus 4pt
\intextsep 27pt plus 4pt minus 4pt


\newcommand\independent{\protect\mathpalette{\protect\independent}{\perp}} 
\def\independent#1#2{\mathrel{\rlap{$#1#2$}\mkern2mu{#1#2}}}

\newcommand{\gss}{\mathbb{G}_1}
\newcommand{\gs}{\mathbb{G}_2}
\newcommand{\bin}{\mathrm{Bin}}
\newcommand{\dd}{D_+}
\newcommand{\argmax}{\mathrm{argmax}}

\newcommand{\mR}{\mathbb{R}} 

\newcommand{\mZ}{\mathbb{Z}}

\newcommand{\pp}{\mathbb{P}}

\newcommand{\e}{\varepsilon}

\DeclareMathOperator{\diag}{diag}

\newcommand{\X}{\mathcal{X}}

\newtheorem{theorem}{Theorem}
\newtheorem{lemma}{Lemma}
\newtheorem{corollary}{Corollary}
\newtheorem{definition}{Definition}
\newtheorem{remark}{Remark}
\newtheorem{example}{Example}


\hyphenation{op-tical net-works semi-conduc-tor}


\begin{document}

\title{Community detection in general stochastic block models: \\ fundamental limits and efficient recovery algorithms}
\author{Emmanuel Abbe and Colin Sandon}

\author{Emmanuel Abbe\thanks{Program in Applied and Computational Mathematics, and EE department, Princeton University, Princeton, USA, \texttt{eabbe@princeton.edu}. This research was partially supported by the 2014 Bell Labs Prize.} \and 
Colin Sandon\thanks{Department of Mathematics, Princeton University, USA,
\texttt{sandon@princeton.edu}.
}}

\date{}
\maketitle

\begin{abstract}
New phase transition phenomena have recently been discovered for the stochastic block model, for the special case of two non-overlapping symmetric communities. This gives raise in particular to new algorithmic challenges driven by the thresholds. This paper investigates whether a general phenomenon takes place for multiple communities, without imposing symmetry.

In the general stochastic block model $\text{SBM}(n,p,Q)$, $n$ vertices are split into $k$ communities of relative size $\{p_i\}_{i \in [k]}$, and  vertices in community $i$ and $j$ connect independently with probability $\{Q_{i,j}\}_{i,j \in [k]}$. This paper investigates the partial and exact recovery of  communities in the general SBM (in the constant and logarithmic degree regimes), and uses the generality of the results to tackle overlapping communities.

The contributions of the paper are: (i) an explicit characterization of the recovery threshold in the general SBM in terms of a new divergence function $D_+$, which generalizes the Hellinger and Chernoff divergences, and which provides an operational meaning to a divergence function analog to the KL-divergence in the channel coding theorem, (ii) the development of an algorithm that recovers the communities all the way down to the optimal threshold and runs in quasi-linear time, showing that exact recovery has no information-theoretic to computational gap for multiple communities, in contrast to the conjectures made for detection with more than 4 communities; note that the algorithm is optimal both in terms of achieving the threshold and in having quasi-linear complexity, (iii) the development of an efficient algorithm that detects communities in the constant degree regime with an explicit accuracy bound that can be made arbitrarily close to 1 when a prescribed signal-to-noise ratio (defined in term of the spectrum of $\diag(p)Q$) tends to infinity. 

%
\end{abstract}

\thispagestyle{empty}
\newpage

\tableofcontents

\thispagestyle{empty}
\newpage

\pagenumbering{arabic}


\section{Introduction}



Detecting communities (or clusters) in graphs is a fundamental problem in computer science and machine learning. This applies to a large variety of complex networks in social sciences and biology, as well as to data sets engineered as networks via similarly graphs, where one often attempts to get a first impression on the data by trying to identify groups with similar behavior. 
In particular, finding communities allows one to find like-minded people in social networks \cite{newman-girvan,social1}, to improve recommendation systems \cite{amazon,xu-rec}, to segment or classify images \cite{image1,image2}, to detect protein complexes \cite{ppi2,marcotte}, to find genetically related sub-populations \cite{genetics,gene-survey}, or to discover new tumor subclasses \cite{tumor}. 

While a large variety of community detection algorithms have been deployed in the past decades,  understanding the fundamental limits of community detection and establishing rigorous benchmarks for algorithms remains a major challenge. Significant progress has recently been made for the stochastic block model, but mainly for the special case of two non-overlapping communities. The goal of this paper is to establish the fundamental limits of recovering communities in general stochastic block models, with multiple (possibly overlapping) communities. We first provide some motivations behind these questions.  

Probabilistic network models can be used to model real networks \cite{newman-book}, to study the average-case complexity of NP-hard problems on graphs (such as min-bisection or max-cut \cite{dyer,bui,condon,bollobas-cut}), or 
to set benchmarks for clustering algorithms with well defined ground truth. In particular, the latter holds irrespective of how exactly the model fits the data sets, and is a crucial aspect in community detection as a vast majority of algorithms are based on heuristics and no ground truth is typically available in applications. This is in particular a well known challenge for Big Data problems where one cannot manually determine the quality of the clusters \cite{asa}. 

Evaluating the performance of algorithms on models is, however, non-trivial. In some regimes, most reasonable algorithms may succeed, while in others, algorithms may be doomed to fail due to computational barriers. Thus, an important question is to characterize the regimes where the clustering tasks can be solved efficiently or information-theoretically. In particular, models may benefit from asymptotic phase transition phenomena, which, in addition to being mathematical interesting, allow location of the bottleneck regimes to benchmark algorithms. Such phenomena are commonly used in coding theory (with the channel capacity \cite{shannon48}), or in constraint satisfaction problems (with the SAT thresholds, see  \cite{AchlioetalNature} and references therein). 

Recently, similar phenomena have been identified for the stochastic block model (SBM), one of the most popular network models exhibiting community structures \cite{holland,sbm1,sbm3,sbm4,bickel,newman2}. The model\footnote{See Section \ref{related_lit} for further references.} was first proposed in the 80s \cite{holland} and  received significant attention in the mathematics and computer science literature \cite{bui,dyer,boppana,jerrum,condon,carson}, as well as in the statistics and machine learning literature \cite{snij,bickel,rohe,choi}. The SBM puts a distribution on $n$-vertices graphs with a hidden (or planted) partition of the nodes into $k$ communities. Denoting by $p_i$, $i \in [k]$, the relative size of each community, and assuming that a pair of nodes in communities $i$ and $j$ connects independently with probability $Q_{i,j}$, the SBM can be defined by the triplet $(n,p,Q)$, where $p$ is a probability vector of dimension $k$ and $Q$ a $k\times k$ symmetric matrix with entries in $[0,1]$. 

The SBM recently came back at the center of the attention at both the practical level, due to extensions allowing overlapping communities \cite{mixed-core} that have proved to fit well real data sets in massive networks \cite{prem}, and at the theoretical level due to new phase transition phenomena discovered for the two-community case \cite{coja-sbm,decelle,massoulie-STOC,Mossel_SBM2,abh,mossel-consist}. To discuss these phenomena, we need to first introduce the figure of merits (formal definitions are in Section \ref{res-sec}):
\begin{itemize}
\item  {\bf Weak recovery} (also called detection). This only requires the algorithm to output a partition of the nodes which is  positively correlated with the true partition (whp\footnote{whp means with high probablity, i.e., with probability $1-o_n(1)$ when the number of nodes in the graph diverges.}). Note that weak recovery is relevant in the fully symmetric case where all nodes have identical average degree,\footnote{At least for the case for communities having linear size. One may otherwise define stronger notions of weak recovery that apply to non-symmetric cases.} since otherwise weak recovery can be trivially solved. If the model is perfectly symmetric, like the SBM with two equally-sized clusters having the same connectivity parameters, then weak recovery is non-trivial. Full symmetry may not be representative of reality, but it sets analytical and algorithmic challenges. The weak-recovery threshold for two symmetric communities was achieved efficiently in \cite{massoulie-STOC,Mossel_SBM2}, settling a conjecture established in \cite{decelle}. The case with more than two communities remains open. 
\item {\bf Partial recovery.} One may ask for the finer question of {\it how much} can be recovered about the communities. For a given set of parameters of the block model, finding the proportion of nodes (as a function of $p$ and $Q$) that can be correctly recovered (whp) is an open problem. Obtaining a closed form formula for this question is unlikely, even in the symmetric case with two communities. Partial results were obtained in \cite{mossel2} for two-symmetric communities, but the general problem remains open even for determining scaling laws. 
One may also consider the special case of partial recovery where only an $o(n)$ fraction of nodes is allowed to be mis-classified (whp), called almost exact recovery or weak consistency, but no sharp phase transition is to be expected for this requirement.
\item {\bf Exact recovery} (also called recovery or strong consistency.) Finally, one may ask for the regimes for which an algorithm can recover the entire clusters (whp). This is non-trivial for both symmetric and asymmetric parameters. One can also study ``partial-exact-recovery,'' namely, which communities can be exactly recovered. While exact recovery has been the main focus in the literature for the past decades (see table in Section \ref{related_lit}), the phase transition for exact recovery was only obtained last year for the case of two symmetric communities \cite{abh,mossel-consist}. The case with more than two communities remains open. 
\end{itemize}
This paper addresses items 2 and 3 for the general stochastic block model. 
Note that the above questions naturally require studying different regimes for the parameters. Weak recovery requires the edge probabilities to be $\Omega(1/n)$, in order to have many vertices in all but one community to be non-isolated  (i.e., a giant component in the symmetric case), and recovery requires the edge probabilities to be $\Omega(\ln(n)/n)$, in order to have all vertices in all but one community to be non-isolated (i.e., a connected graph in the symmetric case). The difficulty is to understand how much more is needed in order to weakly or exactly recover the communities. In particular, giants and connectivity have phase transition, and similar phenomena may be expected for weak and exact recovery. 

Note that these regimes are not only rich mathematically, but are also relevant for applications, as a vast collection of real networks ranging from social (LinkedIn, MSN), collaborative (movies, arXiv), or biological (yeast) networks and more were shown to be sparse \cite{sparse-network1,sparse-network2}. Note however that the average degree is typically not small in real networks, and it seems hard to distinguish between a large constant or a slowly growing function. Both regimes are of interest to us.  


Finally, there is an important distinction to be made between the information-theoretic thresholds, which do not put constraints on the algorithm's complexity, and the computational thresholds, which require polynomial-time algorithms. In the case of two symmetric communities, the information-theoretic and computational thresholds were proved to be the same for weak recovery \cite{massoulie-STOC,Mossel_SBM2} and exact recovery \cite{abh,mossel-consist}. A gap is conjectured to take place for weak recovery for more than 4 communities \cite{mossel-sbm}. No conjectures were made for exact recovery for multiple communities.

This paper focuses on partial and exact recovery (items 2 and 3) for the general stochastic block model with linear size communities, and uses the generality of the results to address overlapping communities (see Section \ref{overlap}).
Recall that for the case of two communities, if 
\begin{align*}
q_{in}&=a \ln(n)/n,\\
q_{out}&=b \ln(n)/n,
\end{align*}
are respectively the intra- and extra-cluster probabilities, with $a>b>0$, then exact recovery is possible if and only if
\begin{align}
\sqrt{a} - \sqrt{b} \geq \sqrt{2}, \label{2hell}
\end{align}
and this is efficiently achievable. However, there is currently no general insight regarding equation \eqref{2hell}, as it emerges from estimating a tail event for Binomial random variable specific to the case of two-symmetric communities. Moreover, no results are known to prove partial recovery bounds for more than two communities (recent progress where made in \cite{new-vu}). This represents a limitation of the current techniques, and an impediment to progress towards more realistic network models that may have overlapping communities, and for which analytical results are currently unknown.\footnote{Different models than the SBM allowing for overlapping communities have been studied for example in \cite{arora-overlap}.} We next present our effort towards such a general treatment.




\section{Results}\label{res-sec}
The main advances of this paper are:
\begin{itemize}
\item[(i)] an ({\tt Sphere-comparison}) algorithm that detect communities in the constant-degree general SBM with an explicit accuracy guarantee, such that when a prescribed signal-to-noise ratio --- defined in terms of the ratio $|\lambda_{\mathrm{min}}|^2/\lambda_{\mathrm{max}}$ where $\lambda_{\mathrm{min}}$ and $\lambda_{\mathrm{max}}$ are respectively the smallest\footnote{The smallest eigenvalue $\diag(p)Q$ is the one with least magnitude.} and largest eigenvalue of $\diag(p)Q$ --- tends to infinity, the accuracy tends to 1 and the algorithm complexity becomes quasi-linear, i.e., $o(n^{1+ \e})$, for all $\e>0$,
\item[(ii)] an explicit characterization of the recovery threshold in the general SBM in terms of a divergence function $D_+$, which provides a new operational meaning to a divergence analog to the KL-divergence in the channel coding theorem (see Section \ref{it-inter}), and which allows determining which communities can be recovered by solving a packing problem in the appropriate embedding, 
\item[(iii)] a quasi-linear time algorithm ({\tt Degree-profiling}) that solves exact recovery whenever it is information-theoretically solvable\footnote{Assuming that the entries of $Q_{ij}$ are non-zero --- see Remark \ref{qzero} for zero entries.}, showing in particular that there is no information-theoretic to computational gap for exact recovery with multiple communities, in contrast to the conjectures made for weak recovery. Note that the algorithm replicates statistically the performance of maximum-likelihood (which is NP-hard in the worst-case) with an optimal (i.e., quasi-linear) complexity. In particular, it improves significantly on the SDPs developed for two communities (see Section \ref{related_lit}) both in terms of generality and complexity.
\end{itemize}

\subsection{Definitions and terminologies}\label{def-term}
The general stochastic block model, $\text{SBM}(n,p,Q)$, is a random graph ensemble defined as follows:
\begin{itemize}
\item $n$ is the number of vertices in the graph, $V=[n]$ denotes the vertex set.
\item Each vertex $v \in V$ is assigned independently a hidden (or planted) label $\sigma_v$ in $[k]$ under a probability distribution $p=(p_1,\dots,p_k)$ on $[k]$. That is, $\pp\{\sigma_v=i\}=p_i$, $i \in[k]$. We also define $P=\diag(p)$.
\item Each (unordered) pair of nodes $(u,v) \in V\times V$ is connected independently with probability $Q_{\sigma_u,\sigma_v}$, where $Q_{\sigma_u,\sigma_v}$ is specified by a symmetric $k \times k$ matrix $Q$ with entries in $[0,1]$.
\end{itemize}
The above gives a distribution on $n$-vertices graphs. 
Note that $G\sim SBM(n,p,Q)$ denotes a random graph drawn under this model, without the hidden (or planted) clusters (i.e., the labels $\sigma_v$ ) revealed. The goal is to recover these labels by observing only the graph. 

This paper focuses on $p$ independent of $n$ (the communities have linear size), $Q$ dependent on $n$ such that the average node degrees are either constant or logarithmically growing and $k$ fixed. These assumptions on $p$ and $k$ could be relaxed, for example to slowly growing $k$, but we leave this for future work. As discussed in the introduction, the above regimes for $Q$ are both motivated by applications and by the fact that interesting mathematical phenomena take place in these regimes. For convenience, we attribute specific notations for the model in these regimes:
\begin{definition}
For a symmetric matrix $Q \in \mR_+^{k \times k}$, 
\begin{itemize}
\item $\gss(n,p,Q)$ denotes $\text{SBM}(n,p,Q/n)$,
\item $\gs(n,p,Q)$ denotes $\text{SBM}(n,p,\ln(n)Q/n)$.
\end{itemize}
\end{definition}

We now discuss the recovery requirements. 
\begin{definition} (Partial recovery.) 
An algorithm recovers or detects communities in $\text{SBM}(n,p,Q)$ with an accuracy of $\alpha \in [0,1]$, if it outputs a labelling of the nodes $\{\sigma'(v), v \in V\}$, which agrees with the true labelling $\sigma$ on a fraction $\alpha$ of the nodes with probability $1-o_n(1)$. The agreement is maximized over relabellings of the communities.
\end{definition}

\begin{definition} (Exact recovery.) 
Exact recovery is solvable in $\text{SBM}(n,p,Q)$ for a community partition $[k] = \sqcup_{s=1}^t A_s$, where $A_s$ is a subset of $[k]$, if there exists an algorithm that takes $G \sim SBM(n,p,Q)$ and assigns to each node in $G$ an element of $\{A_1,\dots,A_t\}$ that contains its true community\footnote{This is again up to relabellings of the communities.} with probability $1-o_n(1)$.  Exact recovery is solvable in $\text{SBM}(n,p,Q)$ if it is solvable for the partition of $[k]$ into $k$ singletons, i.e., all communities can be recovered. The problem is solvable information-theoretically if there exists an algorithm that solves it, and efficiently if the algorithm runs in polynomial-time in $n$. 
\end{definition}
Note that exact recovery for the partition $[k]=\{i\} \sqcup ([k] \setminus \{i\})$ is equivalent to extracting community $i$. In general, recovering a partition $[k] = \sqcup_{s=1}^t A_s$ is equivalent to merging the communities that are in a common subset $A_s$ and recovering the merged communities. 
Note also that exact recovery in $\text{SBM}(n,p,Q)$ requires the graph not to have vertices of degree $0$ in multiple communities (with high probability). In the symmetric case, this amounts to asking for connectivity. 
Therefore, for exact recovery, we will focus below on $Q$ scaling like $\frac{\ln(n)}{n}Q$ where $Q$ is a fixed matrix, i.e., on the $\gs(n,p,Q)$ model.

\subsection{Main results}\label{main-res}
We next present our main results and algorithms for partial and exact recovery in the general SBM. We present slightly simplified versions in this section, and provide full statements in Sections \ref{partial-sec} and \ref{exact-sec}. \\

\noindent
{\bf The CH-embedding and exact recovery.} We explain first how to identify the communities that can be extracted from a graph drawn under $\gs(n,p,Q)$.  Define first the community profile of community $i \in [k]$ by the vector 
\begin{align}
\theta_i:=(PQ)_i \in \mR_+^k,
\end{align} 
i.e., the $i$-th column of the matrix $\diag(p)Q$. Note that $\| \theta_i \|_1 \log(n)$ gives the average degree of a node in community $i$. Two communities having the same community profile cannot be distinguished, in that the random graph distribution is invariant under any permutation of the nodes in these communities. Intuitively, one would expect that the further ``apart'' the community profiles are, the easier it should be to distinguish the communities. The challenge is to quantify what ``apart'' means, and whether there exists a proper distance notion to rely on. We found that the following function gives the appropriate notion, 
\begin{align} 
\dd:\, \mR_+^k \times \mR_+^k &\,\, \to \,\, \mR_+ \notag\\
\phantom{\dd:} (\theta_i, \theta_j) &\,\, \mapsto \,\, \dd(\theta_i, \theta_j) =\max_{t \in [0,1]} \sum_{x \in [k]} \left( t\theta_i(x) + (1-t)\theta_j(x)- \theta_i(x)^t \theta_j(x)^{1-t} \right). \label{h-div1}
\end{align}
For a fixed $t$, the above is a so-called $f$-divergence (obtained for $f(x)=1-t+tx-x^t$), a family of divergences generalizing the KL-divergence (relative entropy) defined in \cite{csiszar-f,morimoto,ali} and used in information theory and statistics. As explained in Section \ref{it-inter}, $\dd$ can be viewed as a generalization of the Hellinger divergence (obtained for $t=1/2$) and the Chernoff divergence. We therefore call $\dd$ the Chernoff-Hellinger (CH) divergence.  Note that for the case of two symmetric communities, $\dd(\theta_1,\theta_2)=\frac{1}{2}(\sqrt{a}-\sqrt{b})^2$, recovering the result in \cite{abh,mossel-consist}.

To determine which communities can be recovered, partition the community profiles into the largest collection of disjoint subsets such that the CH-divergence among these subsets is at least 1 (where the $H$-divergence between two subsets of profiles is the minimum of the $H$-divergence between any two profiles in each subset). We refer to this as the {\it finest partition} of the communities. Figure \ref{finest-partition} illustrates this partition. The theorem below shows that this is indeed the most granular partition that can be recovered about the communities, in particular, it characterizes the information-theoretic and computational threshold for exact recovery. 

\begin{figure}[h]
\centering
  \includegraphics[width=.5\linewidth]{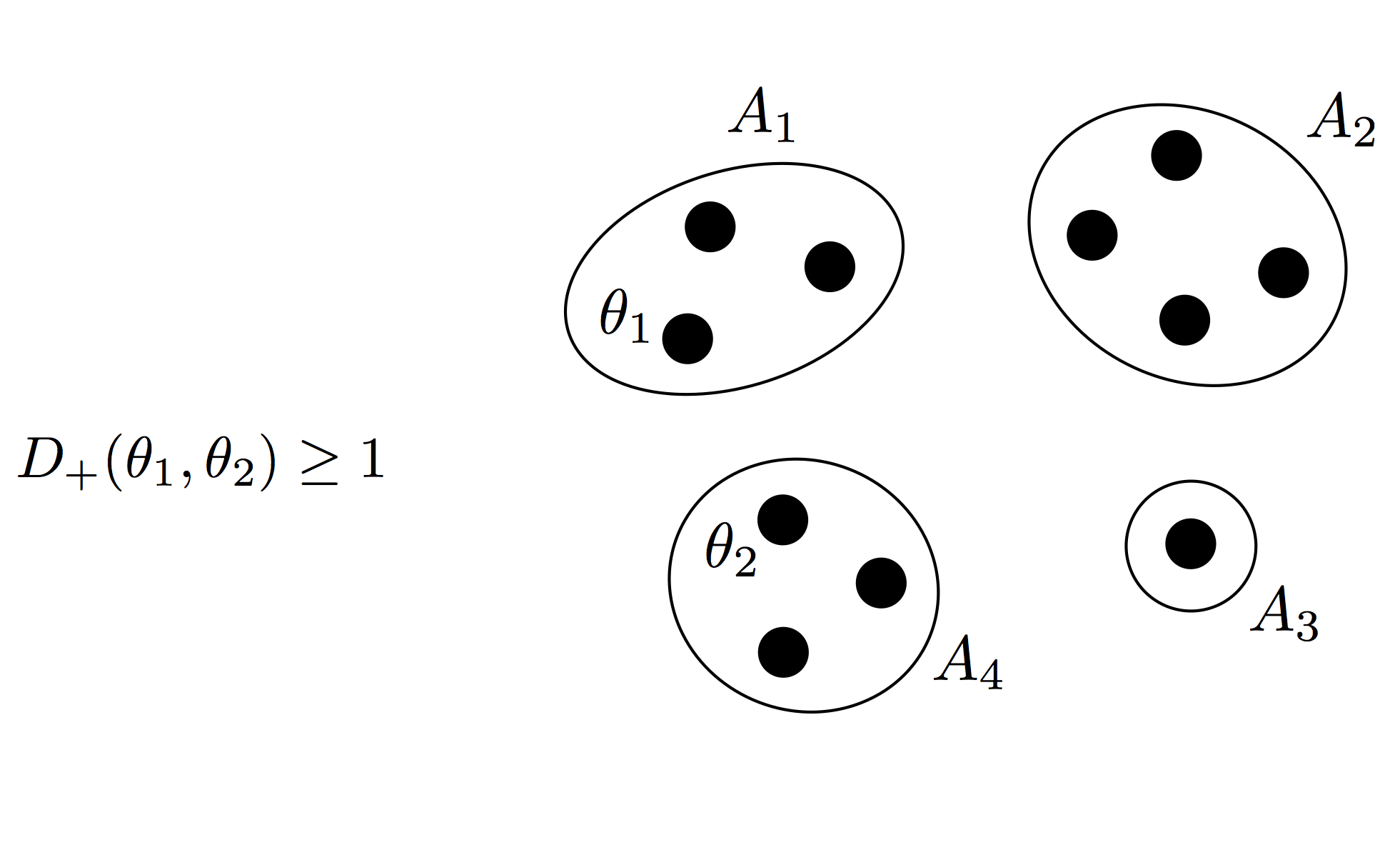}
  \caption{Finest partition: To determine which communities can be recovered in the SBM $\gs(n,p,Q)$, embed each community with its community profile $\theta_i=(PQ)_i$ in $\mR_+^k$ and find the partition of $\theta_1,\dots,\theta_k$ into the largest number of subsets that are at CH-divergence at least 1 from each other.}
  \label{finest-partition}
\end{figure}


\begin{theorem}(See Theorem \ref{thm2})
\begin{itemize}
\item Exact recovery is information-theoretically solvable in the stochastic block model $\gs(n,p,Q)$ for a partition $[k] = \sqcup_{s=1}^t A_s$ if and only if for all $i$ and $j$ in different subsets of the partition,
\begin{align}
\dd ((PQ)_i , (PQ)_j) \geq 1, \label{d1}
\end{align}
In particular, exact recovery is information-theoretically solvable in $\gs(n,p,Q)$ if and only if $\min_{i,j \in [k], i \neq j} \dd ((PQ)_i || (PQ)_j) \geq 1$.
\item The {\tt Degree-profiling} algorithm (see Section \ref{pt2}) recovers the finest partition with probability $1-o_n(1)$ and runs in $o(n^{1+\epsilon})$ time for all $\epsilon>0$. In particular, exact recovery is efficiently solvable whenever it is information-theoretically solvable. 
\end{itemize} 
\end{theorem}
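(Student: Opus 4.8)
As permitted by the footnote, assume all entries of $Q$ are strictly positive, so each community profile $\theta_i := (PQ)_i$ lies in the open positive orthant and all logarithms below are finite. It suffices to treat the \emph{finest partition}, since any partition $[k]=\sqcup_s A_s$ all of whose cross-block pairs satisfy \eqref{d1} is a coarsening of the finest one (each $A_s$ is a union of finest-partition blocks) and is obtained from it by merging; conversely, if some cross-block pair $i,j$ has $\dd(\theta_i,\theta_j)<1$ the argument below shows that partition is not recoverable. The whole proof revolves around one statistic: the \emph{degree profile} $d(v)=(d_1(v),\dots,d_k(v))$ of a vertex, where $d_x(v)$ counts the neighbours of $v$ in community $x$. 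Given $\sigma_v=i$ and the (concentrated) community sizes, $d(v)$ is, up to negligible corrections, a vector of independent Poisson variables with means $\ln(n)\,\theta_i$. The key analytic fact is a one-line Poisson computation: for $a,b>0$, $\sum_{m\ge 0}\big(e^{-a}a^m/m!\big)^{t}\big(e^{-b}b^m/m!\big)^{1-t}=e^{-(ta+(1-t)b-a^tb^{1-t})}$, so after scaling the means by $\ln n$ and multiplying over coordinates, the Chernoff bound on the event ``$d(v)$ is at least as likely under $\theta_j$ as under $\theta_i$'' equals $n^{-\max_{t\in[0,1]}\sum_x(t\theta_i(x)+(1-t)\theta_j(x)-\theta_i(x)^t\theta_j(x)^{1-t})}=n^{-\dd(\theta_i,\theta_j)}$; a matching lower bound on the Poisson tail makes this exponent sharp up to sub-polynomial factors. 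Thus $\dd$ is exactly the Chernoff information (per unit of $\ln n$) between the two product-Poisson profiles, and a single vertex of community $i$ is confusable with community $j$ with probability $n^{-\dd(\theta_i,\theta_j)+o(1)}$.

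\textbf{Impossibility.} Fix a cross-block pair $i,j$ with $\dd(\theta_i,\theta_j)<1$; it suffices to show that the maximum-a-posteriori estimator, which minimizes the probability of any recovery error, fails whp. Condition on the planted labelling $\sigma$ and let $C_i$ be the set of vertices labelled $i$. For $v\in C_i$ let $B_v$ be the event that switching $v$ alone to community $j$ strictly increases the posterior — equivalently (the prior ratio $p_j/p_i$ only shifting the exponent by $o(1)$) that $d(v)$ is strictly more likely under $\theta_j$ than under $\theta_i$. If some $B_v$ holds then $\sigma$ is not a posterior mode and MAP outputs a labelling $\ne\sigma$. By the estimate above $\pp[B_v\mid\sigma]=n^{-\dd(\theta_i,\theta_j)+o(1)}$, so $N:=\#\{v\in C_i:B_v\}$ has $\E[N\mid\sigma]=n^{1-\dd(\theta_i,\theta_j)+o(1)}\to\infty$. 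Since $B_v$ depends only on the edges incident to $v$, the events $B_u$ and $B_v$ become independent once one conditions on whether $uv$ is present, and a short second-moment computation gives $\Var[N\mid\sigma]=o(\E[N\mid\sigma]^2)$; Chebyshev then yields $\pp[N\ge 1\mid\sigma]\to1$ for a typical $\sigma$. Hence MAP — and therefore every algorithm — fails to recover the target partition whp. (The finitely many relabellings of the blocks do not help: if $\theta_i=\theta_j$ the communities are exchangeable and recovery is impossible outright, and if $\theta_i\ne\theta_j$ the $B_v$ events already break every relabelling.)

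\textbf{Achievability: the {\tt Degree-profiling} algorithm.} The algorithm runs in two phases. Phase~1 is a \emph{pre-classification}: run a consistent partial-recovery routine on $G$ — a spectral clustering of the adjacency matrix, or the {\tt Sphere-comparison} procedure of part~(i) — that whp mislabels only a vanishing fraction $\gamma_n=o_n(1)$ of the vertices, and read off consistent estimates $\hat\theta_i\to\theta_i$ and $\hat p\to p$. Phase~2 is the \emph{degree-profiling refinement}: for each vertex $v$ form its degree profile $\hat d(v)$ with respect to the Phase~1 labels of its neighbours, set $\hat\sigma(v)=\argmax_{i\in[k]}\big(\log\pp[\mathrm{Poi}(\ln(n)\hat\theta_i)=\hat d(v)]+\log\hat p_i\big)$, and return the block of the finest partition of $\{\hat\theta_i\}_i$ containing $\hat\sigma(v)$. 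For the analysis, the Phase~1 labels of $v$'s neighbours are, with high probability, essentially independent of $v$'s incident edges — either directly, because the spectral step is stable under the rank-$O(1)$ perturbation of deleting $v$, or after a leave-one-out device — so, conditionally, the number of $v$'s neighbours that Phase~1 mislabels is dominated by a $\mathrm{Bin}(\gamma_n n,\Theta(\ln n/n))$ variable and hence, by a union bound over all $n$ vertices, is $o(\ln n)$ uniformly. Combined with $\hat\theta_i\to\theta_i$, this perturbs every log-likelihood $\log\pp[\mathrm{Poi}(\ln(n)\hat\theta_i)=\hat d(v)]$ by only $o(\ln n)$. Therefore a vertex $v\in C_i$ is misclassified only if its \emph{ideal} log-likelihood margin against some community $j$ in a different block falls below $o(\ln n)$; by the governing estimate and continuity of the Poisson rate function this has probability $n^{-\dd(\theta_i,\theta_j)+o(1)}$, and a union bound over the $n$ vertices and the $O(k^2)$ pairs gives overall failure probability $n^{1-\min\dd(\theta_i,\theta_j)+o(1)}\to 0$, the minimum being over cross-block pairs, which by construction is $\ge1$. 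Complexity: Phase~1 is quasi-linear, and Phase~2 touches each edge $O(1)$ times and does $O(k)$ work per vertex, so the total is $o(n^{1+\e})$ for every $\e>0$.

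\textbf{Where the difficulty lies.} The routine parts are the Poisson large-deviation bookkeeping and the Phase~1 guarantees. The real work is the two-sided tightness at the threshold. On the impossibility side one must convert the per-vertex Chernoff computation into a genuine ``no algorithm succeeds'' statement, which is precisely the second-moment/weak-dependence control of $\{B_v\}$. On the achievability side the $\gamma_n n$ Phase~1 errors must be kept from corrupting the degree profiles by more than $o(\ln n)$ in log-likelihood while simultaneously not sacrificing any of the large-deviation rate — this forbids a naive edge split, since losing even a $(1-o(1))$ fraction of the edges multiplies the exponent by $1-o(1)$ and so is fatal precisely when $\dd=1$, and it is why one needs either local stability of Phase~1 or a carefully calibrated leave-one-out argument. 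Finally, at $\dd=1$ the bare Chernoff bound $n^{-1}$ is not summable against $n$ vertices, so the sharper Poisson tail asymptotics (carrying a $(\ln n)^{-\Theta(1)}$ prefactor) must be invoked, and $\gamma_n$ must be shown small enough — e.g.\ polynomially small — that the slack it induces does not erode that prefactor. This boundary analysis is the crux of the theorem.
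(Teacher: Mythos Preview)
Your proposal has the right skeleton and correctly identifies $\dd$ as the Poisson Chernoff exponent, but it diverges from the paper in both halves and each divergence contains a genuine gap.

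\textbf{Impossibility.} You argue: if some $B_v$ holds (swapping $v$ from $i$ to $j$ raises the posterior) then $\sigma$ is not the posterior mode, hence MAP---and therefore every algorithm---fails. The inference is not valid for \emph{partition} recovery. That $\sigma$ is not the $\sigma$-MAP output only tells you the $\sigma$-MAP outputs some $\sigma'\ne\sigma$; it does not tell you $\sigma'$ induces a different partition (it could move vertices only within a block, or move $v$ to some third community), nor does it control the partition-level MAP, which maximizes a \emph{sum} of posteriors over all labelings inducing a given partition. Your parenthetical about ``relabellings of the blocks'' addresses only the $t!$ permutations of block names, not this issue. The paper avoids this entirely: it holds out a set $S$ of $n/\ln^3 n$ vertices, reveals all labels outside $S$, fixes the single profile $x_\ell=\lfloor (PQ)_{\ell,i}^t(PQ)_{\ell,j}^{1-t}\ln n\rfloor$ at the optimizing $t$, and shows that whp there are $\ge\ln n$ vertices of community $i$ and $\ge\ln n$ of community $j$ in $S$ with exactly this profile to $[n]\setminus S$ and no neighbors in $S$. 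These $2\ln n$ vertices are then literally exchangeable given the data, so no algorithm can sort them with probability better than $1/\binom{2\ln n}{\ln n}$. This is a genuine information-theoretic indistinguishability argument, not a MAP-failure argument.

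\textbf{Achievability.} You assert that edge-splitting is ``forbidden'' because losing a $1-o(1)$ fraction of edges is ``fatal precisely when $\dd=1$''. This is exactly what the paper does, and it is not fatal. The paper splits with $\gamma=\tfrac{\Delta-1}{2\Delta}+\tfrac{\ln\ln n}{4\ln n}$ (so $\gamma\to 0$ at the threshold), runs {\tt Sphere-comparison} on $g'$, and does degree-profiling on the independent $g''$. The point you are missing is Lemma~\ref{hell-expo}: the Poisson overlap carries a $(\ln n)^{-1/2}$ prefactor, so the per-vertex error on $g''$ is $O(n^{-(1-\gamma)\Delta}(\ln n)^{-1/2})$, and with the stated $\gamma$ this is $O(n^{-1}(\ln n)^{-1/4})$ at $\Delta=1$---summable over $n$ vertices. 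Your alternative (no split, handle the dependence between Phase~1 labels and $v$'s edges by ``leave-one-out or local stability'') is not established: leave-one-out run literally costs $n$ calls to Phase~1 and destroys the quasi-linear runtime, and the stability claim for {\tt Sphere-comparison} is neither stated nor proved anywhere. Moreover the paper uses \emph{two} refinement rounds, not one: $\sigma'\to\sigma''\to$ final. This matters because at $\Delta=1$ the graph $g'$ has average degree $\Theta(\ln\ln n)$, so {\tt Sphere-comparison} only guarantees error $\delta=(\ln n)^{-c}$; plugging this into the robustness Lemma~\ref{testing-lemma2} gives a multiplicative blow-up $(1+c_1\delta)^{c_3\ln n}=\exp(\Theta((\ln n)^{1-c}))$, which swamps the $(\ln n)^{-1/4}$ slack. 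The first refinement round drives the error to $n^{-c'}$ (polynomially small), after which the second round achieves the full exponent. Your single-round scheme, even granting the unproved independence, would fail at the boundary for this reason.
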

This theorem assumes that the entries of $Q$ are non-zero, see Remark \ref{qzero} for zero entries. To achieve this result we rely on a two step procedure. First an algorithm is developed to recover all but a vanishing fraction of nodes --- this is the main focus of our partial recovery result next discussed --- and then a procedure is used to ``clean up'' the leftover graphs using the node degrees of the preliminary classification. This turns out to be much more efficient than aiming for an algorithm that directly achieves exact recovery. This strategy was already used in \cite{abh} for the two-community case, and appeared also in earlier works such as \cite{dyer,alon-k}. The problem is much more involved here as no algorithm is known to ensure partial recovery in the general SBM, and as classifying the nodes based on their degrees requires solving a general hypothesis testing problem for the degree-profiles in the SBM (rather than evaluating tail events of Binomial distributions). The latter part reveals the CH-divergence as the threshold for exact recovery. We next present our result for partial recovery.\\

\noindent
{\bf Partial recovery.} We obtain an algorithm that recovers the communities with an accuracy bound that tends to 1 when the average degree of the nodes gets large, and which runs in quasi-linear time. 

\begin{theorem}[See Theorem \ref{thm1}]
Given any $k\in \mathbb{Z}$, $p\in (0,1)^k$ with $|p|=1$, and symmetric matrix $Q$ with no two rows equal, let $\lambda$ be the largest eigenvalue of $PQ$, and $\lambda'$ be the eigenvalue of $PQ$ with the smallest nonzero magnitude.
If the following signal-to-noise ratio (SNR) $\rho$ satisfies
\begin{align}
\rho:=\frac{|\lambda'|^2}{\lambda}>4,\\
\lambda^7<(\lambda')^8,\\
4\lambda^3<(\lambda')^4,
\end{align} 
then for some $\e=\e(\lambda,\lambda')$ and $C=C(p,Q)>0$, the algorithm {\tt Sphere-comparison} (see Section \ref{pt1}) detects with high probability communities in graphs drawn from $\gss(n,p,Q)$ with accuracy  
\begin{align}
1- \frac{4ke^{-\frac{C \rho}{16k}}}{1-e^{-\frac{C\rho}{16k}\left(\frac{(\lambda')^4}{\lambda^3}-1\right)}} ,
\end{align}
provided that the above is larger than $1-\frac{\min_i p_i}{2\ln(4k)}$, and runs in $O(n^{1+\epsilon})$ time. Moreover, $\e$ can be made arbitrarily small with $8\ln (\lambda\sqrt{2}/|\lambda'|)/\ln(\lambda)$, and $C(p,\alpha Q)$ is independent of $\alpha$.
\end{theorem}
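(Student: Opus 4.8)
The plan is to analyze \texttt{Sphere-comparison} in four stages: (a) pin down the local branching-process geometry of $\gss(n,p,Q)$; (b) compute the mean of the sphere-comparison statistic and isolate the community-discriminating ``signal''; (c) bound the statistic's fluctuations so that the signal dominates; and (d) turn per-vertex estimates into a labelling, account for the errors, and bound the runtime. For (a): since edge probabilities are $\Theta(1/n)$, for radii $r\le c\log n/\log\lambda$ with $c$ small the ball of radius $r$ about a fixed vertex $v$ is, with probability $1-n^{-\Omega(1)}$, a tree and can be coupled to a multi-type Galton--Watson tree with root type $\sigma_v$ and mean offspring matrix $QP$, which is similar to $PQ$ and hence has largest eigenvalue $\lambda$ and smallest-magnitude nonzero eigenvalue $\lambda'$. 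First- and second-moment estimates on this branching process then give that the community-census vector $N_r(v)\in\mN^k$ of the distance-$r$ sphere obeys $N_r(v)=\lambda^r\big(c\,u+O((|\lambda'|/\lambda)^r)\,w_{\sigma_v}\big)+(\text{small error})$, with $u$ the Perron eigenvector of $QP$ and $w_{\sigma_v}$ the projection of $e_{\sigma_v}$ onto the non-Perron eigenspaces; in particular $|N_r(v)|$ concentrates around $c_{\sigma_v}\lambda^r$. A union bound over vertices with atypical (cyclic or anomalously sized) neighbourhoods controls the exceptional set, and the probability that $v$'s neighbourhood first becomes atypical at level $\ell$ decays geometrically in $\ell$ — this is the origin of the geometric factor in the accuracy bound.

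For (b): given $v_1,v_2$ and radii $r_1,r_2$, the algorithm forms a normalised count of adjacent pairs $(x,y)$ with $x$ at distance $r_1$ from $v_1$ and $y$ at distance $r_2$ from $v_2$, i.e.\ of length-$(r_1{+}r_2{+}1)$ walks joining $v_1$ to $v_2$. Revealing the graph in BFS order, its conditional mean is $\tfrac1n N_{r_1}(v_1)^{\!\top}Q\,N_{r_2}(v_2)\,(1+o(1))$, which by (a) equals $\tfrac1n\lambda^{r_1+r_2}$ times a bilinear form that, in the eigenbasis $\{\phi_\ell\}$ of the symmetric matrix $P^{1/2}QP^{1/2}$, takes the form $c^2+\sum_{\ell\ge 2}(\lambda_\ell/\lambda)^{r_1+r_2}\langle\psi_{\sigma_{v_1}},\phi_\ell\rangle\langle\phi_\ell,\psi_{\sigma_{v_2}}\rangle$ up to a positive factor, with $\psi_i:=P^{-1/2}e_i$. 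The community-independent term $c^2$ is removed by normalising with the sphere sizes (or by subtracting a reference count), leaving a leading signal of size $(|\lambda'|/\lambda)^{r_1+r_2}$; since no two rows of $Q$ are equal the community profiles $\theta_i=(PQ)_i$ are distinct, so once $r_1{+}r_2$ is large enough this signal separates the $k$ communities by a positive margin. Assembling the pairwise statistics over a pool of vertices into a matrix exhibits the structure ``rank-one Perron spike $+$ rank-$(\le k{-}1)$ signal $+$ noise''; projecting off the spike and reading the dominant eigenvectors (equivalently, thresholding the normalised counts) estimates each pooled vertex's community up to relabelling.

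Stage (c) is the heart of the argument and the main obstacle: showing the count concentrates around the mean of (b) with deviation below the signal. This is delicate in the constant-degree regime because the spheres $N_{r_i}(v_i)$ are random sets carved out by the same sparse graph whose edges are then counted between them, and the heavy-tailed degrees let a single vertex distort a count. Writing the count as a sum over length-$(r_1{+}r_2{+}1)$ walks and computing its variance reduces matters to bounding sums over pairs of such walks grouped by overlap pattern; conditioning layer-by-layer along the BFS handles the dependencies, and the resulting walk-overlap sums are dominated by the ``signal$^2$'' term precisely when $\rho=|\lambda'|^2/\lambda>4$, $\lambda^7<(\lambda')^8$, and $4\lambda^3<(\lambda')^4$ — exactly the theorem's hypotheses, which is how those three inequalities arise. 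Combining this with the branching-process large deviations of (a), a given vertex is misclassified with probability at most a constant times $e^{-C\rho/(16k)}$ at the base level, the contribution of atypicality at level $\ell$ forming a geometric series of ratio $e^{-(C\rho/16k)((\lambda')^4/\lambda^3-1)}$; summing over the $k$ communities and the levels yields the accuracy $1-\dfrac{4k\,e^{-C\rho/(16k)}}{1-e^{-(C\rho/16k)((\lambda')^4/\lambda^3-1)}}$. Requiring this to exceed $1-\tfrac{\min_i p_i}{2\ln(4k)}$ guarantees that each true community stays a strict majority inside its predicted cluster, so the final relabelling is unambiguous and no two communities get merged.

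For (d): take the radius to be the least multiple of $\log n$ for which the signal condition of (c) holds; then each ball has size $\lambda^r=n^{\e}$ with $\e$ determined by $\log\lambda$ over the signal-growth rate, which one checks is at most $8\ln(\lambda\sqrt2/|\lambda'|)/\ln\lambda$, and building these balls around $O(n)$ vertices (or around a subsample, then propagating labels by majority vote of sphere statistics) costs $O(n^{1+\e})$. Under the rescaling $Q\mapsto\alpha Q$ one has $\lambda\mapsto\alpha\lambda$ and $\lambda'\mapsto\alpha\lambda'$, so $\lambda\sqrt2/|\lambda'|$ is unchanged while $\ln\lambda\to\infty$ as $\alpha\to\infty$, forcing $\e\to0$; and since $C(p,\alpha Q)$ is independent of $\alpha$ while $\rho$ scales linearly in $\alpha$ and hence diverges, the accuracy bound tends to $1$. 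Everything outside stage (c) is either a standard branching-process/coupling estimate or bookkeeping; the real work is the sparse-regime variance bound certifying that the signal beats the noise under the stated spectral conditions.
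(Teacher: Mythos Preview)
Your outline misses the central algorithmic and analytic device of the paper: edge-splitting. The paper does \emph{not} count edges between $N_{r_1}(v_1)$ and $N_{r_2}(v_2)$ in the same graph and then fight the resulting dependence via a walk-overlap variance calculation, as you propose in stage~(c). In fact the paper explicitly notes that membership in $N_r(v)$ and in $N_{r'}(v')$ are correlated badly enough to spoil the naive statistic. Instead, each edge is independently placed in a set $E$ with probability $c$; the spheres $N_{r[G\setminus E]}(v)$ and $N_{r'[G\setminus E]}(v')$ are grown in $G\setminus E$, and then one counts edges of $E$ between them. Because $E$ and $G\setminus E$ are essentially independent, the concentration of $N_{r,r'[E]}(v\cdot v')$ around $N_{r[G\setminus E]}(v)\cdot \tfrac{cQ}{n}N_{r'[G\setminus E]}(v')$ is immediate (a few lines, Lemma~5), and no delicate overlap combinatorics are needed. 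Your stage~(c), which you rightly flag as the ``heart of the argument,'' is thus replaced by a trick rather than a hard estimate; what you sketch might be salvageable along the lines of the non-backtracking/self-avoiding walk analyses in \cite{massoulie-STOC,Mossel_SBM2}, but it is not this paper's proof and you have not actually carried it out.

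Two further structural differences. First, the paper does not ``project off the Perron spike and read eigenvectors.'' It computes $N_{r+j,r'[E]}(v\cdot v')$ for $j=0,\dots,\eta-1$ and solves a Vandermonde-type linear system in the powers $((1-c)\lambda_i)^{r+r'+j+1}$ to extract $z_i\approx P_{W_i}e_{\sigma_v}\cdot P^{-1}P_{W_i}e_{\sigma_{v'}}$ directly (the Decomposition Equation Lemma); vertices are then compared via $z_i(v\cdot v)-2z_i(v\cdot v')+z_i(v'\cdot v')$. Second, the accuracy formula does not come from ``atypicality at level $\ell$ forming a geometric series'' in a branching-process coupling. It comes from the notion of an $(R,x)$-good vertex (one whose sphere-census vectors track $(PQ)^r e_{\sigma_v}$ to prescribed tolerance), a Bernstein-type bound giving that a vertex is $(R,x)$-bad with probability at most $\sum_{r\ge 0}2k\exp\big(-\tfrac{x^2\lambda_\eta^2\min p_i}{16\lambda_1 k^{3/2}((\min p_i)^{-1/2}+x)}(\lambda_\eta^4/4\lambda_1^3)^r\big)$, and then bounding this by a geometric series. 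The condition $4\lambda^3<(\lambda')^4$ is exactly what makes that series converge; the condition $\lambda^7<(\lambda')^8$ and the bound on $\epsilon$ come from the feasibility window for the radii $r,r'$ in the Decomposition Equation Lemma (one needs $r+r'$ large enough that $((1-c)\lambda_\eta)^{r+r'}\gg n$ yet $r,r'$ small enough that the spheres do not saturate). Your attribution of all three inequalities to a single variance calculation is not how they arise here.
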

We next detail what previous theorem gives in the case of $k$ symmetric clusters.  
\begin{corollary}
Consider the $k$-block symmetric case. In other words, $p_i=\frac{1}{k}$ for all $i$, and $Q_{i,j}$ is $\alpha$ if $i=j$ and $\beta$ otherwise. The vector whose entries are all $1$s is an eigenvector of $PQ$ with eigenvalue $\frac{\alpha+(k-1)\beta}{k}$, and every vector whose entries add up to $0$ is an eigenvector of $PQ$ with eigenvalue $\frac{\alpha-\beta}{k}$. So, $\lambda=\frac{\alpha+(k-1)\beta}{k}$ and $\lambda'=\frac{\alpha-\beta}{k}$
and 
\begin{align}
\rho > 4 \quad \Leftrightarrow \quad \frac{(a-b)^2}{k(a+(k-1)b)} >4,
\end{align}
which is the signal-to-noise ratio appearing in the conjectures on the detection threshold for multiple blocks \cite{decelle,mossel-sbm}. 
Then, as long as $k(\alpha+(k-1)\beta)^7<(\alpha-\beta)^8$ and $4k(\alpha+(k-1)\beta)^3<(\alpha-\beta)^4$, 
there exist a constant $c>0$ (see Corollary \ref{full-coro} for details on $c$) such that {\tt Sphere-comparison} detects communities, and the accuracy is $$1-O(e^{-c(\alpha-\beta)^2/(k(\alpha+(k-1)\beta)}))$$ for sufficiently large $((\alpha-\beta)^2/(k(\alpha+(k-1)\beta)))$. 
\end{corollary}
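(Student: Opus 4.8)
The plan is to obtain the corollary as a direct specialization of Theorem~\ref{thm1} (the general \texttt{Sphere-comparison} guarantee) to the symmetric model; the only real content is the spectral computation and careful bookkeeping of the constants. First I would compute the spectrum of $PQ$. With $p_i=1/k$ we have $P=\frac1k I$, so $PQ=\frac1k Q$, and since $Q=(\alpha-\beta)I+\beta J$ with $J$ the all-ones matrix, the all-ones vector $\mathbf 1$ is an eigenvector of $PQ$ with eigenvalue $\frac{\alpha+(k-1)\beta}{k}$, while the $(k-1)$-dimensional subspace $\{x:\sum_i x_i=0\}$ is an eigenspace with eigenvalue $\frac{\alpha-\beta}{k}$; these exhaust the spectrum. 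Under the hypothesis $\alpha\neq\beta$ — which is exactly the requirement that $Q$ have no two equal rows — both eigenvalues are nonzero, and (in the assortative regime, or more generally whenever $|\alpha-\beta|\le\alpha+(k-1)\beta$) the eigenvalue of smallest nonzero magnitude is $\lambda'=\frac{\alpha-\beta}{k}$ while the largest is $\lambda=\frac{\alpha+(k-1)\beta}{k}$; the complementary case merely swaps the roles $\lambda\leftrightarrow\lambda'$ and does not affect the argument.

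Next I would verify that the three hypotheses of Theorem~\ref{thm1} translate into the stated conditions. Substituting the eigenvalues gives $\rho=|\lambda'|^2/\lambda=\frac{(\alpha-\beta)^2}{k(\alpha+(k-1)\beta)}$, so $\rho>4$ is precisely the claimed SNR condition, which coincides with the conjectural detection threshold for $k$ symmetric blocks discussed above. Clearing denominators, $\lambda^7<(\lambda')^8$ becomes $k(\alpha+(k-1)\beta)^7<(\alpha-\beta)^8$, and $4\lambda^3<(\lambda')^4$ becomes $4k(\alpha+(k-1)\beta)^3<(\alpha-\beta)^4$. Hence, under the corollary's hypotheses, Theorem~\ref{thm1} applies: \texttt{Sphere-comparison} runs in $O(n^{1+\e})$ time (with $\e$ controlled as in the theorem) and detects communities in $\gss(n,p,Q)$ with accuracy $1-\frac{4k e^{-C\rho/(16k)}}{1-\exp\!\big(-\frac{C\rho}{16k}((\lambda')^4/\lambda^3-1)\big)}$.

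Finally I would simplify this expression. Since $4\lambda^3<(\lambda')^4$ forces $(\lambda')^4/\lambda^3-1>3>0$, as $\rho\to\infty$ the denominator tends to $1$ while the numerator is $O(e^{-C\rho/(16k)})$, so the accuracy is $1-O(e^{-C\rho/(16k)})$. Writing $c=C(p,Q)/(16k)$ and recalling $\rho=(\alpha-\beta)^2/(k(\alpha+(k-1)\beta))$ yields the stated bound $1-O\big(e^{-c(\alpha-\beta)^2/(k(\alpha+(k-1)\beta))}\big)$ for sufficiently large $\rho$, with the precise dependence of $c$ referred to Corollary~\ref{full-coro}.

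There is no substantial obstacle here beyond bookkeeping; the one point worth care is the behavior of the constant. Because Theorem~\ref{thm1} guarantees that $C(p,\alpha Q)$ is independent of the scaling factor $\alpha$, the constant $c$ in the corollary depends only on $k$ and on the normalized parameters — equivalently on the ratio $\alpha/\beta$ — so it may legitimately be treated as absolute once that ratio is fixed; making this scale-invariance explicit, and checking that the rescaled inequalities above are stated correctly, is the only nontrivial verification.
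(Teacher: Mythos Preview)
Your proposal is correct and takes essentially the same approach as the paper: this corollary is simply an immediate specialization of the general \texttt{Sphere-comparison} theorem to the symmetric model, and all that is needed is the eigenvalue computation for $PQ=\tfrac{1}{k}Q=(\alpha-\beta)I/k+\beta J/k$ together with substitution into the hypotheses and the accuracy bound. The paper does not give a separate proof here (deferring the constant bookkeeping to Corollary~\ref{full-coro}), and your write-up matches that specialization step for step.
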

The following is an important consequence of previous theorem, as it shows that {\tt Sphere-comparison} achieves almost exact recovery when the entires of $Q$ are amplified. 
\begin{corollary}
For any $k\in \mathbb{Z}$, $p\in (0,1)^a$ with $|p|=1$, and symmetric  matrix $Q$ with no two rows equal, there exist $\epsilon(\delta)=O(1/\ln(\delta))$ and constant $c_1>0$ such that for all sufficiently large $\delta$ there exists an algorithm ({\tt Sphere-comparison}) that detects communities in graphs drawn from $G(p,\delta Q,n)$ with accuracy at least $1-O_\delta(e^{-c_1\delta})$ in $O_n(n^{1+\epsilon(\delta)})$ time for all sufficiently large $n$.
\end{corollary}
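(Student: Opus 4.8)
The plan is to derive this corollary directly from Theorem \ref{thm1} by applying that theorem with $Q$ replaced by $\delta Q$ and then tracking how every quantity scales with $\delta$. The crucial input is homogeneity of the spectrum: if $\lambda$ denotes the largest eigenvalue of $PQ$ and $\lambda'$ the eigenvalue of smallest nonzero magnitude, then $P(\delta Q)=\delta PQ$ has largest eigenvalue $\delta\lambda$ and smallest-nonzero-magnitude eigenvalue $\delta\lambda'$. Hence the SNR of $\delta Q$ is $|\delta\lambda'|^2/(\delta\lambda)=\delta\rho$ with $\rho:=|\lambda'|^2/\lambda>0$, and the three hypotheses of Theorem \ref{thm1} applied to $\delta Q$ become $\delta\rho>4$, $(\delta\lambda)^7<(\delta\lambda')^8$, and $4(\delta\lambda)^3<(\delta\lambda')^4$, i.e.\ $\delta\rho>4$, $\lambda^7<\delta(\lambda')^8$, and $4\lambda^3<\delta(\lambda')^4$. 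Since $\lambda'\neq 0$ (so $(\lambda')^8,(\lambda')^4>0$), all three hold once $\delta$ exceeds a constant depending only on $p$ and $Q$; moreover ``no two rows of $Q$ equal'' is inherited by $\delta Q$, so Theorem \ref{thm1} applies.

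Next I would read off the accuracy bound. For the matrix $\delta Q$ the error term of Theorem \ref{thm1} is
\[
\frac{4k\,e^{-C(p,\delta Q)\,\delta\rho/(16k)}}{1 - \exp\!\bigl(-\tfrac{C(p,\delta Q)\,\delta\rho}{16k}\bigl(\delta(\lambda')^4/\lambda^3 - 1\bigr)\bigr)},
\]
where I have used that the quantity $(\lambda')^4/\lambda^3$ for $\delta Q$ equals $(\delta\lambda')^4/(\delta\lambda)^3=\delta(\lambda')^4/\lambda^3$. By the last sentence of Theorem \ref{thm1}, $C(p,\alpha Q)$ is independent of $\alpha$, so $C(p,\delta Q)=C(p,Q)=:C_0>0$. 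Since $(\lambda')^4/\lambda^3>0$ (indeed $\lambda>0$ by Perron--Frobenius and $\lambda'\neq 0$), the exponent in the denominator tends to $-\infty$ as $\delta\to\infty$, so the denominator is at least $1/2$ for all large $\delta$ and the error term is at most $8k\,e^{-C_0\rho\delta/(16k)}$. Thus the accuracy is at least $1-8k\,e^{-c_1\delta}$ with $c_1:=C_0\rho/(16k)=C_0|\lambda'|^2/(16k\lambda)>0$, a constant depending only on $p,Q,k$; in particular it is $1-O_\delta(e^{-c_1\delta})$. Because this bound tends to $1$, it eventually exceeds the fixed threshold $1-\min_i p_i/(2\ln(4k))$ required in Theorem \ref{thm1}, so that side condition is also satisfied for all large $\delta$.

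For the running time, Theorem \ref{thm1} gives complexity $O(n^{1+\e})$ with $\e$ made arbitrarily small in terms of $8\ln(\lambda\sqrt2/|\lambda'|)/\ln(\lambda)$; evaluated at $\delta Q$ this quantity is $8\ln(\delta\lambda\sqrt2/(\delta|\lambda'|))/\ln(\delta\lambda)=8\ln(\lambda\sqrt2/|\lambda'|)/\ln(\delta\lambda)$, which is $O(1/\ln\delta)$ since $\ln(\delta\lambda)=\ln\delta+\ln\lambda$ and $\lambda\sqrt2/|\lambda'|\geq\sqrt2$. Taking $\epsilon(\delta)$ to be (a suitable constant multiple of) this quantity yields $\epsilon(\delta)=O(1/\ln\delta)$ and complexity $O_n(n^{1+\epsilon(\delta)})$. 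Finally one records the order of quantifiers, as in the statement: $\delta$ is fixed large first (large enough for the three SNR inequalities, the threshold inequality, and the denominator estimate), and only then is $n$ taken large --- large enough that $\delta Q/n$ has entries in $[0,1]$, so that $G(p,\delta Q,n)=\mathrm{SBM}(n,p,\delta Q/n)$ is well defined, and large enough for the high-probability conclusion of Theorem \ref{thm1}.

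The only point requiring care --- rather than a genuine obstacle --- is the homogeneity input $C(p,\alpha Q)=C(p,Q)$: without it the numerator $4k\,e^{-C(p,\delta Q)\delta\rho/(16k)}$ could a priori decay at a $\delta$-dependent rate (or not decay at all), and the clean $e^{-c_1\delta}$ bound with $\delta$-independent $c_1$ would be lost. Since this homogeneity is asserted in the statement of Theorem \ref{thm1}, the corollary reduces to the scaling bookkeeping above; everything else --- the denominator tending to $1$, the threshold condition, and the vanishing of $\epsilon(\delta)$ --- is immediate from the fact that the eigenvalues of $P(\delta Q)$ are $\delta$ times those of $PQ$.
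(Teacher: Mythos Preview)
Your proposal is correct and follows exactly the approach the paper indicates: the paper's own justification is the single sentence ``Considering the way $\epsilon$, $x$, and $y$ scale when $Q$ is multiplied by a scalar yields the following corollary,'' and you have carried out precisely that scaling bookkeeping in detail, correctly using the homogeneity $C(p,\alpha Q)=C(p,Q)$ stated in the theorem to secure the $\delta$-independent constant $c_1$.
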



\subsection{Information theoretic interpretation of the results}\label{it-inter}
We give in this section an interpretation of Theorem \ref{thm2} related to Shannon's channel coding theorem in information theory. At a high level, clustering the SBM is similar to reliably decoding a codeword on a channel which is non-conventional in information theory. The channel inputs are the nodes' community assignments and the channel outputs are the network edges. 
We next show that this analogy is more than just high-level: reliable communication on this channel is equivalent to exact recovery, and Theorem \ref{thm2} shows that the ``clustering capacity'' is obtained from the CH-divergence of channel-kernel $PQ$, which is an $f$-divergence like the KL-divergence governing the communication capacity.

Consider the problem of transmitting a string of $n$ $k$-ary information bits on a memoryless channel. Namely, let $X_1,\dots,X_n$ be i.i.d.\ from a distribution $p$ on $[k]$, the input distribution, and assume that we want to transmit those $k$-ary bits on a memoryless channel, whose one-time probability transition is $W$. This requires using a code, which embeds\footnote{This embedding is injective.} the vector $X^n=(X_1,\dots,X_n)$ into a larger dimension vector $U^N=(U_1,\dots,U_N)$, the codeword ($N \geq n$), such that the corrupted version of $U^N$ that the memoryless channel produces, say $Y^N$, still allows recovery of the original $U^N$ (hence $X^n$) with high probability on the channel corruptions. In other words, a code design provides the map $C$ from $X^n$ to $U^N$ (see Figure \ref{fig-c}), and a decoding map that allows recovery of $X^n$ from $Y^N$ with a vanishing error probability (i.e., reliable communication). 

Of course, if $n=N$, the encoder $C$ is just a one-to-one map, and there is no hope of defeating the corruptions of the channel $W$, unless this one is deterministic to start with. The purpose of the channel coding theorem is to quantify the best tradeoffs between $n$, $N$ and the amount of randomness in $W$, for which one can reliably communicate. When the channel is fixed and memoryless, $N$ can grow linearly with $n$, and defining the code rate by $R=n/N$, Shannon's coding theorem tells us that $R$ is achievable (i.e., there exists an encoder and decoder that allow for reliable communication at that rate) {\it if and only if}
\begin{align}
R < \max_p I(p,W),
\end{align}
where $I(p,W)$ is the mutual information of the channel $W$ for the input distribution $p$, defined as 
\begin{align}
I(p,W) = D(p \circ W||p \times pW) = \sum_{x,y} p(x) W(y|x)  \log \frac{p(x) W(y|x)}{p(x) \sum_u p(u)W(y|u)}.
\end{align}
Note that the channel capacity $\max_p I(p,W)$ is expressed in terms of the the Kullback-Leibler divergence (relative entropy) between the joint and product distribution of the channel. 

\begin{figure}[h]
\centering
\begin{subfigure}{.5\textwidth}
  \centering
  \includegraphics[width=1\linewidth]{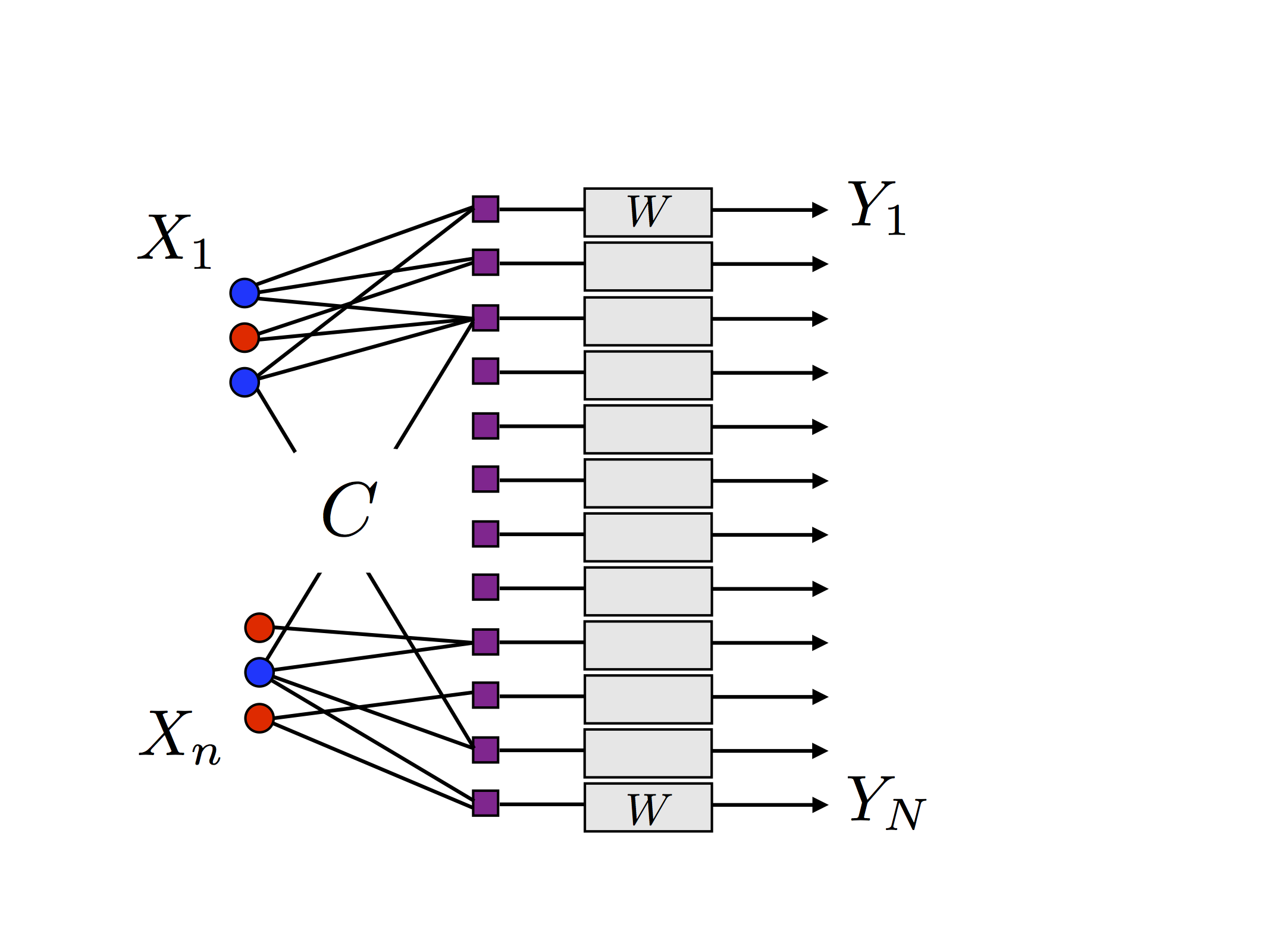}
  \caption{An encoder $C$ for data transmission.}
  \label{fig-c}
\end{subfigure}%
\begin{subfigure}{.5\textwidth}
  \centering
  \includegraphics[width=1\linewidth]{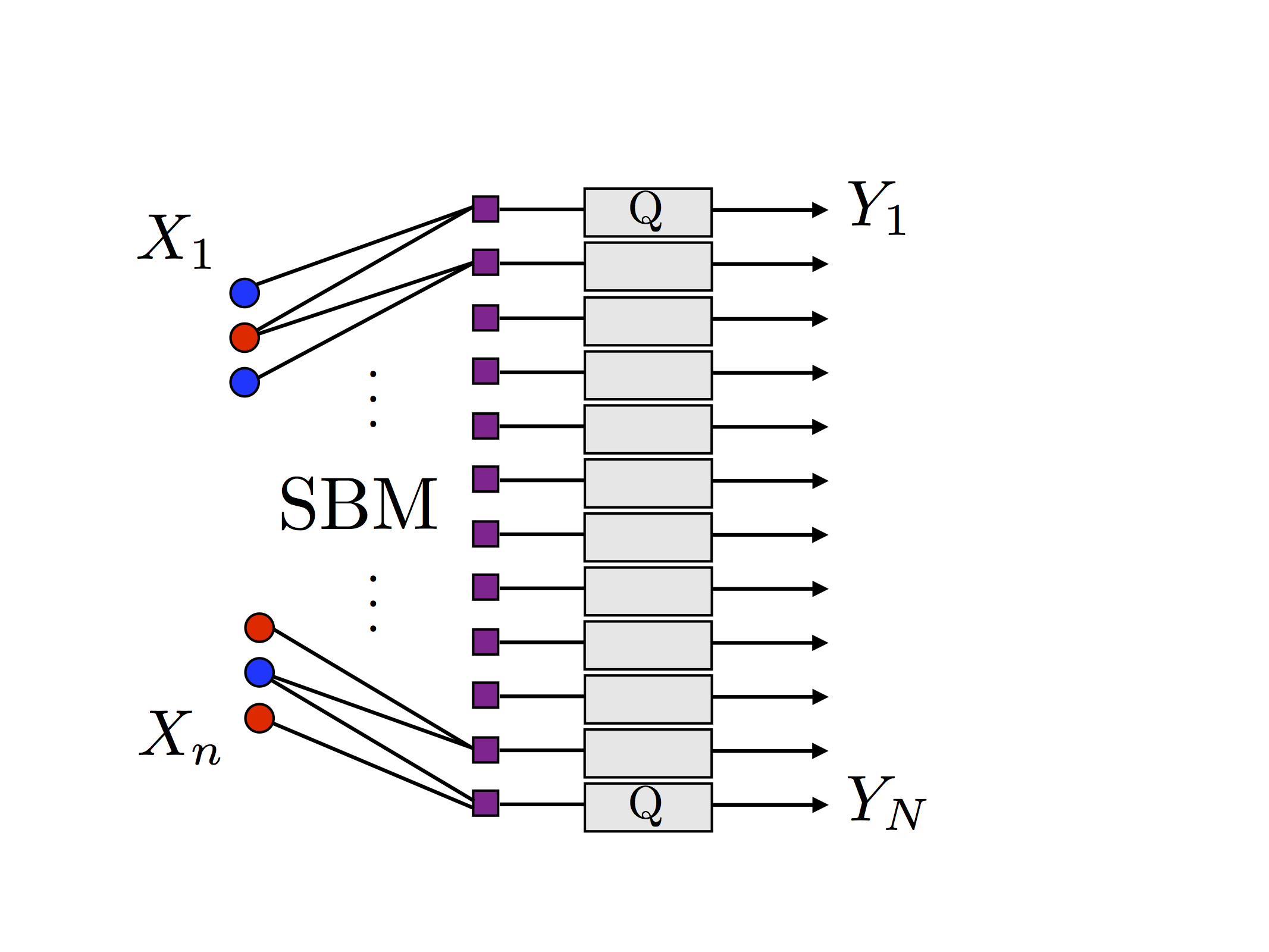}
  \caption{The SBM encoder for network modelling.}
  \label{fig-sbm}
\end{subfigure}
\caption{Clustering over the SBM can be related to channel coding over a discrete memoryless channel, for a different type of encoder and one-time channel.}
\label{double-fig}
\end{figure}

We now explain how this relates to our Theorem \ref{thm2}. Clustering the SBM can be cast as a decoding problem on a channel similar to the above. The $n$ $k$-ary information bits $X^n$ represent the community assignments to the $n$ nodes in the network. As for channel coding, these are assumed to be i.i.d.\ under some prior distribution $p$ on $[k]$.
However, clustering has several important distinctions with coding. First of all, we do not have degree of freedom on the encoder $C$. The encoder is part of the model, and in the SBM $C$ takes all possible ${n \choose 2}$ pair of information bits. In other words, the SBM corresponds to a specific encoder which has only degree 2 on the check-nodes (the squared nodes in Figure \ref{fig-sbm}) and for which $N={n \choose 2}$. Next, as in channel coding, the SBM assumes that the codeword is corrupted from a memoryless channel, which takes the two selected $k$-ary bits and maps them to an edge variable (presence or absence of edge) with a channel $W$ defined by the connectivity matrix:
\begin{align} 
W(1|x_1,x_2)&= q_{x_1,x_2},\\
W(0|x_1,x_2)&= 1-q_{x_1,x_2},
\end{align}
where $q$ scales with $n$ here. Hence, the SBM can be viewed as a specific encoder on a memoryless channel defined by the connectivity matrix $q$. We removed half of the degrees of freedom from channel coding (i.e., the encoder and $p$ are fixed), but the goal of clustering is otherwise similar to channel coding: design a decoding map that recovers the information $k$-ary bits $X^n$ from the network $Y^N$ with a vanishing error probability. In particular, exact recovery is equivalent to reliable communication.

A naive guess would be that some mutual information derived from the input distribution $p$ and the channel induced from $q$ could give the fundamental tradeoffs, as for channel coding. However, this is where the difference between coding and clustering is important. An encoder that achieves capacity in the traditional setting is typically ``well spread,'' for example, like a random code which picks each edge in the bipartite graph of Figure \ref{fig-c} with probability one half. The SBM encoder, instead, is structured in a very special way, which may not be well suited for communication purposes\footnote{It corresponds for example to a 2-right degree LDGM code in the case of the symmetric two-community SBM, a code typically not used for communication purposes.}. This makes of course sense as the formation of a real network should have nothing to do with the design of an engineering system. Note also that the code rate in the SBM channel is fixed to $R=\frac{n}{{n \choose 2}} \sim \frac{2}{n}$, 
which means that there is hope to still decode such a ``poor'' code, even on a very noisy channel.  

Theorem \ref{thm2} shows that indeed a similar phenomenon to channel coding takes place for clustering. Namely, there exists a notion of ``capacity,'' governed not by KL-divergence but the CH-divergence introduced in Section \ref{main-res}. The resulting capacity captures if reliable communication is possible or not. The relevant regime is for $q$ that scales like $\ln(n)Q/n$, and the theorem says that it is possible to decode the inputs (i.e., to recover the communities) if and only if 
\begin{align}
1 \leq J(p,Q),
\end{align} 
where 
\begin{align}
J(p,Q)= \min_{i\neq j} \dd((pQ)_i,(pQ)_j).
\end{align} 
Note again the difference with the channel coding theorem: here we cannot optimize over $p$ (since the community sizes are not a design parameter), and the rate $R$ is fixed. One could change the latter requirement, defining a model where the information about the edges is only revealed at a given rate, in which case the analogy with Shannon's theorem can be made even stronger (see for example \cite{abbs}.) 

The conclusiong is that we can characterize the fundamental limit for clustering, with a sharp transition governed by a measure of the channel ``noisiness,'' that is related to the KL-divergence used in the channel coding theorem. This is due to the hypothesis testing procedures underneath both frameworks (see Section \ref{testing}). 
Defining
\begin{align} 
D_t(\mu,\nu):= \sum_{x \in [k]} \left(  t\mu(x) + (1-t)\nu(x)- \mu(x)^t \nu(x)^{1-t}  \right) 
\end{align}
we have that 
\begin{itemize}
\item $D_t$ is an $f$-divergence, that is, it can be expressed as $\sum_x \nu(x) f(\mu(x)/\nu(x))$, where $f(x)=1-t+tx-x^t$, which is convex. The family of $f$-divergences were defined in \cite{csiszar-f,morimoto,ali} and shown to have various common properties when $f$ is convex. Note that the KL-divergence is also an $f$-divergence for the convex function $f(x)=x\ln(x)$,
\item $D_{1/2}(\mu,\nu)=\frac{1}{2} \| \sqrt{\mu} - \sqrt{\nu} \|_2^2$ is the Hellinger divergence (or distance), in particular, this is the maximizer for the case of two symmetric communities, recovering the expression $\frac{1}{2}(\sqrt{a}-\sqrt{b})^2$ obtained in \cite{abh,mossel-consist},
\item $D_t(\mu,\nu)=t \bar{\mu} - (1-t) \bar{\nu} - e^{-D_t(\mu||\nu)}$, where $D_t(\cdot || \cdot)$ is the R\'enyi divergence, and the maximization over $t$ of this divergence is the Chernoff divergence.
\end{itemize}
As a result, $D_+$ can be viewed as a generalization of the Hellinger and Chernoff divergences. We hence call it the Chernoff-Hellinger (CH) divergence. Theorem \ref{thm2} gives hence an operational meaning to $\dd$ with the community recovery problem.  It further shows that the limit can be efficiently achieved.

\section{Proof Techniques and Algorithms}
\subsection{Partial recovery and the {\tt Sphere-comparison} algorithm}\label{pt1}
The first key observation used to classify graphs' vertices is that if $v$ is a vertex in a graph drawn from $\gss(n,p,Q)$ then for all small $r$ the expected number of vertices in community $i$ that are $r$ edges away from $v$ is approximately $e_i\cdot(PQ)^re_{\sigma_v}$. So, we define: 

\begin{definition}\label{def-n1}
For any vertex $v$, let $N_r(v)$ be the set of all vertices with shortest path to $v$ of length $r$. If there are multiple graphs that $v$ could be considered a vertex in, let $N_{r[G]}(v)$ be the set of all vertices with shortest paths in $G$ to $v$ of length $r$.
\end{definition}

We also refer to the vector with $i$-th entry equal to the number of vertices in $N_r(v)$ that are in community $i$ as $N_r(v)$. One could determine $e_{\sigma_v}$ given $(PQ)^re_{\sigma_v}$, but using $N_r(v)$ to approximate that would require knowing how many of the vertices in $N_r(v)$ are in each community. So, we attempt to get information relating to how many vertices in $N_r(v)$ are in each community by checking how it connects to $N_{r'}(v')$ for some vertex $v'$ and integer $r'$. The obvious way to do this would be to compute the cardinality of their intersection. Unfortunately, whether a given vertex in community $i$ is in $N_r(v)$ is not independent of whether it is in $N_{r'}(v')$, which causes the cardinality of $|N_r(v)\cap N_{r'}(v')|$ to differ from what one would expect badly enough to disrupt plans to use it for approximations.

In order to get around this, we randomly assign every edge in $G$ to a set $E$ with probability $c$. We hence define the following. 

\begin{definition}
For any vertices $v, v'\in G$, $r,r'\in \mathbb{Z}$, and subset of $G$'s edges $E$, let $N_{r,r'[E]}(v\cdot v')$ be the number of pairs of vertices $(v_1,v_2)$ such that $v_1\in N_{r[G\backslash E]}(v)$, $v_2\in N_{r'[G\backslash E]}(v')$, and $(v_1,v_2)\in E$.
\end{definition}

Note that $E$ and $G\backslash E$ are disjoint; however, $G$ is sparse enough that even if they were generated independently a given pair of vertices would have an edge between them in both with probability $O(\frac{1}{n^2})$. So, $E$ is approximately independent of $G\backslash E$. Thus, for any $v_1\in N_{r[G/E]}(v)$ and $v_2\in N_{r'[G/E]}(v')$, $(v_1,v_2)\in E$ with a probability of approximately $cQ_{\sigma_{v_1},\sigma_{v_2}}/n$. As a result,

\begin{align*}
N_{r,r'}[E](v\cdot v')&\approx N_{r[G\backslash E]}(v)\cdot \frac{cQ}{n} N_{r'[G\backslash E]}(v')\\
&\approx ((1-c)PQ)^re_{\sigma_v}\cdot \frac{cQ}{n} ((1-c)PQ)^{r'}e_{\sigma_{v'}}\\
&=c(1-c)^{r+r'}e_{\sigma_v}\cdot Q(PQ)^{r+r'}e_{\sigma_{v'}}/n
\end{align*}

 Let $\lambda_1,...,\lambda_h$ be the distinct eigenvalues of $PQ$, ordered so that $|\lambda_1|\ge|\lambda_2|\ge...\ge|\lambda_h|\ge 0$. Also define $\eta$ so that $\eta=h$ if $\lambda_h\ne 0$ and $\eta=h-1$ if $\lambda_h=0$. If $W_i$ is the eigenspace of $PQ$ corresponding to the eigenvalue $\lambda_i$, and $P_{W_i}$ is the projection operator on to $W_i$, then 
\begin{align*}
N_{r,r'}[E](v\cdot v')&\approx c(1-c)^{r+r'}e_{\sigma_v}\cdot Q(PQ)^{r+r'}e_{\sigma_{v'}}/n\\
&=\frac{c(1-c)^{r+r'}}{n} \left(\sum_i P_{W_i}(e_{\sigma_v})\right)\cdot Q(PQ)^{r+r'}\left(\sum_j P_{W_j}(e_{\sigma_{v'}})\right)\\
&=\frac{c(1-c)^{r+r'}}{n} \sum_{i,j}P_{W_i}(e_{\sigma_v})\cdot Q(PQ)^{r+r'}P_{W_j}(e_{\sigma_{v'}})\\
&=\frac{c(1-c)^{r+r'}}{n} \sum_{i,j}P_{W_i}(e_{\sigma_v})\cdot P^{-1}(\lambda_j)^{r+r'+1}P_{W_j}(e_{\sigma_{v'}})\\
&=\frac{c(1-c)^{r+r'}}{n} \sum_{i}\lambda_i^{r+r'+1} P_{W_i}(e_{\sigma_v})\cdot P^{-1}P_{W_i}(e_{\sigma_{v'}})
\end{align*}
where the final equality holds because for all $i\ne j$, 
\begin{align*}
\lambda_i  P_{W_i}(e_{\sigma_v})\cdot P^{-1} P_{W_j}(e_{\sigma_{v'}})&=(PQ  P_{W_i}(e_{\sigma_v}))\cdot P^{-1} P_{W_j}(e_{\sigma_{v'}})\\
&= P_{W_i}(e_{\sigma_v})\cdot Q P_{W_j}(e_{\sigma_{v'}})\\
&= P_{W_i}(e_{\sigma_v})\cdot P^{-1} \lambda_j P_{W_j}(e_{\sigma_{v'}}),
\end{align*}
and since $\lambda_i\ne \lambda_j$, this implies that $P_{W_i}(e_{\sigma_v})\cdot P^{-1} P_{W_j}(e_{\sigma_{v'}})=0$.

That implies that one can approximately solve for $P_{W_i}e_{\sigma_v}\cdot P^{-1}P_{W_i}e_{\sigma_{v'}}$ given $N_{r,r'+j}(v\cdot v')$ for all $0\le j<\eta$. Of course, this requires $r$ and $r'$ to be large enough such that \[\frac{c(1-c)^{r+r'}}{n} \lambda_i^{r+r'+1} P_{W_i}(e_{\sigma_v})\cdot P^{-1}P_{W_i}(e_{\sigma_{v'}})\] is large relative to the error terms for all $i\le \eta$. At a minimum, that requires that $|(1-c)\lambda_i|^{r+r'+1}=\omega(n)$, so \[r+r' >\log( n)/\log((1-c)|\lambda_{\eta}|).\] On the flip side, one also needs \[r,r'<\log(n)/\log ((1-c)\lambda_1)\] because otherwise the graph will start running out of vertices before one gets $r$ steps away from $v$ or $r'$ steps away from $v'$.

Furthermore, for any $v$ and $v'$, 
\begin{align*}
0 &\le P_{W_i}(e_{\sigma_v}-e_{\sigma_{v'}})\cdot P^{-1}P_{W_i}(e_{\sigma_v}-e_{\sigma_{v'}})\\
&=P_{W_i}e_{\sigma_v}\cdot P^{-1}P_{W_i}e_{\sigma_{v}}-2P_{W_i}e_{\sigma_v}\cdot P^{-1}P_{W_i}e_{\sigma_{v'}}+P_{W_i}e_{\sigma_{v'}}\cdot P^{-1}P_{W_i}e_{\sigma_{v'}}
\end{align*}
with equality for all $i$ if and only if $\sigma_v=\sigma_{v'}$, so sufficiently good approximations of $P_{W_i}e_{\sigma_v}\cdot P^{-1}P_{W_i}e_{\sigma_{v}}, P_{W_i}e_{\sigma_v}\cdot P^{-1}P_{W_i}e_{\sigma_{v'}}$ and $P_{W_i}e_{\sigma_v'}\cdot P^{-1}P_{W_i}e_{\sigma_{v'}}$ can be used to determine which pairs of vertices are in the same community as follows. \\

\noindent
{\bf The Vertex\_comparison\_algorithm}. 
The inputs are $(v, v', r, r',E, x, c)$, where $v,v'$ are two vertices, $r, r'$ are positive integers, $E$ is a subset of $G$'s edges, $x$ is a positive real number, and $c$ is a real number between $0$ and $1$. 

The algorithm outputs a decision on whether $v$ and $v'$ are in the same community or not. It proceeds as follows. 

(1) Solve the systems of equations: 
\[\sum_i ((1-c)\lambda_i)^{r+r'+j+1}z_i(v\cdot v')=\frac{(1-c)n}{c}N_{r+j,r'[E]}(v\cdot v') \text{ for } 0\le j<\eta\]
\[\sum_i ((1-c)\lambda_i)^{r+r'+j+1}z_i(v\cdot v)=\frac{(1-c)n}{c}N_{r+j,r'[E]}(v\cdot v) \text{ for } 0\le j<\eta\]
\[\sum_i ((1-c)\lambda_i)^{r+r'+j+1}z_i(v'\cdot v')=\frac{(1-c)n}{c}N_{r+j,r'[E]}(v'\cdot v') \text{ for } 0\le j<\eta\]

(2) If $\exists i: z_i(v\cdot v)-2z_i(v\cdot v')+z_i(v'\cdot v')> 5(2x(\min p_j)^{-1/2}+x^2)$ then conclude that $v$ and $v'$ are in different communities. Otherwise, conclude that $v$ and $v'$ are in the same community.\\

One could generate a reasonable classification based solely on this method of comparing vertices (with an appropriate choice of the parameters, as later detailed). However, that would require computing $N_{r,r'[E]}(v\cdot v)$ for every vertex in the graph with fairly large $r+r'$, which would be slow. Instead, we use the fact that for any vertices $v$, $v'$, and $v''$ with $\sigma_v=\sigma_{v'}\ne\sigma_{v''}$, 
\begin{align*}
&P_{W_i}e_{\sigma_{v'}}\cdot P^{-1}P_{W_i}e_{\sigma_{v'}}-2P_{W_i}e_{\sigma_v}\cdot P^{-1}P_{W_i}e_{\sigma_{v'}}+P_{W_i}e_{\sigma_v}\cdot P^{-1}P_{W_i}e_{\sigma_{v}}=0\\
&\le P_{W_i}e_{\sigma_{v''}}\cdot P^{-1}P_{W_i}e_{\sigma_{v''}}-2P_{W_i}e_{\sigma_v}\cdot P^{-1}P_{W_i}e_{\sigma_{v''}}+P_{W_i}e_{\sigma_v}\cdot P^{-1}P_{W_i}e_{\sigma_{v}}
\end{align*}
for all $i$, and the inequality is strict for at least one $i$. So, subtracting $P_{W_i}e_{\sigma_v}\cdot P^{-1}P_{W_i}e_{\sigma_{v}}$ from both sides gives us that
\[P_{W_i}e_{\sigma_{v'}}\cdot P^{-1}P_{W_i}e_{\sigma_{v'}}-2P_{W_i}e_{\sigma_v}\cdot P^{-1}P_{W_i}e_{\sigma_{v'}}\le P_{W_i}e_{\sigma_{v''}}\cdot P^{-1}P_{W_i}e_{\sigma_{v''}}-2P_{W_i}e_{\sigma_v}\cdot P^{-1}P_{W_i}e_{\sigma_{v''}}\]
for all $i$, and the inequality is still strict for at least one $i$.

So, given a representative vertex in each community, we can determine which of them a given vertex, $v$, is in the same community as without needing to know the value of $P_{W_i}e_{\sigma_v}\cdot P^{-1}P_{W_i}e_{\sigma_{v}}$ as follows.\\

\noindent
{\bf The Vertex\_classification\_algorithm}. The inputs are $(v[],v', r,r',E,x,c)$, where $v[]$ is a list of vertices, $v'$ is a vertex, $r,r'$ are positive integers, $E$ is a subset of $G$'s edges, $x$ is a positive real number, and $c$ is a real number between $0$ and $1$. It is assumed that $z_i(v[\sigma]\cdot v[\sigma] )$ satisfying 
\[\sum_i ((1-c)\lambda_i)^{r+r'+j+1}z_i(v[\sigma]\cdot v[\sigma])=\frac{(1-c)n}{c}N_{r+j,r'[E]}(v[\sigma]\cdot v[\sigma]) \text{ for } 0\le j<\eta\]
have already been computed for every $v[\sigma]\in v[]$. 

The algorithm is supposed to output $\sigma$ such that $v'$ is in the same community as $v[\sigma]$. It works as follows. 

(1) For each $\sigma$ solve the system of equations  \[\sum_i ((1-c)\lambda_i)^{r+r'+j+1}z_i(v[\sigma]\cdot v')=\frac{(1-c)n}{c}N_{r+j,r'[E]}(v[\sigma]\cdot v') \text{ for } 0\le j<\eta\]

(2) If there exists a unique $\sigma$ such that for all $\sigma'\ne \sigma$ and all $i$, \[z_i(v[\sigma]\cdot v[\sigma])-2z_i(v[\sigma]\cdot v')\le z_i(v[\sigma']\cdot v[\sigma'])-2z_i(v[\sigma']\cdot v') +\frac{19}{3}\cdot (2x(\min p_j)^{-1/2}+x^2)\] then conclude that $v'$ is in the same community as $v[\sigma]$.

(3) Otherwise, Fail.\\

This runs fairly quickly if $r$ is large and $r'$ is small because the algorithm only requires focusing on $N_{r'}(v')$ vertices. This leads to the following plan for partial recovery. First, randomly select a set of vertices that is large enough to contain at least one vertex from each community with high probability. Next, compare all of the selected vertices in an attempt to determine which of them are in the same communities. Then, pick one in each community. After that, use the algorithm above to attempt to determine which community each of the remaining vertices is in. As long as there actually was at least one vertex from each community in the initial set and none of the approximations were particularly bad, this should give a reasonably accurate classification. \\

\noindent
{\bf The Unreliable\_graph\_classification\_algorithm}. The inputs are $(G,c,m,\epsilon,x)$, where $G$ is a graph, $c$ is a real number between $0$ and $1$, $m$ is a positive integer, $\epsilon$ is a real number between $0$ and $1$, and $x$ is a positive real number. 

The algorithm outputs an alleged list of communities for $G$. It works as follows.

(1) Randomly assign each edge in $G$ to $E$ independently with probability $c$.

(2) Randomly select $m$ vertices in $G$, $v[0],...,v[m-1]$.

(3) Set $r=(1-\frac{\epsilon}{3})\log n/\log ((1-c)\lambda_1)-\eta$ and $r'=\frac{2\epsilon}{3}\cdot \log n/\log((1-c) \lambda_1)$

(4) Compute $N_{r''[G\backslash E]}(v[i])$ for each $r''<r+\eta$ and $0\le i<m$.

(5) Run $\text{Vertex\_comparison\_algorithm}(v[i],v[j],r,r',E,x)$ for every $i$ and $j$

(6) If these give results consistent with some community memberships which indicate that there is at least one vertex in each community in $v[]$, randomly select one alleged member of each community $v'[\sigma]$. Otherwise, fail.

(7) For every $v''$ in the graph, compute $N_{r''[G\backslash E]}(v'')$ for each $r''<r'$. Then, run\newline $\text{Vertex\_classification\_algorithm}(v'[],v'', r,r',E,x)$ in order to get a hypothesized classification of $v''$.

(8) Return the resulting classification.\\

The risk that this randomly gives a bad classification due to a bad set of initial vertices can be mitigated as follows. First, repeat the previous classification procedure several times. Next, discard any classification that differs too badly from the majority. Assuming that the procedure gives a good classification more often than not, this should eliminate any really bad classification. Finally, average the remaining classifications together. This last procedure completes the {\tt Sphere comparison} algorithm.\\

\noindent
{\bf The Reliable\_graph\_classification\_algorithm (i.e., {\tt Sphere comparison})}. The inputs are $(G,c,m,\epsilon,x,T(n))$, where $G$ is a graph, $c$ is a real number between $0$ and $1$, $m$ is a positive integer, $\epsilon$ is a real number between $0$ and $1$, $x$ is a positive real number, and $T$ is a function from the positive integers to itself. 

The algorithm outputs an alleged list of communities for $G$. It works as follows.

(1) Run $\text{Unreliable\_graph\_classification\_algorithm}(G,c,m,\epsilon,x)$ $T(n)$ times and record the resulting classifications.

(2) Discard any classification that has greater than $$4ke^{-\frac{(1-c)x^2\lambda_{\eta}^2\min p_i}{16\lambda_1 k(1+x)}}/(1-e^{-\frac{(1-c)x^2\lambda_{\eta}^2\min p_i}{16\lambda_1 k(1+x)}\cdot((\frac{(1-c)\lambda_{\eta}^4}{4\lambda_1^3})-1)})$$  disagreement with more than half of the other classifications. In this step, define the disagreement between two classifications as the minimum disagreement over all bijections between their communities.

(3) Let $\{\sigma[i]\}$ be the remaining classifications. For each vertex $v\in G$, randomly select some $i$ and assert that $\sigma_v$ is the $j$ that maximizes $|\{v': \sigma[1]_{v'=}j\}\cap\{v':\sigma[i]_{v'}=\sigma[i]_v\}|$. In other words, assume that $\sigma[i]$ classifies $v$ correctly and then translate that to a community of $\sigma[1]$ by assuming the communities of $\sigma[i]$ correspond to the communities of $\sigma[1]$ that they have the greatest overlap with.

(4) Return the resulting combined classification.\\

If the conditions of theorem $2$ are satisfied, then there exists $x$ such that for all sufficiently small $c$, $$\text{Reliable\_graph\_classification\_algorithm}(G,c,\ln(4k)/\min p_i,\epsilon,x,\ln n)$$ classifies at least 
\begin{align}
1- \frac{4ke^{-\frac{C \rho}{16k}}}{1-e^{-\frac{C\rho}{16k}\left(\frac{(\lambda')^2}{4\lambda^2}\rho-1\right)}}
\end{align}
of $G$'s vertices correctly with probability $1-o(1)$ and it runs in $O(n^{1+\epsilon})$ time.

\subsection{Exact recovery and the {\tt Degree-profiling} algorithm}\label{pt2}
With our previous result achieving almost exact recovery of the nodes, we are in a position to complete the exact recovery via a procedure that performs local improvements on the rough solution. While, the exact recovery requirement is rather strong, we show that it benefits from a phase transition, as opposed to almost exact recovery, which allows to benchmark algorithms on a sharp limit (see Introduction). 

Our analysis of exact recovery relies on the fact that the probability distribution of the numbers of neighbors a given vertex has in each community is essentially a multivariable Poisson distribution. We hence investigate an hypothesis problem (see Section \ref{testing}), where a node in the SBM graph with known clusters (up to $o(n)$ errors due to our previous results) is taken and re-classified based on its degree profile, i.e., on the number of neighbors it has in each community. This requires testing between $k$ multivariate Poisson distributions of different means $m_1,\dots,m_k \in \mZ_+^k$, where $m_i=\ln(n)\theta_i$ for $\theta_i=(PQ)_i$, $i \in [k]$. The error probability of the optimal test depends on the degree of overlap between any two of these Poisson distributions, which we show is either $o(\frac{1}{n})$ or $\omega(\frac{1}{n})$. This is where the CH-divergence emerges as the exponent for the error probability. It is captured by the following sharp estimate derived in Section \ref{testing}, where $\mathcal{P}_{c}$ denotes the Poisson distribution of mean $c$.
\begin{theorem}(See Lemma \ref{hell-expo}.)
For any $\theta_1, \theta_2 \in (\mR_+\setminus \{0\})^k$ with $\theta_1 \neq \theta_2$ and $p_1,p_2 \in \mR_+\setminus \{0\}$, 
\begin{align*}
& \sum_{x \in \mZ_+^k} \min(\mathcal{P}_{\ln(n) \theta_1}(x) p_{1} ,  \mathcal{P}_{\ln(n) \theta_2}(x) p_{2}) = \Theta\left(n^{- \dd(\theta_1,\theta_2) - o(1)} \right),
\end{align*}
where $\dd(\theta_1,\theta_2)$ is the CH-divergence as defined in \eqref{h-div1}.
\end{theorem}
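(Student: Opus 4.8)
The plan is to use that $\mathcal{P}_{\ln(n)\theta}$ is a product over the $k$ coordinates of independent univariate Poissons (the $u$-th with mean $\ln(n)\theta(u)$), and to sandwich $S_n:=\sum_{x\in\mZ_+^k}\min(\mathcal{P}_{\ln(n)\theta_1}(x)p_1,\mathcal{P}_{\ln(n)\theta_2}(x)p_2)$ between $n^{-\dd(\theta_1,\theta_2)-o(1)}$ and $n^{-\dd(\theta_1,\theta_2)+o(1)}$, which is what the $\Theta(\cdot)$ claim asks for on the exponential scale.

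For the upper bound I would apply $\min(a,b)\le a^tb^{1-t}$ (valid for $a,b\ge 0$, $t\in[0,1]$), giving $S_n\le p_1^tp_2^{1-t}\sum_x\mathcal{P}_{\ln(n)\theta_1}(x)^t\mathcal{P}_{\ln(n)\theta_2}(x)^{1-t}$. The sum factors over coordinates, and each factor $\sum_{j\ge 0}(e^{-\lambda}\lambda^j/j!)^t(e^{-\mu}\mu^j/j!)^{1-t}$ with $\lambda=\ln(n)\theta_1(u)$, $\mu=\ln(n)\theta_2(u)$ evaluates, via $\sum_j z^j/j!=e^z$, to $\exp(-(t\lambda+(1-t)\mu-\lambda^t\mu^{1-t}))=n^{-(t\theta_1(u)+(1-t)\theta_2(u)-\theta_1(u)^t\theta_2(u)^{1-t})}$. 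Multiplying over $u$ gives $\sum_x\mathcal{P}_{\ln(n)\theta_1}(x)^t\mathcal{P}_{\ln(n)\theta_2}(x)^{1-t}=n^{-D_t(\theta_1,\theta_2)}$ with $D_t$ as defined above; since $p_1,p_2$ are fixed constants, optimizing over $t$ yields $S_n\le O(1)\,n^{-\max_t D_t(\theta_1,\theta_2)}=n^{-\dd(\theta_1,\theta_2)+o(1)}$.

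For the matching lower bound I would change measure to the geometric-mean tilt at the optimal parameter. Let $t^\star$ maximize $t\mapsto D_t(\theta_1,\theta_2)$: since $D_t$ is concave in $t$ with $D_0=D_1=0$ while coordinatewise $t\theta_1(u)+(1-t)\theta_2(u)>\theta_1(u)^t\theta_2(u)^{1-t}$ for $t\in(0,1)$ whenever $\theta_1(u)\ne\theta_2(u)$, we get $t^\star\in(0,1)$ and the stationarity identity $\sum_u(\theta_1(u)-\theta_2(u))=\sum_u\theta_1(u)^{t^\star}\theta_2(u)^{1-t^\star}\ln(\theta_1(u)/\theta_2(u))$. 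A short computation shows that the normalized density proportional to $\mathcal{P}_{\ln(n)\theta_1}(x)^{t^\star}\mathcal{P}_{\ln(n)\theta_2}(x)^{1-t^\star}$ is exactly the product Poisson law $R:=\bigotimes_{u\in[k]}\mathcal{P}_{\ln(n)\theta_1(u)^{t^\star}\theta_2(u)^{1-t^\star}}$, and that $S_n=n^{-\dd(\theta_1,\theta_2)}\,\mathbb{E}_R[\min(p_1f_1(X),p_2f_2(X))]$ where $f_1:=n^{\dd(\theta_1,\theta_2)}\mathcal{P}_{\ln(n)\theta_1}/R$, $f_2:=n^{\dd(\theta_1,\theta_2)}\mathcal{P}_{\ln(n)\theta_2}/R$, and $f_2/f_1$ is the likelihood ratio $\mathcal{P}_{\ln(n)\theta_2}(X)/\mathcal{P}_{\ln(n)\theta_1}(X)$. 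The crux is that $\ln f_1(X)$ and $\ln(\mathcal{P}_{\ln(n)\theta_2}(X)/\mathcal{P}_{\ln(n)\theta_1}(X))$ are affine in the independent Poisson coordinates $X_u$ under $R$, their $R$-means are $0$ — this is precisely the stationarity identity for $t^\star$ — and their $R$-variances are $O(\ln n)$; by Chebyshev, with $R$-probability at least $1/2$ both lie in a window of width $o(\ln n)$ about $0$, on which $f_1(X)\ge n^{-o(1)}$ and $f_2(X)=f_1(X)\cdot(\text{likelihood ratio})\ge n^{-o(1)}$. Hence $\mathbb{E}_R[\min(p_1f_1,p_2f_2)]\ge\tfrac12\min(p_1,p_2)\,n^{-o(1)}=n^{-o(1)}$, so $S_n\ge n^{-\dd(\theta_1,\theta_2)-o(1)}$, completing the estimate.

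The main obstacle is the lower bound: first the bookkeeping of the change of measure, so that all the $\ln n$-scaling lands inside the exponent $\dd(\theta_1,\theta_2)$ and the residual factors $f_1,f_2$ have $R$-mean-zero logarithms; and second, verifying that the first-order optimality of $t^\star$ is exactly what makes those log-statistics $R$-mean-zero, so that a second-moment (Chebyshev) concentration suffices rather than a sharper large-deviation bound. The remaining ingredients — the Poisson moment generating function computations, and the degenerate coordinates with $\theta_1(u)=\theta_2(u)$ (which simply drop out of the variances, while $t^\star$ stays in the interior as long as $\theta_1\ne\theta_2$) — are routine.
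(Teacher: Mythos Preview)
Your argument is correct, but both halves differ from the paper's. For the upper bound you use the clean Chernoff inequality $\min(a,b)\le a^t b^{1-t}$ and the Poisson mgf identity to get $S_n\le p_1^{t}p_2^{1-t}\,n^{-D_t(\theta_1,\theta_2)}$, then optimize in $t$; the paper instead manipulates the summand directly and exploits geometric decay in one coordinate away from the tilted mean, which buys an extra $1/\sqrt{\ln n}$ factor (hence the sharper $O(n^{-\dd-\frac{\ln\ln n}{2\ln n}})$ in the lemma). For the lower bound you tilt to the product Poisson $R$ with means $\ln(n)\theta_1(u)^{t^\star}\theta_2(u)^{1-t^\star}$, use the first-order condition at $t^\star$ to center the log-likelihood ratios, and Chebyshev to keep them $o(\ln n)$ with $R$-probability bounded away from $0$; the paper skips all of this machinery and simply evaluates the minimum at the single point $x_u=\lfloor\theta_1(u)^{t^\star}\theta_2(u)^{1-t^\star}\ln n\rfloor$ via Stirling, obtaining $\Omega(n^{-\dd}/(\ln n)^{k/2})$. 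Your route is more robust (it would adapt to other exponential families), while the paper's is more elementary and yields the explicit polylogarithmic corrections; for the $\Theta(n^{-\dd-o(1)})$ statement either suffices.
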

Using this result, we show that depending on the parameters of the SBM, the error probability of the optimal test is either $o(\frac{1}{n})$ or $\omega(\frac{1}{n})$ depending on $\min_{i<j} \dd(\theta_i,\theta_j)$. If the error probability is $\omega(\frac{1}{n})$ then any method of distinguishing between vertices in those two communities must fail with probability $\omega(\frac{1}{n})$, so any possible algorithm attempting to distinguish between them must misclassify at least one vertex with probability $1-o(1)$. On the other hand, if the degree of overlap between all communities we are trying to distinguish between is $o(1/n)$ then with probability $1-o(1)$ one could correctly classify any vertex in the graph if one knew what community each of its neighbors was in. There exists $\delta$ such that attempting to classify a vertex based on classifications of its neighbors that are wrong with probability $x$ results in a probability of misclassifying the vertex that is only $n^{\delta x}$ times as high as it would be if they were all classified correctly. Based on this, the obvious approach to exact recovery would be to use a partial recovery algorithm to create a preliminary classification and then attempt to determine which family of communities each vertex is in based on its neighbors' alleged communities. However, the standard partial recovery algorithm has a constant error rate, so this proceedure's output would have an error rate $n^c$ times as large as if each vertex were being classified based on its neighbors' true communities for some $c>0$. If the degrees of overlap are only barely below $1/n$ then this would increase the error rate enough that this procedure  would misclassify at least one vertex with high probability.

Instead, we go through three successively more accurate classifications as follows. Given a partial reconstruction of the communities with an error rate that is a sufficiently low constant, one can classify vertices based on their neighbors' alleged communities with an accuracy of $1-O(n^{-c})$ for some constant $c>0$. Then one can use this classification of a vertex's neighbors to determine which family of communities it is in with an accuracy of $1-o(\frac{1}{n}\cdot n^{\delta c'n^{-c}})=1-o(1/n)$. Therefore, the resulting classification is correct with probability $1-o(1)$. 

We formulate the algorithm in an adaptive way, where we first identify which communities can be exactly recovered with the notion of ``finest partition,'' and then proceed to extract this partition. In other words, even in the case where not all communities can be exactly recovered, the algorithm may be able to fully extract a subset of the communities. Overall, the algorithm for exact recovery works as follows.\\

{\bf The {\tt Degree-profiling} algorithm.} The inputs are $(G, \gamma)$, where $G$ is a graph, and $\gamma\in [0,1]$ (see Theorem \ref{thm2} for how to set $\gamma$). The algorithm outputs an assignment of each vertex to one of the groups of communities $\{A_1,\dots,A_t\}$, where $A_1,\dots,A_t$ is the partition of $[k]$ in to the largest number of subsets such that $\dd((pQ)_i,(pQ)_j) \geq 1$ for all $i,j$ in $[k]$ that are in different subsets (i.e., the ``finest partition,'' see Firgure \ref{finest-partition}). It does the following:

(1) Define the graph $g'$ on the vertex set $[n]$ by selecting each edge in $g$ independently with probability $\gamma$, and define the graph $g''$ that contains the edges in $g$ that are not in $g'$. 

(2) Run {\tt Sphere-comparison} on $g'$ to obtain the preliminary classification $\sigma' \in [k]^n$ (see Section \ref{partial-sec}.) 

(3) Determine the edge density between each pair of alleged communities, and use this information and the alleged communities' sizes to attempt to identify the communities up to symmetry. 

(4) For each node $v \in [n]$, determine in which community node $v$ is most likely to belong to based on its degree profile computed from the preliminary classification $\sigma'$ (see Section \ref{testing}), and call it $\sigma''_v$

(5) For each node $v \in [n]$, determine in which group $A_1,\dots,A_t$ node $v$ is most likely to belong to based on its degree profile computed from the preliminary classification $\sigma''$ (see Section \ref{testing}).


\section{Overlapping communities}\label{overlap}
We now define a model that accounts for overlapping communities, we refer to it as the overlapping stochastic block model (OSBM). 
\begin{definition}
Let $n,t \in \mZ_+$, $f: \{0,1\}^t \times \{0,1\}^t \to [0,1]$ symmetric, and $p$ a probability distribution on $\{0,1\}^t$. A random graph with distribution OSBM$(n,p,f)$ is generated on the vertex set $[n]$ by drawing independently for each $v \in [n]$ the vector-labels (or user profiles) $X(v)$ under $p$, and by drawing independently for each $u,v \in [n]$, $u < v$, an edge between $u$ and $v$ with probability $f(X(u),X(v))$.
\end{definition}
\begin{example}\label{common}
One may consider $f(x,y)= \theta_g(x,y)$, where $x_i$ encodes whether a node is in community $i$ or not, and 
\begin{align}
\theta_g(x,y)= g(\langle x , y \rangle),
\end{align}
where $\langle x , y \rangle = \sum_{i=1}^t x_i y_i$ counts the number of common communities between the labels $x$ and $y$, and $g:\{0,1,\dots, t\} \to [0,1]$ is a function that maps the overlap score into probabilities ($g$ is typically increasing). 
\end{example}
\begin{example}
As a special case of the previous example, one may consider that a connection takes place between each pair of nodes as follows: 
each community (i.e., each component in the user profile) generates a connection independently with probability $q_+$ if the two nodes are in that community (i.e., if that component is 1 for both profiles), and multiple connections are equivalent to one connection. We also assume that any pair of nodes without a common community connects with probability $q_-$, so that   
\begin{align}
g(s)=
\begin{cases}
1- (1-q_+)^s, &\text{ if } s \neq 0,\\
p_-, &\text{ if } s=0.
\end{cases}
\end{align}
If we consider $q_-$ and $q_+$ to be vanishing, like $O(\log(n)/n)$, we may consider the equivalent model where
\begin{align}
g(s)&=
\begin{cases}
sq_+, &\text{ if } s \neq 0,\\
p_-, &\text{ if } s=0.
\end{cases}
\end{align}
If $t=1$, this model collapses to the usual symmetric stochastic block model with non-overlapping communities. 
\end{example}

Note that in general we can represent the OSBM as a SBM with $k=2^t$ communities, where each community represents a possible profile in $\{0,1\}^t$. For example, two overlapping communities can be modelled by assigning nodes with a single attribute $(1,0)$ and $(0,1)$ to each of the disjoint communities and nodes with both attributes $(1,1)$ to the overlap community, while nodes having none of the attributes, i.e.,  $(0,0)$, may be assigned to the null community.

Assume now that we identify community $i \in [k]$ with the profile corresponding to the binary expansion of $i-1$. The prior and connectivity matrix of the corresponding SBM are then given by 
\begin{align}
p_i&=p(b(i))\label{fmap1} \\
q_{i,j}&=f(b(i),b(j)), \label{fmap2}
\end{align}
where $b(i)$ is the binary expansion of $i-1$, and 
\begin{align}
\text{OSBM}(n,p,f) \stackrel{(d)}{=} \text{SBM}(n,p,q).
\end{align}
We can then use the results of previous sections to obtain exact recovery in the OSBM. 
\begin{corollary}
Exact recovery is solvable for the OSBM if the conditions of Theorem \ref{thm2} apply to the SBM$(n,p,q)$ with $p$ and $q$ as defined in \eqref{fmap1}, \eqref{fmap2}.  
\end{corollary}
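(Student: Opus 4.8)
The plan is to prove this as a direct reduction to Theorem \ref{thm2}, exploiting the distributional identity $\text{OSBM}(n,p,f) \stackrel{(d)}{=} \text{SBM}(n,p,q)$ already recorded above, with $p$ and $q$ as in \eqref{fmap1} and \eqref{fmap2}. First I would verify that identity from the definitions: in both models the $n$ vertex-labels are drawn i.i.d., and the map $i \mapsto b(i)$ (the binary expansion of $i-1$), restricted to the support of $p$, identifies the law $p$ on $[k]$ with the given law $p$ on $\{0,1\}^t$; conditioned on the labels, each unordered pair $\{u,v\}$ then receives an edge independently with probability $f(X(u),X(v)) = q_{\sigma_u,\sigma_v}$, precisely because $q_{i,j} := f(b(i),b(j))$. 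Hence the joint law of (labels, graph) coincides under the two models, and recovering the user profile $X(v)$ of every vertex of the OSBM is literally the same task --- under the relabelling $b$ --- as recovering $\sigma_v$ for every vertex of the associated SBM. In particular, the {\tt Degree-profiling} output for $\text{SBM}(n,p,q)$, translated through $b$, is a valid exact-recovery output for $\text{OSBM}(n,p,f)$.

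The second step is a bookkeeping normalization: I would take $k := |\supp(p)|$ rather than $2^t$, discarding the profiles of probability zero (w.h.p.\ no vertex carries such a profile, so this is harmless), so that the prior $p$ of the associated SBM has full support, as the hypotheses of Theorem \ref{thm2} require; with $t$ fixed, $k$ is fixed, and the $\ln(n)Q/n$ scaling needed to place $\text{SBM}(n,p,q)$ in the $\gs$ regime is inherited from the corresponding scaling of $f$. Then one simply invokes Theorem \ref{thm2}: if its hypotheses hold for $\text{SBM}(n,p,q)$ --- the entries of $q$ are nonzero and $\dd((PQ)_i,(PQ)_j) \ge 1$ for all $i \ne j$ in $\supp(p)$ --- the {\tt Degree-profiling} algorithm recovers all these communities w.h.p., and by the first step this is exact recovery of all user profiles in the OSBM.

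The one point that genuinely needs care --- and where I expect the only real obstacle --- is that the OSBM routinely produces $q$-matrices with \emph{zero} entries (e.g.\ setting $q_- = 0$ makes $f = 0$ on every pair of profiles sharing no community) and with \emph{repeated} community profiles $(PQ)_i = (PQ)_j$ (distinct binary labels $x \ne x'$ may induce identical connection vectors to all other profiles in $\supp(p)$). The former is exactly the situation flagged in the hypothesis of Theorem \ref{thm2} and addressed in Remark \ref{qzero}, which I would invoke directly; the latter is already absorbed into the ``finest partition'' formulation of Theorem \ref{thm2}, so indistinguishable overlapping profiles get merged into a common block rather than causing a failure. Reading ``exact recovery for the OSBM'' as recovery of the finest partition of the $\{0,1\}^t$-profiles that occur, no further argument is needed beyond the two steps above.
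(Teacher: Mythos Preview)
Your proposal is correct and matches the paper's approach: the paper states the corollary without proof, treating it as an immediate consequence of the distributional identity $\text{OSBM}(n,p,f) \stackrel{(d)}{=} \text{SBM}(n,p,q)$ recorded just above, so your reduction to Theorem~\ref{thm2} via that identity is exactly what is intended. Your additional bookkeeping about zero-probability profiles, zero entries of $q$, and repeated community profiles is sound but goes beyond what the corollary strictly requires, since its hypothesis ``if the conditions of Theorem~\ref{thm2} apply'' already assumes those hypotheses (positive entries of $Q$, no two rows equal, $p\in(0,1)^k$) are met.
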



\section{Further literature}\label{related_lit}

The stochastic block model was first introduced in \cite{holland}, and in \cite{bui,dyer} as the planted bisection model. 
For the first three decades, a major portion of the literature has focused on exact recovery, in particular on the case with two symmetric communities. The table below summarizes a partial list of works for {\bf exact recovery:}

\begin{small}
\begin{center}
  \begin{tabular}{| l |  c |c   | }
  \hline
Bui, Chaudhuri,  && \\
Leighton, Sipser '84  & min-cut method& $p = \Omega(1/n), q=o(n^{-1-4/((p+q)n)})$\\
  \hline
Dyer, Frieze '89  & min-cut via degrees & $p -q = \Omega(1)$\\
  \hline
Boppana '87  & spectral method & $(p -q)/\sqrt{p+q} = \Omega(\sqrt{\log(n)/n})$\\
  \hline
Snijders, Nowicki '97  & EM algorithm & $p -q = \Omega(1)$\\
\hline
Jerrum, Sorkin '98  &Metropolis algorithm & $p -q= \Omega(n^{-1/6+\e})$\\
   \hline
Condon, Karp '99 & augmentation algorithm & $p -q= \Omega(n^{-1/2+\e})$\\
  \hline
Carson, Impagliazzo '01  & hill-climbing algorithm & $p-q= \Omega(n^{-1/2} \log^4(n))$\\
 \hline
Mcsherry '01  & spectral method & $(p - q)/\sqrt{p} \geq \Omega (\sqrt{\log(n)/n })$\\
 \hline
Bickel, Chen '09  & N-G modularity & $(p -q)/\sqrt{p+q} = \Omega(\log(n)/\sqrt{n})$\\
 \hline
Rohe, Chatterjee, Yu '11  & spectral method & $p -q= \Omega(1)$\\
\hline
  \end{tabular}
\end{center}
\end{small}
These works display an impressive diversity of algorithms, but are mainly driven by the methodology and do not reveal the sharp behavioral transition that takes place in this model, as later shown in \cite{abh,mossel-consist} (see below). Before discussing these results, one should mention that various other works have considered recovery algorithms for multiple communities without identifying phase transitions. We refer to \cite{sbm-algos,chen-xu} for a summary of these results. In particular, \cite{chen-xu} has recently studied information-theoretic vs.\ computational tradeoffs in coarse regimes of the parameters for symmetric block models with a growing number of communities. 

Phase transition phenomena for the SBM appeared first for weak recovery. In 2010, Coja-Oghalan \cite{coja-sbm} introduced the weak-recovery problem, and obtained bounds for the constant average degree regime using a spectral algorithm. Soon after, \cite{decelle} proposed a precise picture for weak-recovery using statistical physics arguments, with a sharp threshold conjectured at $(a-b)^2 = 2(a+b)$, when $a=pn$ and $b=pn$. This has opened the door to a new series of work on the SBM driven by phase transitions. The impossibility part of the conjecture was first proved in \cite{mossel-sbm}, using a reduction to broadcasting on trees \cite{evans}, and the conjecture was fully established in 2014 with \cite{massoulie-STOC,Mossel_SBM2}. 

Recently it was realized that exact recovery also admits a phase transition pheonemon. This was set in \cite{abh}, and shortly after in \cite{mossel-consist}, with the threshold located\footnote{\cite{mossel-consist} allows for a slightly more general model where $a$ and $b$ are $\Theta(1)$ and gives the behaviour at the threshold. Note that at the threshold, one has to distinguish the case of $b=0$ and $b>0$ (assuming $a>b$), since for $b=0$ the clusters are not connected whp and it is not possible to recover the clusters with a vanishing error probability.} at $\sqrt{a}-\sqrt{b} = \sqrt{2}$ when $a=pn/ \ln(n)$ and $b=qn/ \ln(n)$. Efficient algorithms were also obtained in these papers. Hence, weak and exact recovery are solved in the symmetric two-community SBM.

One should also mention a line of work on another community detection model called the Censored Block Model (CBM), studied in \cite{random,abbs}. This model and its variants were also studied in \cite{abbetoc,Huang_Guibas_Graphics,Chen_Huang_Guibas_Graphics,Chen_Goldsmith_ISIT2014,abbs-isit,rough}. A SDP relaxation as in \cite{abh} for the SBM was first proposed in \cite{abbs} for the CBM, with a performance gap having roughly a factor 2. This gap was recently closed in \cite{new-xu}. 
SDP relaxations for block models were also studied in \cite{chen-xu,levina,sbm-groth}. Note that SDP algorithms are polynomial time but far from quasi-linear time. For the CBM, recent works \cite{new-vu,florent_CBM} obtained tight bounds for weak recovery using spectral methods.  

Two recent works \cite{sbm-groth,new-vu} have also obtained bounds for partial recovery in the SBM with multiple communities, for the case of symmetric blocks or with bounds on the connectivity probabilities in terms of symmetric blocks. No phase transitions for exact or weak recovery have yet been proved for the SBM with more than two communities. 


\section{Open problems}
Several extensions would be interesting for the SBM with specified parameters, such as considering parameters that vary with $n$, in particular for the number of communities, or communities of sub-linear sizes. Part of the results obtained in this paper should extend without much difficulty to some of these cases. It would also be interesting to investigate how the complexity of algorithms scales with the number of communities.\footnote{In \cite{chen-xu} this question is studied for coarser regimes of the parameters.} It would also be important to obtain results and algorithms that do not rely on the knowledge of the model parameters. Here also, some of the techniques in this paper may extend. 

For partial recovery, it would interesting to obtain tight upper-bounds on the accuracy of the reconstruction in the general SBM, in particular for the regime of large constant degrees, to check if the bound obtained in this paper is  tight. For the symmetric case, the information-theoretic and computational thresholds for weak-recovery remain open for more than 2 communities. 

Finally, there are many interesting other models to investigate, such as the Censored Block Model \cite{random,abbs,abbs-isit,abbetoc,Huang_Guibas_Graphics,Chen_Huang_Guibas_Graphics,Chen_Goldsmith_ISIT2014,rough}, the Labelled Block Model \cite{label_marc,jiaming} and many more. It would be natural to expect that for these models as well, an information-measure \`a la CH-divergence obtained in this paper determines the recovery threshold. Obtaining such variants would provide major insight towards a theory for community detection in general network models.

\section{Partial Recovery}\label{partial-sec}
\subsection{Formal results}

\begin{theorem}\label{thm1}
Given any $k\in \mathbb{Z}$, $p\in (0,1)^k$ with $|p|=1$, and symmetric matrix $Q$ with no two rows equal, let $\lambda$ be the largest eigenvalue of $PQ$, and $\lambda'$ be the eigenvalue of $PQ$ with the smallest nonzero magnitude. For any \[3\max[\ln(\lambda^2/(\lambda')^2)/\ln((\lambda')^2/\lambda),\ln(2\lambda^2/(\lambda')^2)/\ln(2\lambda^3/(\lambda')^2)]<\epsilon<1,\]
\[ 0<x<\min\left(\frac{\lambda k}{|\lambda'|\min p_i},-(\min p_i)^{-1/2}+\sqrt{1/\min p_i+\min|P_W(e_i-e_j)\cdot P^{-1}P_W(e_i-e_j)|/13}\right)\]
where $P_W(e_i-e_j)$ is the projection of $e_i-e_j$ on to $W$, and the last $\min$ is taken over all communities $i,j$ and eigenspaces $W$ of $PQ$ such that $P_W(e_i)\ne P_W(e_j)$, and
\[2ke^{-\frac{x^2(\lambda')^2\min p_i}{16\lambda k^{3/2}((\min p_i)^{-1/2}+x)}}/(1-e^{-\frac{x^2(\lambda')^2\min p_i}{16\lambda k^{3/2}((\min p_i)^{-1/2}+x)}\cdot((\frac{(\lambda')^4}{4\lambda^3})-1)})<y<\frac{\min p_i}{4\ln(4k)}\]
(which may not exist\footnote{These parameter will exist in our applications of the theorem.}) there exists an algorithm that detects communities in graphs drawn from $\gss(n,p,Q)$ with accuracy at least $1-2y$ at least $1-o(1)$ of the time and runs in $O(n^{1+\epsilon})$ time.
\end{theorem}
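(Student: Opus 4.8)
The plan is to analyze the {\tt Sphere-comparison} algorithm of Section~\ref{pt1} with $c$ chosen sufficiently small, $m=\ln(4k)/\min_i p_i$, $T(n)=\ln n$, and $r,r'$ as in its Step~(3), by converting the heuristic chain of approximations in Section~\ref{pt1} into quantitative concentration statements and then running a sampling-plus-amplification argument. The proof has five parts: (a) concentration of the neighbourhood profiles $N_r(v)$ around $((1-c)PQ)^r e_{\sigma_v}$; (b) concentration of the bilinear statistics $N_{r,r'[E]}(v\cdot v')$ around $\frac{c(1-c)^{r+r'}}{n}\sum_i \lambda_i^{r+r'+1}P_{W_i}(e_{\sigma_v})\cdot P^{-1}P_{W_i}(e_{\sigma_{v'}})$; (c) bounding the error propagated through the linear solve and verifying the threshold decisions of the vertex-comparison and vertex-classification subroutines; (d) a union bound over the $O(1)$ pairwise comparisons and the $n$ classification steps, together with the sampling estimate that the set $v[\,]$ meets every community; (e) amplification over the $T(n)$ runs and the running-time count.

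For (a) I would couple the breadth-first exploration of $N_{r[G\setminus E]}(v)$ with a $k$-type Poisson branching process of mean matrix $(1-c)PQ$ (a vertex of type $j$ has independent $\mathrm{Poisson}((PQ)_{ij})$ children of type $i$, thinned by $1-c$): since $r\le(1-\epsilon/3)\log n/\log((1-c)\lambda_1)-\eta$, the set of vertices within distance $r$ has size $O(n^{1-\epsilon/3})=o(n)$, so the exploration does not run out of vertices, and a moment estimate showing that collisions (short cycles) in the first $r$ levels are of lower order, together with a martingale/Chernoff bound on the branching process, gives that whp each coordinate of $N_r(v)$ lies within small relative error of $((1-c)PQ)^re_{\sigma_v}$. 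Next, because $E$ and $G\setminus E$ arise from the same $\mathrm{Bernoulli}(Q_{ij}/n)$ draw split by an independent coin, conditionally on $G\setminus E$ the remaining pairs lie in $E$ independently with probability $\frac{cQ_{ij}/n}{1-(1-c)Q_{ij}/n}=(1+O(1/n))\,cQ_{ij}/n$; a Bernstein bound across $N_{r[G\setminus E]}(v)\times N_{r'[G\setminus E]}(v')$ then gives (b), with the additive error on $N_{r,r'[E]}(v\cdot v')$ controlled by $|N_r(v)|$, $|N_{r'}(v')|$ and $c/n$.

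For (c), the system $\sum_i ((1-c)\lambda_i)^{r+r'+j+1}z_i=\frac{(1-c)n}{c}N_{r+j,r'[E]}(v\cdot v')$, $0\le j<\eta$, is a Vandermonde system in $(z_i)$ scaled by $\mathrm{diag}(((1-c)\lambda_i)^{r+r'+1})$; inverting it (bounding the Vandermonde inverse in terms of the gaps $|\lambda_i-\lambda_j|$) turns the error bound from (b) into an additive error on each $z_i(v\cdot v')$, and the diagonal scaling makes the $\lambda_\eta$-coordinate the bottleneck: there the ``signal'' $\frac{(1-c)^{r+r'}}{n}|\lambda_\eta|^{r+r'+1}$ must dominate the propagated fluctuation, which scales like $\sqrt{((1-c)\lambda_1)^{r+r'}/n}$ up to the profile error from (a). With $r+r'\approx(1+\epsilon/3)\log n/\log((1-c)\lambda_1)$ this margin is positive precisely under the hypotheses $\rho=|\lambda'|^2/\lambda>4$, $\lambda^7<(\lambda')^8$, $4\lambda^3<(\lambda')^4$ and the stated lower bound on $\epsilon$, yielding an additive error $\le x$ on every $z$ the algorithm uses, with probability $1-o(1)$ for a fixed comparison. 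Given $\le x$ accuracy, correctness of the decisions is purely algebraic: $P_{W_i}(e_a-e_b)\cdot P^{-1}P_{W_i}(e_a-e_b)\ge 0$ with equality for all $i$ iff $a=b$, plus the monotonicity inequality displayed just before the vertex-classification subroutine, and the slacks $5(2x(\min p_j)^{-1/2}+x^2)$ and $\frac{19}{3}(2x(\min p_j)^{-1/2}+x^2)$ absorb the accumulated error.

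For (d)--(e): with $m=\ln(4k)/\min_i p_i$ the probability that some community is absent from $v[\,]$ is at most $k(1-\min_i p_i)^m\le ke^{-\ln(4k)}=\tfrac14$, so with probability $\ge\tfrac34$ all communities appear; on that event together with the concentration events, a remaining vertex is misclassified only through a bad neighbourhood count, whose probability --- summed as a geometric series over depths $r''<r'$ and over communities --- is at most $2ke^{-\frac{x^2(\lambda')^2\min p_i}{16\lambda k^{3/2}((\min p_i)^{-1/2}+x)}}\big/\big(1-e^{-\frac{x^2(\lambda')^2\min p_i}{16\lambda k^{3/2}((\min p_i)^{-1/2}+x)}((\frac{(\lambda')^4}{4\lambda^3})-1)}\big)\le y$; hence {\tt Unreliable\_graph\_classification} returns a $(1-y)$-accurate labelling with probability $>\tfrac12$. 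Running it $T(n)=\ln n$ times, a Chernoff bound shows that a strict majority of runs are good whp; discarding any run differing from more than half the others by more than the stated threshold, and stitching the survivors via the best-overlap bijection (Step~(3) of {\tt Sphere-comparison}), then gives a labelling agreeing with $\sigma$ on a $1-2y$ fraction of vertices. For the runtime, $|N_{r''}(v)|=O(((1-c)\lambda_1)^{r''})$, so the exploration costs $O(m\,n^{1-\epsilon/3})=O(n)$ for the sampled vertices and $O(n\cdot n^{2\epsilon/3})=O(n^{1+2\epsilon/3})$ over all vertices up to depth $r'$, the $O(1)$-size linear solves are negligible, and over $\ln n$ repetitions the total is $O(n^{1+2\epsilon/3}\ln n)=O(n^{1+\epsilon})$. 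I expect the main obstacle to be parts (a)--(c) together: keeping the sub-dominant eigenvalue coefficient above the errors accumulated over $\sim\log n$ layers of neighbourhood growth and through the ill-conditioned solve, uniformly over all relevant vertex pairs --- this is exactly what the three spectral inequalities and the lower bound on $\epsilon$ are calibrated to ensure.
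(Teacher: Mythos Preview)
Your five-part outline mirrors the paper's proof structure closely: the paper too establishes neighbourhood concentration (Lemmas~1--4), the bilinear concentration (Lemma~5), correctness of the linear solve and threshold tests (the Decomposition Equation Lemma and Lemmas~7--8), the success probability of a single run (Lemma~9), and the amplification and runtime count (Lemma~10 and the final one-paragraph proof). The parameter choices $m=\ln(4k)/\min_i p_i$, $T(n)=\ln n$, the geometric-series bound that produces the $y$ expression, and the $O(n^{1+2\epsilon/3}\ln n)=O(n^{1+\epsilon})$ runtime accounting are all exactly as in the paper.

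The one place where your plan differs in substance is part~(a), and this is also where it is loosest. You propose to couple the BFS with a multitype Poisson branching process and conclude that ``each coordinate of $N_r(v)$ lies within small relative error of $((1-c)PQ)^r e_{\sigma_v}$.'' But small relative error on the \emph{coordinates} of $N_r(v)$---each of order $\lambda_1^r$---does not suffice to resolve the component of $N_r(v)$ in the $\lambda_\eta$-eigenspace, which is only of order $|\lambda_\eta|^r\ll\lambda_1^r$: an additive error $\epsilon\lambda_1^r$ per coordinate can survive projection onto any eigenspace and swamp the subdominant signal. The paper does not use a coupling here; instead it introduces the notion of an $(R,x)$-\emph{good} vertex, one for which the \emph{one-step} deviation obeys
\[
|w\cdot N_{r+1}(v)-w\cdot PQ\,N_r(v)|\;\le\;\tfrac{x|\lambda_\eta|}{2}\Bigl(\tfrac{\lambda_\eta^2}{2\lambda_1}\Bigr)^{r}
\qquad\text{for all }0\le r<R\text{ and all }w\text{ with }w\cdot Pw=1.
\]
The rate $(\lambda_\eta^2/(2\lambda_1))^r$ is calibrated so that (i) it dominates the one-step standard deviation $\sim\lambda_1^{r/2}$, which is exactly where the condition $\lambda_\eta^4>4\lambda_1^3$ enters and whence the geometric series over $r$ in Lemma~3 gives the displayed $y$ formula; and (ii) when telescoped and divided by $\lambda_i^r$ for \emph{any} $i\le\eta$, the accumulated error in the $i$-th eigendirection stays bounded by $x$ (this is what feeds into the Decomposition Equation Lemma and delivers $|z_i-w_i(\{v\})\cdot P^{-1}w_i(\{v'\})|\le 2x(\min p_j)^{-1/2}+x^2+o(1)$). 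A branching-process or martingale argument \emph{can} be made to yield this, but only by tracking each eigenprojection separately at the specific rate above---not by a coordinatewise relative-error statement. Your parts~(b)--(e) are otherwise essentially the paper's argument; the second condition on $\epsilon$ corresponds to the paper's Lemma~4 (extending goodness from a fixed $R$ to depth $\sim\log n$), which you do not single out but would need.
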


We refer to Section \ref{main-res} for a less technical statement of the theorem. We next provide further details on the example provided in Section \ref{main-res}, in particular concerning the constants.  
\begin{corollary}\label{full-coro}
Consider the $k$-block symmetric case. In other words, $p_i=\frac{1}{k}$ for all $i$, and $Q_{i,j}$ is $\alpha$ if $i=j$ and $\beta$ otherwise. The vector whose entries are all $1$s is an eigenvector of $PQ$ with eigenvalue $\frac{\alpha+(k-1)\beta}{k}$, and every vector whose entries add up to $0$ is an eigenvector of $PQ$ with eigenvalue $\frac{\alpha-\beta}{k}$. So, $\lambda=\frac{\alpha+(k-1)\beta}{k}$ and $\lambda'=\frac{\alpha-\beta}{k}$
and 
\begin{align}
\rho > 4 \quad \Leftrightarrow \quad \frac{(a-b)^2}{4k(a+(k-1)b)} >4,
\end{align}
which is the signal-to-noise ratio appearing in the conjectures on the detection threshold for multiple blocks \cite{decelle,mossel-sbm}. We then have 
\begin{align}
&\min_{i,j,W\in \text{eigenspaces of PQ};P_W(e_i)\ne P_W(e_j)}|P_W(e_i-e_j)\cdot P^{-1}P_W(e_i-e_j)|/13\\
&=|(e_1-e_2)\cdot P^{-1}(e_1-e_2)|/13=2k/13.
\end{align}
So, \[0<x<\sqrt{15k/13}-\sqrt{k}\] and as long as $k(\alpha+(k-1)\beta)^7<(\alpha-\beta)^8$ and $4k(\alpha+(k-1)\beta)^3<(\alpha-\beta)^4$, there exists an algorithms that detects communities, and the accuracy is $$1-O(e^{-c(\alpha-\beta)^2/(k(\alpha+(k-1)\beta))})$$ for sufficiently large $((\alpha-\beta)^2/(k(\alpha+(k-1)\beta)))$, where $c=x^2/16k^{7/2}(x+\sqrt{k})$. 
\end{corollary}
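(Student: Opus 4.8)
The plan is to derive the corollary as a direct specialization of Theorem \ref{thm1}; all the work lies in computing the spectrum and eigenspace projections of $PQ$ in the symmetric case and then checking that the parameter windows of Theorem \ref{thm1} collapse to the two displayed hypotheses.

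\textbf{Spectral data.} First I would write $P=\tfrac1k I$ and $Q=(\alpha-\beta)I+\beta\mathbf{1}\mathbf{1}^{\mathsf T}$, so that $PQ=\tfrac1k\bigl((\alpha-\beta)I+\beta\mathbf{1}\mathbf{1}^{\mathsf T}\bigr)$. Since $\mathbf{1}\mathbf{1}^{\mathsf T}$ acts as $k$ on $W_1:=\mathrm{span}(\mathbf{1})$ and as $0$ on $W_2:=\mathbf{1}^{\perp}=\{v:\sum_i v_i=0\}$, the matrix $PQ$ has eigenvalue $\tfrac{\alpha+(k-1)\beta}{k}$ on $W_1$ and eigenvalue $\tfrac{\alpha-\beta}{k}$ (multiplicity $k-1$) on $W_2$. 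When $\alpha\neq\beta$ — equivalently, when $Q$ has no two equal rows, which is the hypothesis of Theorem \ref{thm1} — these are distinct and satisfy $\lambda>|\lambda'|>0$, so indeed $\lambda=\tfrac{\alpha+(k-1)\beta}{k}$ is the largest eigenvalue, $\lambda'=\tfrac{\alpha-\beta}{k}$ is the nonzero eigenvalue of least magnitude, and $\rho=|\lambda'|^2/\lambda=\tfrac{(\alpha-\beta)^2}{k(\alpha+(k-1)\beta)}$.

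\textbf{The projection quantity and the parameter windows.} Next I would identify the minimum of $|P_W(e_i-e_j)\cdot P^{-1}P_W(e_i-e_j)|$ appearing in the $x$-constraint of Theorem \ref{thm1}. On $W_1$ the orthogonal projection is $\tfrac1k\mathbf{1}\mathbf{1}^{\mathsf T}$, so $P_{W_1}(e_i)=\tfrac1k\mathbf{1}$ for every $i$ and $W_1$ is excluded from the minimum (it never satisfies $P_W(e_i)\neq P_W(e_j)$); on $W_2$ the projection is $I-\tfrac1k\mathbf{1}\mathbf{1}^{\mathsf T}$, so $P_{W_2}(e_i-e_j)=e_i-e_j$, and with $P^{-1}=kI$ we get $(e_i-e_j)\cdot P^{-1}(e_i-e_j)=k\|e_i-e_j\|_2^2=2k$ for every $i\neq j$. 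Hence the minimum is $2k$, the upper endpoint $-(\min p_i)^{-1/2}+\sqrt{1/\min p_i+2k/13}$ becomes $\sqrt{15k/13}-\sqrt k$, and since the competing bound $\tfrac{\lambda k}{|\lambda'|\min p_i}=\tfrac{k^2\lambda}{|\lambda'|}\geq k^2$ exceeds it, the admissible range for $x$ is exactly $0<x<\sqrt{15k/13}-\sqrt k$. I would then substitute $\lambda,\lambda'$ into the $\epsilon$-window of Theorem \ref{thm1}: $3\ln(\lambda^2/(\lambda')^2)<\ln((\lambda')^2/\lambda)$ reduces to $\lambda^7<(\lambda')^8$, i.e.\ $k(\alpha+(k-1)\beta)^7<(\alpha-\beta)^8$, and $3\ln(2\lambda^2/(\lambda')^2)<\ln(2\lambda^3/(\lambda')^2)$ reduces to $4\lambda^3<(\lambda')^4$, i.e.\ $4k(\alpha+(k-1)\beta)^3<(\alpha-\beta)^4$; moreover this latter hypothesis already forces $\rho>4$ (divide $4k(\alpha+(k-1)\beta)^3<(\alpha-\beta)^4$ by $(\alpha+(k-1)\beta)^2$ and use $\alpha+(k-1)\beta\geq|\alpha-\beta|$). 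So under the two stated hypotheses the interval for $\epsilon$ is nonempty, and fixing any valid $\epsilon$ gives the $O(n^{1+\epsilon})$ runtime of {\tt Sphere-comparison}.

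\textbf{Accuracy and the main point of care.} Finally I would simplify the accuracy bound. Theorem \ref{thm1} gives accuracy $\geq 1-2y$ for any $y$ above $\tfrac{2ke^{-E}}{1-\exp(-E((\lambda')^4/(4\lambda^3)-1))}$ with $E=\tfrac{x^2(\lambda')^2\min p_i}{16\lambda k^{3/2}((\min p_i)^{-1/2}+x)}$; plugging $\min p_i=1/k$ and $(\lambda')^2/\lambda=\rho$ yields $E=\tfrac{x^2\rho}{16k^{5/2}(\sqrt k+x)}\geq c\rho$ with $c=\tfrac{x^2}{16k^{7/2}(x+\sqrt k)}$. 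The hypothesis $4\lambda^3<(\lambda')^4$ makes $(\lambda')^4/(4\lambda^3)-1>0$, so as $\rho=\tfrac{(\alpha-\beta)^2}{k(\alpha+(k-1)\beta)}\to\infty$ the denominator tends to $1$, the threshold becomes $O(e^{-c\rho})$, and in particular it eventually drops below $\tfrac{\min p_i}{4\ln(4k)}$, so a valid $y$ exists; taking $y$ just above the threshold gives accuracy $1-O\bigl(e^{-c(\alpha-\beta)^2/(k(\alpha+(k-1)\beta))}\bigr)$. The only thing requiring care — and the nearest thing to an obstacle — is checking that the $\epsilon$-, $x$-, and $y$-windows of Theorem \ref{thm1} are simultaneously nonempty; but the computations above show each of them collapses exactly to one of the two displayed hypotheses (or is implied by them), so the corollary is genuinely Theorem \ref{thm1} with the symmetric spectral data plugged in.
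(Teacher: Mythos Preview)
Your proposal is correct and follows exactly the approach the paper takes: the corollary is nothing more than Theorem \ref{thm1} with the symmetric spectral data substituted, and the paper itself does not give a separate proof beyond the computations embedded in the corollary's statement. Your derivations of the eigenvalues, the projection quantity $2k$, the $x$-range, the reduction of the $\epsilon$-window to the two displayed inequalities, and the simplification of the accuracy exponent all match (and are in fact more carefully justified than) what appears in the paper.
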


Considering the way $\epsilon$, $x$, and $y$ scale when $Q$ is multiplied by a scalar yields the following corollary.

\begin{corollary}\label{partial-delta}
For any $k\in \mathbb{Z}$, $p\in (0,1)^k$ with $|p|=1$, and symmetric  matrix $Q$ with no two rows equal, there exist $\epsilon(\delta)=O(1/\ln(\delta))$ and constant $c_1>0$ such that for all sufficiently large $\delta$, {\tt Sphere-comparison} detects communities in graphs drawn from $\gss(n,p,\delta Q)$ with accuracy at least $1-O_\delta(e^{-c_1\delta})$ in $O_n(n^{1+\epsilon(\delta)})$ time for all sufficiently large $n$.
\end{corollary}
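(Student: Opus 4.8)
The plan is to deduce this from Theorem \ref{thm1} applied to the ensemble $\gss(n,p,\delta Q)$, by tracking how the three free parameters $\epsilon$, $x$, $y$ in that theorem must behave as functions of $\delta$. The key structural observation is that replacing $Q$ by $\delta Q$ replaces $PQ$ by $\delta PQ$: this leaves $P=\diag(p)$ and every eigenspace $W$ of $PQ$ unchanged, and simply multiplies every eigenvalue by $\delta$, so in particular $\lambda\mapsto\delta\lambda$ and $\lambda'\mapsto\delta\lambda'$ while the ratio $\lambda/|\lambda'|$ and all the quantities $P_W(e_i-e_j)\cdot P^{-1}P_W(e_i-e_j)$ are invariant under the rescaling.

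First I would fix $x$. The window for $x$ in Theorem \ref{thm1} depends on $Q$ only through $\lambda k/(|\lambda'|\min p_i)$ and through $\min_{i,j,W}|P_W(e_i-e_j)\cdot P^{-1}P_W(e_i-e_j)|$, both scale-invariant, and this window is always non-empty because $\sqrt{1/\min p_i+(\text{positive})}>(\min p_i)^{-1/2}$. So I pick a constant $x=x_0>0$ (say half the upper endpoint of its range), independent of $\delta$. Next I would handle $\epsilon$: its lower bound $3\max[\ln(\lambda^2/(\lambda')^2)/\ln((\lambda')^2/\lambda),\,\ln(2\lambda^2/(\lambda')^2)/\ln(2\lambda^3/(\lambda')^2)]$ has constant numerators under the rescaling, whereas the denominators become $\ln(\delta(\lambda')^2/\lambda)$ and $\ln(2\delta\lambda^3/(\lambda')^2)$, which grow like $\ln\delta$. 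Hence this lower bound is $O(1/\ln\delta)$; for $\delta$ large it lies below $1$ and I may take $\epsilon(\delta)=O(1/\ln\delta)$, giving running time $O_n(n^{1+\epsilon(\delta)})$ as claimed.

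Finally I would handle $y$. With $x=x_0$ fixed, write $A=\frac{x^2(\lambda')^2\min p_i}{16\lambda k^{3/2}((\min p_i)^{-1/2}+x)}$ and $B=\frac{(\lambda')^4}{4\lambda^3}$; under $Q\mapsto\delta Q$ one has $A\mapsto \delta A$ (the numerator gains a factor $\delta^2$, the denominator $\delta$) and $B\mapsto\delta B$. Thus the lower bound on $y$ becomes $2k e^{-\delta A}/(1-e^{-\delta A(\delta B-1)})$, whose denominator tends to $1$ once $\delta B>1$, so this lower bound is $O(e^{-c_1\delta})$ with $c_1:=A=A(p,Q,x_0)>0$; the upper bound $\min p_i/(4\ln(4k))$ is a fixed positive constant. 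Hence for all sufficiently large $\delta$ the $y$-window is non-empty and I may choose $y=O(e^{-c_1\delta})$. Theorem \ref{thm1} then gives, with probability $1-o(1)$, accuracy at least $1-2y=1-O_\delta(e^{-c_1\delta})$ in time $O_n(n^{1+\epsilon(\delta)})$, for all sufficiently large $n$, which is the statement.

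The argument is essentially bookkeeping; there is no genuine obstacle beyond checking that the three parameter-windows of Theorem \ref{thm1} are simultaneously non-empty for all large $\delta$ and that the rate constant $c_1$ can be extracted uniformly in $n$. The only point needing a little care is that $x_0$ must be chosen independent of $\delta$ while still leaving the $y$-window open for large $\delta$; since every constraint on $x$ is scale-invariant and the $y$-window only widens as $\delta\to\infty$, this is harmless.
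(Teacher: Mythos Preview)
Your proposal is correct and follows exactly the approach the paper indicates: the paper simply says ``Considering the way $\epsilon$, $x$, and $y$ scale when $Q$ is multiplied by a scalar yields the following corollary,'' and you have carried out precisely that bookkeeping, correctly observing that the $x$-window is scale-invariant, the $\epsilon$-lower-bound shrinks like $1/\ln\delta$ via the denominators, and $A\mapsto\delta A$, $B\mapsto\delta B$ in the $y$-bound so that the lower endpoint decays like $e^{-\delta A}$.
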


\begin{corollary}
For any $k\in \mathbb{Z}$, $p\in (0,1)^k$ with $|p|=1$, symmetric  matrix $Q$ with no two rows equal, $b>0$, and $1>\epsilon>0$, there exists $c>0$, such that {\tt Sphere-comparison} detects communities in graphs drawn from $\gss(n,p,cQ)$ with accuracy at least $1-b$ in $O(n^{1+\epsilon})$ time for sufficiently large $n$.
\end{corollary}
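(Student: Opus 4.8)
\noindent
The plan is to deduce this statement directly from Corollary~\ref{partial-delta} by choosing the amplification factor large enough. Recall that Corollary~\ref{partial-delta} supplies a function $\epsilon(\delta)=O(1/\ln(\delta))$ and a constant $c_1>0$ (depending only on $k$, $p$, $Q$) such that, for all sufficiently large $\delta$, {\tt Sphere-comparison} run on $\gss(n,p,\delta Q)$ outputs a labelling agreeing with the planted one on a fraction at least $1-O_\delta(e^{-c_1\delta})$ of the vertices with probability $1-o(1)$, while terminating in $O_n(n^{1+\epsilon(\delta)})$ time for $n$ large. Since the target model is $\gss(n,p,cQ)$, the point is simply to identify $c$ with an appropriate $\delta$.

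First I would use the decay of the accuracy defect: because $e^{-c_1\delta}\to 0$ as $\delta\to\infty$, there is a threshold $\delta_b$ with $O_\delta(e^{-c_1\delta})\le b$ for all $\delta\ge\delta_b$. Next, since $\epsilon(\delta)=O(1/\ln(\delta))\to 0$, there is a threshold $\delta_\epsilon$ with $\epsilon(\delta)\le\epsilon$ for all $\delta\ge\delta_\epsilon$. Setting $c:=\max(\delta_b,\delta_\epsilon,\delta_0)$, where $\delta_0$ is the ``sufficiently large'' threshold implicit in Corollary~\ref{partial-delta}, gives a fixed scalar (depending only on $k,p,Q,b,\epsilon$) for which {\tt Sphere-comparison} on $\gss(n,p,cQ)$ classifies at least a $(1-b)$ fraction of vertices correctly with probability $1-o(1)$; and since $\epsilon(c)\le\epsilon$ implies $O_n(n^{1+\epsilon(c)})\subseteq O_n(n^{1+\epsilon})$, the running time is $O(n^{1+\epsilon})$ for all sufficiently large $n$, as required.

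There is essentially no genuine obstacle here; the only point that needs care is bookkeeping of the various ``for sufficiently large $n$'' clauses in Corollary~\ref{partial-delta} and, underneath it, in Theorem~\ref{thm1}. Once $c$ is frozen, the model parameters $(k,p,cQ)$ and the algorithm's internal parameters (the choices of $x$, $y$, $\epsilon$, the vertex-sample size, the edge-split probability, and the number of repetitions) all become concrete constants, so a single threshold $n_0=n_0(k,p,Q,b,\epsilon)$ suffices for everything. The mechanism that makes the amplification work is the one recorded in Theorem~\ref{thm1}: multiplying $Q$ by $c$ scales both $\lambda$ and $\lambda'$ by $c$, hence scales the SNR $\rho=|\lambda'|^2/\lambda$ linearly in $c$, while the constant $C(p,cQ)$ in the accuracy bound is independent of $c$; therefore the exponent $C\rho/(16k)$ in the accuracy expression grows linearly in $c$ and the admissible $\epsilon$ can be pushed down like $1/\ln(c)$, which is precisely the content of Corollary~\ref{partial-delta} that we invoke.
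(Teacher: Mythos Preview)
Your proposal is correct and follows essentially the same route as the paper: the paper treats this corollary as an immediate consequence of the scaling behaviour encapsulated in Corollary~\ref{partial-delta} (itself obtained from Theorem~\ref{thm1} by noting how $\epsilon$, $x$, and $y$ scale when $Q$ is multiplied by a scalar), and you make that deduction explicit by choosing $c$ large enough that both $O_\delta(e^{-c_1\delta})\le b$ and $\epsilon(\delta)\le\epsilon$. There is no meaningful difference in strategy.
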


If instead of having constant average degree, one has an average degree which increases as $n$ increases, one can slowly reduce $b$ and $\epsilon$ as $n$ increases, leading to the following corollary.

\begin{corollary}
For any $k\in \mathbb{Z}$, $p\in [0,1]^k$ with $|p|=1$, symmetric  matrix $Q$ with no two rows equal, and $c(n)$ such that $c=\omega(1)$, {\tt Sphere-comparison} detects the communities with accuracy $1-o(1)$ in $\gss(n,p,c(n)Q)$ and runs in $o(n^{1+\epsilon})$ time for all $\epsilon>0$.
\end{corollary}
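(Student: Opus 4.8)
The plan is to obtain this by diagonalizing Corollary~\ref{partial-delta} (equivalently, Theorem~\ref{thm1}): since $\gss(n,p,c(n)Q)$ carries more and more signal as $n$ grows, we run {\tt Sphere-comparison} with a target error $b_n\to0$ and a time exponent $\epsilon_n\to0$ that decrease just slowly enough for the hypotheses of Corollary~\ref{partial-delta} to hold at every $n$. First reduce to $p\in(0,1)^k$: an index with $p_i=0$ is an empty community, which we delete, and if two surviving communities then have equal profiles we merge them, leaving a model with all priors positive and $Q$ having no two equal rows.

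By Corollary~\ref{partial-delta}, for each integer $m\ge1$ there are a scale $\delta_m$ and a parameter setting of {\tt Sphere-comparison} such that, on $\gss(n,p,\delta_m Q)$, the algorithm recovers at least a $(1-\tfrac1m)$-fraction of the vertices correctly with probability $1-o(1)$ and in time $O(n^{1+1/m})$, valid for all $n\ge N_m$: we take $\delta_m$ large enough that the error bound $O_{\delta_m}(e^{-c_1\delta_m})$ and the time--exponent bound $O(1/\ln\delta_m)$ of that corollary both fall below $\tfrac1m$ (e.g.\ $\delta_m$ geometric in $m$), we take $N_m$ non-decreasing, and we enlarge $N_m$ — harmless, as it only weakens the hypothesis — so that moreover $N_m\ge\delta_m^{\,m}$. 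Note $\delta_m\to\infty$. Given $G\sim\gss(n,p,c(n)Q)$, put
\[
L(n)\ :=\ \min\big(\,\max\{m:\delta_m\le c(n)\}\,,\ \max\{m:N_m\le n\}\,\big),
\]
which depends only on $n$, $c(n)$ and the fixed sequences $(\delta_m),(N_m)$, is well defined for $n$ large, and tends to infinity (each of the two maxima does: the first since $c=\omega(1)$ and $\delta_m\uparrow\infty$, the second since each $N_m$ is finite). By construction $\delta_{L(n)}\le c(n)$, $n\ge N_{L(n)}$, and $\delta_{L(n)}\le n^{1/L(n)}=n^{o(1)}$.

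Now detect the communities of $G$ by: (a) forming $\tilde G$ from $G$ by retaining each edge independently with probability $\delta_{L(n)}/c(n)\le1$, so that $\tilde G\sim\gss(n,p,\delta_{L(n)}Q)$ and the planted partition is unchanged; (b) running {\tt Sphere-comparison} on $\tilde G$ with the scale-$\delta_{L(n)}$ parameters from Corollary~\ref{partial-delta}. Because $n\ge N_{L(n)}$, that corollary applies verbatim at scale $\delta_{L(n)}$ and returns, with probability $1-o(1)$, a labelling that agrees with the truth on at least a $(1-\tfrac1{L(n)})$-fraction of the vertices, in time $n^{1+o(1)}$: indeed $\delta_{L(n)}=n^{o(1)}$ keeps the $\delta$-dependent running-time constant (polynomial in the scale) down to $n^{o(1)}$, the exponent $O(1/\ln\delta_{L(n)})$ tends to $0$, and the subsampling adds only $O(n)$. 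As $L(n)\to\infty$ the accuracy is $1-o(1)$, and since $1/L(n)\to0$ and the exponent above tends to $0$, the running time is $o(n^{1+\epsilon})$ for every fixed $\epsilon>0$.

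The only delicate point — and the reason for both the diagonalization and the subsampling — is the interchange of limits: Corollary~\ref{partial-delta} and Theorem~\ref{thm1} are stated for a \emph{fixed} block model, with a ``for all sufficiently large $n$'' clause whose threshold may depend on the model, whereas the model $\gss(n,p,c(n)Q)$ here varies with $n$. The subsampling reduces each instance to a genuine $\gss$ at the controlled, slowly growing scale $\delta_{L(n)}$, and defining $L(n)$ as the minimum of ``largest level whose scale $\delta_m$ fits below the available signal $c(n)$'' and ``largest level whose threshold $N_m$ fits below $n$'' simultaneously forces the subsampling probability to be at most $1$ and $n$ to be past the relevant threshold, while still guaranteeing $L(n)\to\infty$ no matter how fast $(N_m)$ grows.
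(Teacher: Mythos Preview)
Your argument is essentially the diagonalization the paper gestures at in the single sentence preceding this corollary (``one can slowly reduce $b$ and $\epsilon$ as $n$ increases''), and you carry it out far more carefully than the paper does. The subsampling step is a clean device that the paper does not mention: rather than arguing that the analysis of Theorem~\ref{thm1} survives a model varying with $n$, you pull the signal down to a controlled scale $\delta_{L(n)}$ and invoke the fixed-scale corollary pointwise. This is a legitimate and arguably cleaner way to handle the interchange-of-limits issue you correctly flag.

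There is one small imprecision worth tightening. When you invoke Corollary~\ref{partial-delta} at level $m$, you record the success probability as ``$1-o(1)$, valid for all $n\ge N_m$''; but that $o(1)$ is a function $g_m(n)\to0$ that may depend on $m$, and after diagonalizing you need $g_{L(n)}(n)\to0$, which does not follow automatically. The fix is the same enlargement trick you already use for the running-time constant: when choosing $N_m$, also require $N_m$ large enough that $g_m(n)\le 1/m$ for all $n\ge N_m$ (possible since $g_m\to0$). Then the success probability at step $n$ is at least $1-1/L(n)\to1$, and your conclusion goes through verbatim. With that adjustment the proof is correct and strictly more detailed than the paper's own treatment.
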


These corollaries are important as they show that if the entries of the connectivity matrix $Q$ are amplified by a coefficient growing with $n$, almost exact recovery is achieved by ({\tt Sphere-comparison}). 

\subsection{Proof of Theorem \ref{thm1}}
Proving Theorem \ref{thm1} will require establishing some terminology. First, let $\lambda_1,...,\lambda_h$ be the distinct eigenvalues of $PQ$, ordered so that $|\lambda_1|\ge|\lambda_2|\ge...\ge|\lambda_h|\ge 0$. Also define $\eta$ so that $\eta=h$ if $\lambda_h\ne 0$ and $\eta=h-1$ if $\lambda_h=0$. In addition to this, let $d$ be the largest sum of a column of $PQ$.
\begin{definition}
For any graph $G$ drawn from $\gss(n,p,Q)$ and any set of vertices in $G$, $V$, let $\overrightarrow{V}$ be the vector such that $\overrightarrow{V}_i$ is the number of vertices in $V$ that are in community $i$. Define $w_1(V)$, $w_2(V)$, ..., $w_h(V)$ such that $\overrightarrow{V}=\sum w_i(V)$ and $w_i(V)$ is an eigenvector of $PQ$ with eigenvalue $\lambda_i$ for each $i$.
\end{definition}

$w_1(V), ...,w_h(V)$ are well defined because $\mathbb{R}^k$ is the direct sum of $PQ$'s eigenspaces. The key intuition behind their importance is that if $V'$ is the set of vertices adjacent to vertices in $V$ then $\overrightarrow{V'}\approx PQ\overrightarrow{V}$, so $w_i(V')\approx PQ\cdot w_i(V)=\lambda_iw_i(V)$.

\begin{definition}
For any vertex $v$, let $N_r(v)$ be the set of all vertices with shortest path to $v$ of length $r$. If there are multiple graphs that $v$ could be considered a vertex in, let $N_{r[G']}(v)$ be the set of all vertices with shortest paths in $G'$ to $v$ of length $r$.
\end{definition}
We also typically refer to $\overrightarrow{N_{r[G']}(v)}$ as simply $N_{r[G']}(v)$, as the context will make it clear whether the expression refers to a set or vector. 

\begin{definition}
A vertex $v$ of a graph drawn from $\gss(n,p,Q)$ is $(R,x)$-good if for all $0\le r<R$ and $w\in \mathbb{R}^k$ with $w\cdot Pw=1$ \[|w\cdot N_{r+1}(v)-w\cdot PQN_r(v)|\le \frac{x\lambda_{\eta}}{2}\left(\frac{\lambda_{\eta}^2}{2\lambda_1}\right)^{r}\] and $(R,x)$-bad otherwise.
\end{definition}

Note that since any such $w$ can be written as a linear combination of the $e_i$, $v$ is $(R,x)$-good if $|e_i\cdot N_{r+1}(v)-e_i\cdot PQN_r(v)|\le \frac{x\lambda_{\eta}}{2}\left(\frac{\lambda_{\eta}^2}{2\lambda_1}\right)^{r}\sqrt{p_i/k}$ for all $1\le i\le k$ and $0\le r<R$.

\begin{lemma}
If $v$ is a $(R,x)$-good vertex of a graph drawn from $\gss(n,p,Q)$, then for every $0\le r\le R$, $|N_r(v)|\le \lambda_1^r\sqrt{k}((\min p_i)^{-1/2}+x)$.
\end{lemma}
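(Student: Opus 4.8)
The plan is to establish the slightly stronger bound $|N_r(v)|\le\lambda_1^r\big((\min_i p_i)^{-1/2}+x\big)$ for all $0\le r\le R$; since $\sqrt k\ge1$ this immediately implies the stated inequality. The first step is to recognize the natural norm to work in: writing $P=\diag(p)$ and $\|u\|_{P^{-1}}^2:=u\cdot P^{-1}u=\sum_i u_i^2/p_i$, the set of $w\in\mR^k$ with $w\cdot Pw=1$ is exactly the unit sphere of the norm dual to $\|\cdot\|_{P^{-1}}$. Hence, taking the supremum over such $w$ in the defining inequality of an $(R,x)$-good vertex turns it into the coordinate-free estimate
\begin{align*}
\left\|\overrightarrow{N_{r+1}(v)}-PQ\,\overrightarrow{N_r(v)}\right\|_{P^{-1}}\le\frac{x\lambda_\eta}{2}\left(\frac{\lambda_\eta^2}{2\lambda_1}\right)^r,\qquad 0\le r<R.
\end{align*}

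Second, I would record the operator bound $\|PQ\,u\|_{P^{-1}}\le\lambda_1\|u\|_{P^{-1}}$ for every $u\in\mR^k$. This is where a bit of care is needed: one conjugates by $P^{1/2}$ to the symmetric matrix $S:=P^{1/2}QP^{1/2}$, which is similar to $PQ$ and therefore has the same (necessarily real) eigenvalues, and whose operator norm equals its spectral radius $\lambda_1$ because $S$ has nonnegative entries (Perron--Frobenius; we may assume $Q\ne0$, so $\lambda_1>0$, the case $Q=0$ being trivial). Then $\|PQ\,u\|_{P^{-1}}^2=u\cdot QPQ\,u=(P^{-1/2}u)\cdot S^2(P^{-1/2}u)\le\lambda_1^2\|P^{-1/2}u\|_2^2=\lambda_1^2\|u\|_{P^{-1}}^2$.

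Combining these two facts with the triangle inequality for $\|\cdot\|_{P^{-1}}$ and setting $a_r:=\|\overrightarrow{N_r(v)}\|_{P^{-1}}$ gives the scalar recursion $a_{r+1}\le\lambda_1 a_r+\tfrac{x\lambda_\eta}{2}(\lambda_\eta^2/2\lambda_1)^r$ for $0\le r<R$, with initial value $a_0=\|e_{\sigma_v}\|_{P^{-1}}=p_{\sigma_v}^{-1/2}\le(\min_i p_i)^{-1/2}$. Unrolling and bounding the resulting geometric series by $\sum_{j\ge0}(\lambda_\eta^2/2\lambda_1^2)^j\le\sum_{j\ge0}2^{-j}=2$ (using $|\lambda_\eta|\le\lambda_1$) yields $a_r\le\lambda_1^r a_0+x\lambda_\eta\lambda_1^{r-1}\le\lambda_1^r(a_0+x)$. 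Finally, since $\overrightarrow{N_r(v)}$ has nonnegative entries, Cauchy--Schwarz together with $\sum_i p_i=1$ gives $|N_r(v)|=\sum_i\overrightarrow{N_r(v)}_i\le\|\overrightarrow{N_r(v)}\|_{P^{-1}}=a_r\le\lambda_1^r\big((\min_i p_i)^{-1/2}+x\big)$, which is the desired bound.

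The main obstacle is the operator-norm step: one must pass through the symmetrization $S=P^{1/2}QP^{1/2}$ so that the ``largest eigenvalue of $PQ$'' legitimately controls $\|PQ\,\cdot\,\|_{P^{-1}}$. Estimating $PQ$ more crudely, for instance by its largest column sum $d$, would replace $\lambda_1$ by a possibly strictly larger quantity and break the claimed bound. Everything else — identifying the right norm via duality, the triangle inequality, and summing the geometric series — is routine.
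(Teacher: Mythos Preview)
Your proof is correct and actually establishes a sharper bound (without the factor $\sqrt k$) than the lemma claims. The approach, however, is genuinely different from the paper's.

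The paper argues eigenspace by eigenspace: it decomposes $p=\sum_i w_i$ along the eigenspaces of $PQ$, bounds each $|(P^{-1}w_i)\cdot N_r(v)|$ separately using the goodness inequality and the fact that $PQw_i=\lambda_i w_i$, and then sums. The factor $\sqrt k$ appears precisely at that summation step, from Cauchy--Schwarz applied to $\sum_i\sqrt{w_i\cdot P^{-1}w_i}$ together with the identity $\sum_i w_i\cdot P^{-1}w_i=1$. Your argument bypasses the eigen-decomposition entirely by recognizing that the supremum over $w$ with $w\cdot Pw=1$ in the definition of goodness is exactly the $P^{-1}$-norm of the increment, and that $PQ$ is a contraction in this norm up to the factor $\lambda_1$ (via the symmetrization $S=P^{1/2}QP^{1/2}$). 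This makes the recursion a scalar one and the final Cauchy--Schwarz step $|N_r(v)|\le\|\overrightarrow{N_r(v)}\|_{P^{-1}}$ loses nothing, whereas the paper's eigenspace splitting costs the $\sqrt k$. Your route is both cleaner and tighter; the paper's has the minor advantage that the per-eigenspace bounds it derives along the way are reused elsewhere in the analysis.

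One small remark: for the operator-norm step you do not actually need Perron--Frobenius---symmetry of $S$ already gives $\|S\|_{\mathrm{op}}=\rho(S)=|\lambda_1|$. Perron--Frobenius is only needed to identify $|\lambda_1|$ with $\lambda_1$ (i.e.\ $\lambda_1\ge 0$), which you use when bounding $x\lambda_\eta\lambda_1^{r-1}\le x\lambda_1^r$ and when writing powers $\lambda_1^r$.
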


\begin{proof}
First, note that for any eigenvector of $PQ$, $w$, and $r<R$, \[|(P^{-1}w)\cdot N_{r+1}(v)-(P^{-1}w)\cdot PQN_r(v)|\le \frac{x\lambda_{\eta}}{2}\left(\frac{\lambda_{\eta}^2}{2\lambda_1}\right)^{r}\sqrt{w\cdot P^{-1}w}\] So, by the triangle inequality,
\begin{align*}
|(P^{-1}w)\cdot N_{r+1}(v)|&\le |(P^{-1} PQw)\cdot N_r(v)|+\frac{x\lambda_{\eta}}{2}\left(\frac{\lambda_{\eta}^2}{2\lambda_1}\right)^{r}\sqrt{w\cdot P^{-1}w}\\
&\le \lambda_1|(P^{-1}w)\cdot N_r(v)|+x\left(\frac{\lambda_1}{2}\right)^{r+1}\sqrt{w\cdot P^{-1}w}
\end{align*}

Thus, for any $r\le R$, it must be the case that 
\begin{align*}
|(P^{-1}w)\cdot N_r(v)|&\le \lambda_1^r|(P^{-1}w)\cdot N_0(v)|+\sum_{r'=1}^{r} \lambda_1^{r-r'}\cdot x\left(\frac{\lambda_1}{2}\right)^{r'}\sqrt{w\cdot P^{-1}w}\\
&\le \lambda_1^r\left (|w_{\sigma_v}/p_{\sigma_v}|+x\sqrt{w\cdot P^{-1}w}\right)
\end{align*}

Now, define $w_1$,..., $w_h$ such that $PQw_i=\lambda_iw_i$ for each $i$ and $p=\sum_{i=1}^h w_i$. For any $i,j$, 
\begin{align*}
\lambda_iw_i\cdot P^{-1}w_j&=(PQw_i)\cdot P^{-1}w_j\\
&=w_i\cdot P^{-1}PQw_j\\
&=\lambda_jw_i\cdot P^{-1}w_j
\end{align*}
If $i\ne j$, then $\lambda_i\ne \lambda_j$, so this implies that $w_i\cdot P^{-1} w_j=0$. It follows from this that
\begin{align*}
\sum_i w_i\cdot P^{-1} w_i&=\sum_{i,j} w_i\cdot P^{-1}w_j\\
&= \left(\sum_i w_i\right)\cdot P^{-1}\left(\sum_j w_j\right)\\
&=p\cdot P^{-1} p=1
\end{align*}

Also, for any $i$, it is the case that $|(w_i)_{\sigma_v}/p_{\sigma_v}|\le \sqrt{(w_i)_{\sigma_v}\cdot p^{-1}_{\sigma_v}\cdot (w_i)_{\sigma_v}}/\sqrt{p_{\sigma_v}}\le (\min p_i)^{-1/2}\sqrt{w_i\cdot P^{-1}w_i}$

Therefore, for any $r\le R$, we have that
\begin{align*}
|N_r(v)|&=|(P^{-1}p)\cdot N_r(v)|\\
&\le \sum_i |(P^{-1} w_i)\cdot N_r(v)|\\
&\le \lambda_1^r\sum_i |(w_i)_{\sigma_v}/p_{\sigma_v}|+\lambda_1^r x\sum_i \sqrt{w_i\cdot P^{-1}w_i}\\
&\le \lambda_1^r\sqrt{k}((\min p_i)^{-1/2}+x)
\end{align*}

\end{proof}

We will prove that for parameters satisfying the correct criteria, most vertices are good, but first we will need the following concentration result (see for example \cite{max-igal} page 19).
\begin{theorem}
Let $X_1,...,X_n$ be a sequence of independent random variables and $d,\sigma\in\mathbb{R}$ such that for all $i$, $|X_i-E[X_i]|<d$ with probability $1$ and $Var[X_i]\le\sigma^2$. Then for every $\alpha>0$, \[P\left(|\sum_{i=1}^n X_i-E\left[\sum_{i=1}^n X_i\right]|\ge \alpha n\right)\le 2e^{-nD(\frac{\delta+\gamma}{1+\gamma}||\frac{\gamma}{1+\gamma})}\] where $\delta=\alpha/d$, $\gamma=\sigma^2/d^2$, and $D(p||q)= p\ln(p/q)+(1-p)\ln((1-p)/(1-q))$.
\end{theorem}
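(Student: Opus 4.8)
The plan is to run the Cram\'er--Chernoff argument, but with a \emph{sharp} bound on the one-variable moment generating functions. The textbook Bennett quadratic bound only produces the exponent $\gamma\,\psi(\delta/\gamma)$ with $\psi(u)=(1+u)\ln(1+u)-u$, which is strictly smaller than the target $D\!\left(\frac{\delta+\gamma}{1+\gamma}\,\|\,\frac{\gamma}{1+\gamma}\right)$ (already at $\delta=\gamma=1$ one gets $2\ln2-1$ versus $\ln2$). The correct exponent is the large-deviation rate of a Binomial, and it is attained by the two-point distribution that is extremal under the constraints ``mean $0$, support in $[-d,d]$, variance at most $\sigma^2$''. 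The crux, and essentially the only delicate point, is to dominate each $E[e^{tY_i}]$ by the moment generating function of this extremal law, using a quadratic that \emph{osculates} $e^{ty}$ at the extremal point $-\gamma d$ rather than the usual majorant tangent at $0$.

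\textbf{Reductions and the exponential inequality.} We may assume $0<\alpha\le d$ (otherwise $|\sum_i X_i-E[\sum_i X_i]|\le nd<\alpha n$ and the probability is $0$), and, writing $\gamma=\sigma^2/d^2$, that $\gamma\le 1$: since $|X_i-E X_i|<d$ forces $\mathrm{Var}(X_i)<d^2$, the case $\gamma>1$ reduces to $\gamma=1$, using that $\gamma\mapsto D\!\left(\frac{\delta+\gamma}{1+\gamma}\,\|\,\frac{\gamma}{1+\gamma}\right)$ is non-increasing. Put $Y_i=X_i-E[X_i]$, so $E[Y_i]=0$, $|Y_i|<d$, and $E[Y_i^2]\le\sigma^2$. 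As $-Y_i$ satisfies the same hypotheses, a union bound over the two tails reduces the claim to $P\!\left(\sum_i Y_i\ge\alpha n\right)\le e^{-nD(\cdot\|\cdot)}$, with the factor $2$ supplied by the symmetric tail. For every $t>0$, Markov's inequality applied to $e^{t\sum_i Y_i}$ gives $P\!\left(\sum_i Y_i\ge\alpha n\right)\le e^{-t\alpha n}\prod_{i=1}^n E[e^{tY_i}]$.

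\textbf{The extremal moment generating function.} Let $Z$ have $P(Z=d)=\frac{\gamma}{1+\gamma}$ and $P(Z=-\gamma d)=\frac{1}{1+\gamma}$, so that $E[Z]=0$, $E[Z^2]=\gamma d^2=\sigma^2$, and $Z=-\gamma d+(1+\gamma)d\,B$ with $B\sim\mathrm{Bernoulli}\!\left(\frac{\gamma}{1+\gamma}\right)$. I claim $E[e^{tY_i}]\le E[e^{tZ}]$ for all $t>0$. Let $g$ be the quadratic polynomial determined by $g(-\gamma d)=e^{-\gamma td}$, $g'(-\gamma d)=t\,e^{-\gamma td}$, $g(d)=e^{td}$. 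An elementary convexity analysis — $g''-t^2e^{ty}$ is strictly decreasing, $g(\cdot)-e^{t\cdot}$ has a double root at $-\gamma d$ and a simple root at $d$, and the needed inequality $g''\ge t^2e^{-\gamma td}$ reduces after a substitution to $e^w\ge 1+w+\tfrac{w^2}{2}$ for $w\ge0$ — shows that $g''\ge0$ and $g(y)\ge e^{ty}$ on $[-d,d]$. Writing $g(y)=g(0)+g'(0)y+\tfrac12 g''(0)y^2$ and using $E[Y_i]=0$, $E[Y_i^2]\le\sigma^2$, $g''(0)\ge0$,
\[
E[e^{tY_i}]\ \le\ E[g(Y_i)]\ =\ g(0)+\tfrac12 g''(0)E[Y_i^2]\ \le\ g(0)+\tfrac12 g''(0)\sigma^2\ =\ E[g(Z)]\ =\ E[e^{tZ}],
\]
where the last two equalities hold because $Z$ has mean $0$, second moment exactly $\sigma^2$, and support in $\{-\gamma d,d\}$, on which $g$ agrees with $y\mapsto e^{ty}$. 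This is the heart of the proof: the familiar majorant $e^{ty}\le 1+ty+\frac{e^{td}-1-td}{d^2}y^2$ is tangent at the wrong point and would only recover the weaker Bennett exponent, so one really must produce the osculating quadratic tied to the extremal two-point law and check it dominates $e^{ty}$ on the whole interval $[-d,d]$ and is convex.

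\textbf{Recognizing the Binomial rate.} Combining the previous two displays, for every $t>0$,
\[
P\!\left(\sum_{i=1}^n Y_i\ge\alpha n\right)\ \le\ e^{-t\alpha n}E[e^{tZ}]^n\ =\ e^{-t\alpha n}E\big[e^{t\sum_{i}Z_i}\big],
\]
and $\sum_{i}Z_i=-n\gamma d+(1+\gamma)d\,K$ with $K\sim\bin\!\left(n,\frac{\gamma}{1+\gamma}\right)$. Since $\alpha=\delta d$, the event $\{\sum_{i}Z_i\ge\alpha n\}$ is exactly $\{K\ge pn\}$ with $p=\frac{\delta+\gamma}{1+\gamma}$, and after the substitution $s=t(1+\gamma)d$ the infimum over $t>0$ of the right-hand side above equals $\inf_{s>0}e^{-spn}E[e^{sK}]=e^{-nD(p\|\frac{\gamma}{1+\gamma})}$, the classical Chernoff bound for a Binomial upper tail (valid since $p\ge\frac{\gamma}{1+\gamma}$, i.e. $\delta\ge0$). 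Hence $P\!\left(\sum_{i}Y_i\ge\alpha n\right)\le e^{-nD(\frac{\delta+\gamma}{1+\gamma}\|\frac{\gamma}{1+\gamma})}$, and adding the symmetric estimate for $-\sum_{i}Y_i$ yields the asserted factor $2$.
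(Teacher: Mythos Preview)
The paper does not prove this theorem; it is quoted as a known concentration result with a citation (``see for example \cite{max-igal} page 19''), so there is no in-paper proof to compare against.

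Your proof is correct. The Cram\'er--Chernoff reduction and the identification of the optimized exponent with the Binomial KL rate $D\!\left(\frac{\delta+\gamma}{1+\gamma}\,\big\|\,\frac{\gamma}{1+\gamma}\right)$ are straightforward; the real content, as you note, is the pointwise bound $E[e^{tY_i}]\le E[e^{tZ}]$ via the osculating quadratic at $-\gamma d$. Your sketch of that step checks out: with $w=t(1+\gamma)d>0$ one gets $g''=2c=\frac{2e^{-\gamma td}(e^w-1-w)}{(1+\gamma)^2d^2}>0$, and $h=g-e^{t\cdot}$ has $h''$ strictly decreasing with $h''(-\gamma d)\ge 0$ (equivalently $e^w\ge 1+w+w^2/2$), so $h''$ has a unique sign change in $(-\gamma d,d)$, forcing $h'$ to be negative on $(-\infty,-\gamma d)$, positive on $(-\gamma d,y_1)$, negative on $(y_1,\infty)$ for some $y_1\in(-\gamma d,d)$, whence $h\ge 0$ on $(-\infty,d]\supset[-d,d]$. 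The reductions $\alpha\le d$ and $\gamma\le 1$ are handled correctly, and the monotonicity of $\gamma\mapsto D\!\left(\frac{\delta+\gamma}{1+\gamma}\,\big\|\,\frac{\gamma}{1+\gamma}\right)$ is a standard (and easily verified) fact. This is exactly the extremal-two-point-law argument behind the sharp form of Bennett's inequality, so your proof is both valid and essentially the standard one that the cited reference would contain.
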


Note that for any vertex $v\in G$, $r\in\mathbb{Z}$, and $1\le i\le k$, 
\[e_i\cdot N_r(v)=\sum_{v'\in G} I_{(v'\in N_r(v))} e_i\cdot \overrightarrow{\{v'\}}\] where $I_{(v'\in N_r(v))}$ is $1$ if $v'$ is in $N_r(v)$ and $0$ otherwise. Note that $$|e_i\cdot \overrightarrow{\{v'\}}|\le 1$$ for all $v'$, and $$E[ (I_{(v'\in N_r(v))} e_i\cdot \overrightarrow{\{v'\}})^2]\le \frac{d|N_{r-1}(v)|}{n}$$ because $I_{(v'\in N_r(v))}$ is nonzero with probability at most $d|N_{r-1}(v)|/n$. A vertex in community $\sigma$ that is not in $N_{r'}(v)$ for $r'<r$ is in $N_r(v)$ with a probability of approximately $1-e^{-e_\sigma QN_{r-1}(v)/n}$, and there are approximately $p_\sigma n-O(|\cup_{r'<r}N_{r'}(v)|)$ such vertices, so the expected value of $e_i\cdot N_r(v)$ differs from $e_i\cdot PQN_{r-1}(v)$ by a term which is at most proportional to $|N_{r-1}(v)|\cdot \sum_{i=0}^{r-1} |N_i(v)|/n$.

Theorem $2$ can be applied to this formula in order to bound the probability that a vertex will be bad, but first we need the following lemma.

\begin{lemma}
For any $0<\delta,\gamma<1$, $D(\frac{\delta+\gamma}{1+\gamma}||\frac{\gamma}{1+\gamma})>\frac{\delta^2(\gamma-\delta)}{2\gamma^2(1+\gamma)}$.
\end{lemma}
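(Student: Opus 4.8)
The plan is to make the binary relative entropy explicit and then replace each logarithm by a sharp quadratic lower bound. Set $p=\frac{\delta+\gamma}{1+\gamma}$ and $q=\frac{\gamma}{1+\gamma}$, so that $1-p=\frac{1-\delta}{1+\gamma}$, $1-q=\frac{1}{1+\gamma}$, and hence $p/q=1+\frac{\delta}{\gamma}$ while $(1-p)/(1-q)=1-\delta$. Thus
\[
(1+\gamma)\,D(p\|q)=(\delta+\gamma)\ln\!\Big(1+\tfrac{\delta}{\gamma}\Big)+(1-\delta)\ln(1-\delta),
\]
and since $1+\gamma>0$ it suffices to show this quantity strictly exceeds $\frac{\delta^2(\gamma-\delta)}{2\gamma^2}$.

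The second ingredient is two elementary one-variable inequalities: $\ln(1+x)\ge x-\frac{x^2}{2}$ for all $x\ge0$, and $(1-y)\ln(1-y)\ge -y+\frac{y^2}{2}$ for all $y\in[0,1)$. Both are proved the same way --- the difference vanishes at the origin and its derivative keeps a fixed sign (for the first it is $\frac{x^2}{1+x}\ge0$; for the second it is $-\ln(1-y)-y$, which is $\ge0$ since $-\ln(1-y)=\sum_{n\ge1}y^n/n\ge y$). I apply the first with $x=\delta/\gamma\ge0$ and the second with $y=\delta\in(0,1)$. No case analysis on whether $\delta<\gamma$ is needed: both bounds hold on the full ranges in play, and in any case when $\gamma\le\delta$ the right-hand side of the lemma is nonpositive while $D(p\|q)>0$ (as $p\ne q$ whenever $\delta>0$), so there the claim is immediate.

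Substituting these bounds into the displayed identity gives
\[
(1+\gamma)\,D(p\|q)\ \ge\ (\delta+\gamma)\Big(\frac{\delta}{\gamma}-\frac{\delta^2}{2\gamma^2}\Big)+\Big(-\delta+\frac{\delta^2}{2}\Big).
\]
Using the identity $(\delta+\gamma)(2\gamma-\delta)=2\gamma^2+\delta(\gamma-\delta)$ one checks that the first product equals $\delta+\frac{\delta^2(\gamma-\delta)}{2\gamma^2}$, so the entire right-hand side collapses to $\frac{\delta^2(\gamma-\delta)}{2\gamma^2}+\frac{\delta^2}{2}$. The surplus $\frac{\delta^2}{2}$ is strictly positive, which yields the strict inequality; dividing through by $1+\gamma$ gives $D(p\|q)>\frac{\delta^2(\gamma-\delta)}{2\gamma^2(1+\gamma)}$.

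None of the steps is difficult; the only point requiring care is the choice of the two quadratic lower bounds. A direct appeal to Pinsker's inequality, $D(p\|q)\ge 2(p-q)^2=\frac{2\delta^2}{(1+\gamma)^2}$, does not suffice: in the regime where $\gamma$ is small (with $\delta<\gamma$) the target exceeds that Pinsker bound by a factor of order $1/\gamma$, so one must keep the $1/\gamma$ and $1/\gamma^2$ behaviour coming from $\ln(1+\delta/\gamma)$ and its quadratic correction. The bounds above are calibrated precisely so that the algebra reproduces the target plus the manifestly nonnegative slack $\frac{\delta^2}{2}$.
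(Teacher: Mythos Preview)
Your proof is correct and follows essentially the same route as the paper: write out the binary relative entropy explicitly and bound each logarithm below by a simple elementary function. The only difference is in the second bound: the paper uses $\ln(1-\delta)>-\delta/(1-\delta)$, i.e.\ $(1-\delta)\ln(1-\delta)>-\delta$, which after the same algebra lands \emph{exactly} on the target $\frac{\delta^2(\gamma-\delta)}{2\gamma^2}$; you instead use the sharper $(1-\delta)\ln(1-\delta)\ge -\delta+\tfrac{\delta^2}{2}$, which produces the target plus the explicit slack $\tfrac{\delta^2}{2}$ and so makes the strict inequality more transparent. Your version also notes (correctly) that $\ln(1+x)\ge x-\tfrac{x^2}{2}$ holds for all $x\ge 0$, whereas the paper states it only for $0<x<1$ even though it applies it with $x=\delta/\gamma$, which can exceed $1$; your separate handling of the $\gamma\le\delta$ case plugs that small gap.
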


\begin{proof}
First, note that if $0<x<1$ then $\ln(1+x)=\sum_{i=1}^\infty \frac{(-1)^{i+1}x^i}{i}>x-\frac{x^2}{2}=x\cdot\frac{2-x}{2}$. Similarly, if $x<0$ then $\ln(1+x)=-\ln(1/(1+x))=-\ln(1-x/(1+x))>\frac{x}{1+x}$. So,
\begin{align*}
D\left(\frac{\delta+\gamma}{1+\gamma}||\frac{\gamma}{1+\gamma}\right)&=\frac{\delta+\gamma}{1+\gamma}\ln\left(\frac{\delta+\gamma}{\gamma}\right)+\frac{1-\delta}{1+\gamma}\ln(1-\delta)\\
&>\frac{\delta+\gamma}{1+\gamma}\cdot\frac{\delta}{\gamma}\cdot\frac{2\gamma-\delta}{2\gamma}+\frac{1-\delta}{1+\gamma}\cdot\frac{-\delta}{1-\delta}\\
&=\frac{\delta}{1+\gamma}\left[ \frac{(\delta+\gamma)(2\gamma-\delta)}{2\gamma^2}-1\right]\\
&=\frac{\delta}{1+\gamma}\cdot\frac{\gamma\delta-\delta^2}{2\gamma^2}\\
&=\frac{\delta^2(\gamma-\delta)}{2\gamma^2(1+\gamma)}.
\end{align*}
\end{proof}

\begin{lemma}
Let $k\in \mathbb{Z}$, $p\in (0,1)^k$ with $|p|=1$, $Q$ be a symmetric matrix such that $\lambda_{\eta}^4>4\lambda_1^3$, and $0<x<\frac{\lambda_1k}{\lambda_{\eta}\min p_i}$. Then there exists \[ y<2ke^{-\frac{x^2\lambda_{\eta}^2\min p_i}{16\lambda_1 k^{3/2}((\min p_i)^{-1/2}+x)}}/\left(1-e^{-\frac{x^2\lambda_{\eta}^2\min p_i}{16\lambda_1k^{3/2}((\min p_i)^{-1/2}+x)}\cdot((\frac{\lambda_{\eta}^4}{4\lambda_1^3})-1)}\right)\] and $R(n)= \omega(1)$  such that at least $1-y$ of the vertices of a graph drawn from $\gss(n,p,Q)$ are $(R(n),x)$-good with probability $1-o(1)$.
\end{lemma}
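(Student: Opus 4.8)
The plan is a first--moment argument upgraded to concentration via locality. First I would fix a small constant $\epsilon_0\in(0,1/2)$, to be pinned down at the end and depending only on $\lambda_1,\lambda_\eta$, and set $R=R(n):=\lfloor\epsilon_0\log_{\lambda_1}n\rfloor$; since the hypothesis $\lambda_\eta^4>4\lambda_1^3\ge 4\lambda_\eta^3$ forces $\lambda_\eta>4$, we get $\lambda_1\ge\lambda_\eta>4>1$, so $R=\omega(1)$ and $\lambda_1^R=O(n^{\epsilon_0})$. Write $q:=\lambda_\eta^4/(4\lambda_1^3)>1$, let $A:=\frac{x^2\lambda_\eta^2\min p_i}{16\lambda_1 k^{3/2}((\min p_i)^{-1/2}+x)}$ be the exponent in the statement, and $y_0:=2ke^{-A}/(1-e^{-A(q-1)})$ (finite and positive because $q>1$). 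I would bound the probability $p_{\mathrm{bad}}$ that a fixed vertex is $(R,x)$-bad by something tending to $2k\sum_{r\ge0}e^{-Aq^r}$, which is \emph{strictly} below $y_0$, and then show that the count of $(R,x)$-bad vertices concentrates about its mean.

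For the one--vertex estimate I would expose $G$ by breadth--first search from $v_0$ (conditioning on $\sigma_{v_0}$, with uniform bounds), let $\mathcal{F}_r$ be generated by all labels and the shells $N_0(v_0),\dots,N_r(v_0)$, and let $B_r$ be the event that $|e_i\cdot N_{r+1}(v_0)-e_i\cdot PQ\,N_r(v_0)|>t_r^{(i)}:=\frac{x\lambda_\eta}{2}(\frac{\lambda_\eta^2}{2\lambda_1})^r\sqrt{p_i/k}$ for some $i$ (so $v_0$ is $(R,x)$-good iff none of $B_0,\dots,B_{R-1}$ occurs). Decomposing by the first failing shell gives $p_{\mathrm{bad}}\le\sum_{r=0}^{R-1}\E[\mathbf 1_{\{v_0\text{ is }(r,x)\text{-good}\}}\,\pp(B_r\mid\mathcal{F}_r)]$. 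On the event that $v_0$ is $(r,x)$-good, the preceding Lemma gives $|N_{r'}(v_0)|\le\lambda_1^{r'}\sqrt k((\min p_i)^{-1/2}+x)$ for all $r'\le r$, so the explored region has size $O(n^{\epsilon_0})$ and, via the $P^{-1}$-orthogonal eigen-decomposition of the shell vector, $(QN_r(v_0))_i=O(\lambda_1^{r+1})$ with explicit constants. Writing $e_i\cdot N_{r+1}(v_0)=\sum_{v'}X_{v'}$ over unexposed $v'$ with $X_{v'}=\mathbf 1_{\{\sigma_{v'}=i\}}\mathbf 1_{\{v'\sim u\text{ for some }u\in N_r(v_0)\}}$, these are conditionally (on $\mathcal{F}_r$) independent $\{0,1\}$ variables with $\pp(X_{v'}=1\mid\mathcal{F}_r)=(1+o(1))(QN_r(v_0))_i/n$ uniformly, so $\E[e_i\cdot N_{r+1}(v_0)\mid\mathcal{F}_r]=(1+o(1))\,e_i\cdot PQ\,N_r(v_0)$ and the discrepancy between this conditional mean and the target $e_i\cdot PQ\,N_r(v_0)$ is $O(n^{2\epsilon_0-1})$, hence $\le\frac12 t_r^{(i)}$ for every $r\le R$ once $\epsilon_0$ is small enough (this is what fixes $\epsilon_0$). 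I would then apply the concentration theorem to $\sum_{v'}X_{v'}$ (with $n\to n_i=(1+o(1))p_i n$, the number of community-$i$ vertices not yet exposed, $d\to1$, $\sigma^2\to(1+o(1))(QN_r(v_0))_i/n$), followed by the lemma $D(\frac{\delta+\gamma}{1+\gamma}\,\|\,\frac{\gamma}{1+\gamma})\ge\frac{\delta^2(\gamma-\delta)}{2\gamma^2(1+\gamma)}$ with $\delta=t_r^{(i)}/2n_i$ and $\gamma=(1+o(1))(QN_r(v_0))_i/n$; using the identity $(\lambda_\eta^2/2\lambda_1)^{2r}/\lambda_1^r=q^r$, a computation gives $n_iD\ge Aq^r(1-o(1))$ (the degenerate sub-case $(PQN_r(v_0))_i<t_r^{(i)}$ is handled by a crude Chernoff bound giving a no--weaker estimate, and the hypothesis $x<\lambda_1 k/(\lambda_\eta\min p_i)$ keeps $t_r^{(i)}$ within range). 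Hence $\pp(B_r\mid\mathcal{F}_r)\le 2ke^{-Aq^r(1-o(1))}$ on the good event, so
\[ p_{\mathrm{bad}}\ \le\ 2k\sum_{r=0}^{R-1}e^{-Aq^r(1-o(1))}\ \longrightarrow\ 2k\sum_{r=0}^{\infty}e^{-Aq^r}\ =:\ y_0', \]
and since Bernoulli's inequality $q^r\ge 1+r(q-1)$ is \emph{strict} for $r\ge2$ when $q>1$, we get $y_0'<2k\sum_{r\ge0}e^{-A(1+r(q-1))}=y_0$. I would then fix any $y\in(y_0',y_0)$, so that $p_{\mathrm{bad}}<y$ for all large $n$.

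For the concentration step, let $X:=\#\{v:v\text{ is }(R,x)\text{-bad}\}$, so $\E X=n\,p_{\mathrm{bad}}$. Since ``$v$ is $(R,x)$-bad'' depends only on the depth-$R$ breadth--first exploration from $v$, which touches $O(\lambda_1^R)=O(n^{\epsilon_0})$ vertices, for $v\ne v'$ the indicators $\mathbf 1_{\{v\text{ bad}\}}$ and $\mathbf 1_{\{v'\text{ bad}\}}$ are independent unless these explorations collide (share a revealed vertex, or an edge joining them), which has probability $O(\lambda_1^{2R}/n)=O(n^{2\epsilon_0-1})$. Therefore $\Var(X)=O(n)+\sum_{v\ne v'}|\mathrm{Cov}(\mathbf 1_{\{v\text{ bad}\}},\mathbf 1_{\{v'\text{ bad}\}})|=O(n)+O(n^2\cdot n^{2\epsilon_0-1})=O(n^{1+2\epsilon_0})=o(n^2)$, using $\epsilon_0<1/2$. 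Chebyshev then gives $\pp(X\ge yn)\le\pp(|X-\E X|\ge n(y-p_{\mathrm{bad}}))\le\Var(X)/(n^2(y-p_{\mathrm{bad}})^2)=o(1)$, since $y-p_{\mathrm{bad}}\ge\frac12(y-y_0')>0$ for large $n$. Thus with probability $1-o(1)$ at least $(1-y)n$ vertices are $(R,x)$-good, with $y<y_0$ and $R=R(n)=\omega(1)$, as required.

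The hard part will be the exponent computation in the one--vertex bound: extracting exactly the constant $A$ together with the geometric factor $q^r=(\lambda_\eta^4/4\lambda_1^3)^r$. This requires the sharp bound on $(QN_r(v_0))_i$ in terms of a power of $\lambda_1$ (rather than of the largest column sum of $PQ$), obtained from the $P^{-1}$-orthogonality of $PQ$'s eigenspaces established above, together with a careful accounting of how the tolerance $t_r^{(i)}$ is split between the deviation of $e_i\cdot N_{r+1}(v_0)$ from its conditional mean and the $O(n^{2\epsilon_0-1})$ bias of that mean; requiring this accounting to hold uniformly over all $r\le R$ is precisely what dictates how small $\epsilon_0$ must be, and it is also the reason Markov's inequality alone does not suffice, forcing the variance estimate in the last step.
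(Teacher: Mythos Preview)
Your proposal is correct and follows essentially the same route as the paper: a per-vertex tail bound via the Bernstein-type concentration theorem combined with the lemma lower-bounding $D\bigl(\tfrac{\delta+\gamma}{1+\gamma}\,\big\|\,\tfrac{\gamma}{1+\gamma}\bigr)$, summed over shells to produce the geometric series in $q^r=(\lambda_\eta^4/4\lambda_1^3)^r$, followed by a second-moment/locality argument (the $R$-neighborhoods of two random vertices are disjoint with high probability) to pass from the per-vertex probability to a statement about the fraction of bad vertices.

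The one substantive difference is in how $R(n)$ is produced. The paper first carries out the entire argument for a \emph{fixed} constant $R$ (where all $o(1)$ terms are harmless) and then diagonalizes: for each $r$ choose $N_r$ so that the conclusion holds with probability $\ge 1-2^{-r}$ whenever $n>N_r$, and set $R(n)=\sup\{r:n>N_r\}$. You instead pick $R(n)=\lfloor\epsilon_0\log_{\lambda_1}n\rfloor$ from the start and check that the bias of the conditional mean stays below $\tfrac12 t_r^{(i)}$ uniformly in $r\le R(n)$, which is exactly the constraint that pins down $\epsilon_0$. Your route is a bit more explicit and in fact yields a concrete logarithmic $R(n)$ (anticipating the paper's next lemma), at the cost of having to track the uniformity carefully; the paper's diagonal argument sidesteps that uniformity check entirely but gives no quantitative growth rate for $R(n)$. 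One small correction: your claim ``forces $\lambda_\eta>4$'' should read $|\lambda_\eta|>4$, since $\lambda_\eta$ is only the smallest-magnitude nonzero eigenvalue; this does not affect anything, as all you need is $\lambda_1\ge|\lambda_\eta|>1$.
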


\begin{proof}
First, consider a constant R. Now, define $w_1$,..., $w_h$ such that $PQw_i=\lambda_iw_i$ for each $i$ and $p=\sum_{i=1}^h w_i$. If $v$ is $(r,x)$-good then by the same logic used in the proof of lemma $1$, each vertex of $G$ is in $N_{r+1}(v)$ with probability at most 
\begin{align*}
p\cdot QN_r(v)/n&=\frac{1}{n}\sum_i (P^{-1} w_i)\cdot PQN_r(v)\\
&\le \frac{1}{n}\lambda_1^{r+1}\sum_i |(w_i)_{\sigma_v}/p_{\sigma_v}|+\frac{1}{n}\lambda_1^{r+1}x\sum_i\sqrt{w_i\cdot P^{-1}w_i}\\
&\le \frac{1}{n}\lambda_1^{r+1}\sqrt{k}((\min p_i)^{-1/2}+x)
\end{align*}

 Recall that a vertex $v$ is $(R,x)$-good if (but not only if) \[|e_i\cdot N_{r+1}(v)-e_i\cdot PQN_r(v)|\le \frac{x\lambda_{\eta}}{2}\left(\frac{\lambda_{\eta}^2}{2\lambda_1}\right)^{r}\sqrt{p_i/k}\] for all $0\le r<R$ and $1\le i\le k$. This condition holds for $i$ and $r$ with probability at least $1-2e^{-nD(\frac{\delta+\gamma}{1+\gamma}||\frac{\gamma}{1+\gamma})}$, where $\delta\sim \frac{x\lambda_{\eta}}{2}(\frac{\lambda_{\eta}^2}{2\lambda_1})^{r}(\sqrt{p_i/kn^2})$ and $\gamma\sim \lambda_1^{r+1}\sqrt{k}((\min p_i)^{-1/2}+x)/n=\Omega(d|N_{r}(v)|/n)$ That means that in the limit as $n\to\infty$, $v$ is $(R,x)$-bad with probability at most 
\begin{align*}
\sum_{r=0}^{R-1}\sum_{i=1}^k 2e^{-nD(\frac{\delta_r+\gamma_r}{1+\gamma_r}||\frac{\gamma_r}{1+\gamma_r})}&\le \sum_{r=0}^{R-1} 2ke^{-n\frac{\delta^2(\gamma-\delta)}{2\gamma^2(1+\gamma)}}\\
&\le \sum_{r=0}^{R-1} 2ke^{-n\frac{\delta^2(\gamma/2+(\gamma/2-\delta))}{2\gamma^2}}\\
&\le \sum_{r=0}^{R-1} 2ke^{-n\frac{\delta^2}{4\gamma}}\\
&\le \sum_{r=0}^{R-1} 2ke^{-\frac{\frac{x^2\lambda_{\eta}^2}{4}\left(\frac{\lambda_{\eta}^2}{2\lambda_1}\right)^{2r}\min p_i/k}{4\lambda_1^{r+1}\sqrt{k}((\min p_i)^{-1/2}+x)}}\\
&\le \sum_{r=0}^\infty 2ke^{-\frac{x^2\lambda_{\eta}^2\min p_i}{16k\lambda_1\sqrt{k}((\min p_i)^{-1/2}+x)}\left(\frac{\lambda_{\eta}^4}{4\lambda_1^3}\right)^r}\\
&< \sum_{r=0}^\infty 2ke^{-\frac{x^2\lambda_{\eta}^2\min p_i}{16k^{3/2}\lambda_1((\min p_i)^{-1/2}+x)}\left(1+\left(\left(\frac{\lambda_{\eta}^4}{4\lambda_1^3}\right)-1\right)r\right)}\\
&=2ke^{-\frac{x^2\lambda_{\eta}^2\min p_i}{16\lambda_1k^{3/2}((\min p_i)^{-1/2}+x)}}/\left(1-e^{-\frac{x^2\lambda_{\eta}^2\min p_i}{16\lambda_1k^{3/2}((\min p_i)^{-1/2}+x)}\cdot((\frac{\lambda_{\eta}^4}{4\lambda_1^3})-1)}\right)
\end{align*}

Given random $v$ and $v'$, if $v'$ is $(R,x)$-good then there are at most $\sum_{r=0}^R \lambda_1^r\sqrt{k}((\min p_i)^{-1/2}+x)$ vertices in $\cup_{r=0}^R N_r(v')$. Note that $\cup_{r=0}^R N_r(v)$ is disjoint from any set of $\sum_{r=0}^R \lambda_1^r\sqrt{k}((\min p_i)^{-1/2}+x)$ vertices that were chosen independently of $v$ with probability $1-O(1/n)$, so \[|P[v \text{ is } (R,x)-\text{good}]-P[v \text{ is } (R,x)-\text{good}|v' \text{ is } (R,x)-\text{good}]|=O(1/n).\] That means that for any $$y<\sum_{r=0}^\infty 2ke^{-\frac{x^2\lambda_{\eta}^2\min p_i}{16k^{3/2}\lambda_1((\min p_i)^{-1/2}+x)}\left(1+\left(\left(\frac{\lambda_{\eta}^4}{4\lambda_1^3}\right)-1\right)r\right)},$$ at least $(1-y)n$ of the vertices in a graph drawn from $\gss(n,p,Q)$ are $(R,x)$-good with probability $1-o(1)$.

So, for every $r$ there exists $N_r$ such that for all $n>N_r$, at least $(1-y)n$ of the vertices of a graph drawn from $G(p, Q, n)$ are $(r,x)$-good with probability at least $1-2^{-r}$. Now, let $R(n)=\sup \{r: n>N_r\}$. It is clear that $\lim_{n\to\infty} R(n)=\infty$, and for any $n$, at least $(1-y)n$ of the vertices of a graph drawn from $G(p, Q, n)$ are $(R(n),x)$ good with probability at least $1-2^{-R(n)}=1-o(1)$.
\end{proof}

\begin{lemma}
Let $k\in \mathbb{Z}$, $p\in (0,1)^k$ with $|p|=1$, $Q$ be a symmetric matrix such that $\lambda_{\eta}^4>4\lambda_1^3$, $R(n)=\omega(1)$, and $\epsilon>0$ such that $(2\lambda_1^3/\lambda_{\eta}^2)^{1-\epsilon/3}<\lambda_1$. A vertex of a graph drawn from $G(p, Q, n)$ is $(R(n),x)$-good but $(\frac{1-\epsilon/3}{\ln\lambda_1}\ln n,x)$-bad with probability $o(1)$.
\end{lemma}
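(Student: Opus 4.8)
The plan is to propagate goodness outward one shell at a time, rerunning the shell-$r$ concentration estimate from the proof of the previous lemma but summing the resulting failure probabilities only over $R(n)\le r<R^\star$, where $R^\star:=\frac{1-\epsilon/3}{\ln\lambda_1}\ln n$ (note $\lambda_1>4$, since $\lambda_\eta^4>4\lambda_1^3$ and $|\lambda_\eta|\le\lambda_1$ force $\lambda_1^4>4\lambda_1^3$). Observe that the hypothesis $(2\lambda_1^3/\lambda_\eta^2)^{1-\epsilon/3}<\lambda_1$ says exactly that $(2\lambda_1^3/\lambda_\eta^2)^{R^\star}=n^{1-\delta'}$ for some $\delta'=\delta'(\lambda_1,\lambda_\eta,\epsilon)>0$. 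If $R(n)\ge R^\star$ for the given $n$, the stated event is empty; otherwise, if a vertex $v$ is $(R(n),x)$-good but $(R^\star,x)$-bad, the smallest radius $r$ at which the goodness inequality fails satisfies $R(n)\le r<R^\star$, and in particular $v$ is $(r,x)$-good, so by a union bound and $\pp[A\cap B]\le\pp[B\mid A]$,
\[\pp\big[\,v\text{ is }(R(n),x)\text{-good but }(R^\star,x)\text{-bad}\,\big]\ \le\ \sum_{r=R(n)}^{R^\star-1}\pp\big[\,\text{goodness fails at shell }r\ \big|\ v\text{ is }(r,x)\text{-good}\,\big].\]

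\textbf{The per-shell bound.}
Fix $r$ in this range and condition on $v$ being $(r,x)$-good. By the neighborhood-size bound for good vertices this forces $|N_{r'}(v)|\le\lambda_1^{r'}\sqrt{k}((\min p_i)^{-1/2}+x)$ for all $r'\le r$, and since $r<R^\star$ we have $\lambda_1^r<n^{1-\epsilon/3}$, so $|\bigcup_{r'\le r}N_{r'}(v)|=o(n)$ and $e_\sigma QN_r(v)/n=o(1)$. Conditionally on $N_0(v),\dots,N_r(v)$, the events $\{u\in N_{r+1}(v)\}$ over vertices $u$ outside the radius-$r$ ball are independent, a vertex in community $\sigma$ being included with probability $1-e^{-e_\sigma QN_r(v)/n}=e_\sigma QN_r(v)/n+O(\lambda_1^{2r}/n^2)$; thus $e_i\cdot N_{r+1}(v)$ is a sum of at most $n$ independent $\{0,1\}$ terms whose conditional mean differs from $e_i\cdot PQN_r(v)$ by $O\big(|N_r(v)|\sum_{r'\le r}|N_{r'}(v)|/n\big)=O(\lambda_1^{2r}/n)$. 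This is exactly where the hypotheses are used: since $\lambda_1^{2r}/n=(2\lambda_1^3/\lambda_\eta^2)^r(\lambda_\eta^2/(2\lambda_1))^r/n\le n^{-\delta'}(\lambda_\eta^2/(2\lambda_1))^r$, for $n$ large this systematic error is below half the slack $\frac{x\lambda_\eta}{2}\big(\frac{\lambda_\eta^2}{2\lambda_1}\big)^r\sqrt{p_i/k}$ permitted in the definition of $(r+1,x)$-goodness, hence negligible. The residual deviation of $e_i\cdot N_{r+1}(v)$ about its conditional mean is a sum of bounded ($|X_u|\le1$), independent, centered terms with $\Var(X_u)\le(\mathrm{const})|N_r(v)|/n$; feeding this into the concentration theorem together with the bound $D(\tfrac{\delta+\gamma}{1+\gamma}\|\tfrac{\gamma}{1+\gamma})\ge\tfrac{\delta^2(\gamma-\delta)}{2\gamma^2(1+\gamma)}$, exactly as in the previous proof (the needed $\gamma<1$ and $\delta<\gamma$ hold for large $n$ because one may take $\gamma\asymp\lambda_1^r/n=O(n^{-\epsilon/3})$ while $\delta/\gamma\asymp(\lambda_\eta^2/(2\lambda_1^2))^r$ with $\lambda_\eta^2/(2\lambda_1^2)\le\tfrac12$ and $r\ge R(n)\to\infty$), and taking a union over $i\in[k]$, gives
\[\pp\big[\,\text{goodness fails at shell }r\ \big|\ v\text{ is }(r,x)\text{-good}\,\big]\ \le\ 2k\,e^{-c\,(\lambda_\eta^4/(4\lambda_1^3))^{\,r}}\]
for a constant $c=c(p,Q,x)>0$.

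\textbf{Summing up, and the main obstacle.}
Since $b:=\lambda_\eta^4/(4\lambda_1^3)>1$, Bernoulli's inequality gives $b^r\ge1+(b-1)r$, so $\sum_{r\ge R(n)}2k\,e^{-cb^r}\le 2k\,e^{-c}\big(e^{-c(b-1)}\big)^{R(n)}\big/\big(1-e^{-c(b-1)}\big)$, which tends to $0$ because $R(n)=\omega(1)$; combined with the reduction this proves the lemma. The one genuinely delicate point --- and the reason the statement needs its own argument rather than being an immediate corollary of the previous lemma --- is keeping the \emph{systematic} error between the conditional mean of $N_{r+1}(v)$ and $PQN_r(v)$ (arising both from the shrinking pool of unused vertices and from linearizing $1-e^{-t}$), which is of order $\lambda_1^{2r}/n$, uniformly below the per-shell slack $\asymp(\lambda_\eta^2/(2\lambda_1))^r$ over the whole range $r<R^\star$; the hypothesis $(2\lambda_1^3/\lambda_\eta^2)^{1-\epsilon/3}<\lambda_1$ is calibrated precisely to make that ratio as small as $n^{-\delta'}$. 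Everything else is a verbatim rerun of the concentration bound from the preceding lemma.
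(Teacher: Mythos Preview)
Your proposal is correct and follows essentially the same route as the paper: condition on $(r,x)$-goodness, use the hypothesis $(2\lambda_1^3/\lambda_\eta^2)^{1-\epsilon/3}<\lambda_1$ to show the systematic mean-shift is dominated by half the goodness slack throughout $r<R^\star$, apply the concentration bound to control the residual deviation at each shell, and sum the resulting exponentially decaying terms over $r\ge R(n)$ via $b^r\ge 1+(b-1)r$ to get a geometric tail that vanishes since $R(n)\to\infty$. Your write-up is in fact slightly more careful than the paper's in explicitly verifying that $\lambda_1>4$ (so $R^\star$ makes sense) and that the side conditions $\gamma<1$, $\delta<\gamma$ needed for the $D$-lower-bound lemma hold in this regime.
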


\begin{proof}
for any $r<\frac{1-\epsilon/3}{\ln\lambda_1}\ln n$, if $v$ is $(r,x)$-good then $$|\cup_{i=0}^r N_r(v)|\le\sum_{i=0}^r  \lambda_1^r\sqrt{k}((\min p_i)^{-1/2}+x)<\lambda_1^{r+1}\sqrt{k}((\min p_i)^{-1/2}+x)/(\lambda_1-1).$$ By assumption, 
\begin{align*}
&|N_{r-1}(v)|\cdot|\cup_{i=0}^r N_i(v)|/\left(\frac{x|\lambda_{\eta}|}{2}\left(\frac{\lambda_{\eta}^2}{2\lambda_1}\right)^{r}\sqrt{p_j/k}\right)\\
&\le (2\lambda_1^3/\lambda_{\eta}^2)^{r}\cdot 2((\min p_i)^{-1/2}+x)^2\sqrt{k^3/p_j}/(x(\lambda_1-1)|\lambda_{\eta}|)\\ 
&\le (2\lambda_1^3/\lambda_{\eta}^2)^{\frac{1-\epsilon/3}{\ln\lambda_1}\ln n}\cdot 2((\min p_i)^{-1/2}+x)^2\sqrt{k^3/p_j}/(x(\lambda_1-1)|\lambda_{\eta}|)\\
&=o(n)
\end{align*}
So, if $n$ is sufficiently large, then the expected value of $e_i\cdot N_{r+1}(v)$ differs from $e_i\cdot PQN_r(v)$ by less than $\frac{1}{2}\cdot\frac{x|\lambda_{\eta}|}{2}(\frac{\lambda_{\eta}^2}{2\lambda_1})^{r}\sqrt{p_i/k}$ for all $r<\frac{1-\epsilon/3}{\ln\lambda_1}\ln n$. For such an $n$, a $(R(n),x)$ good vertex is also $(\frac{1-\epsilon/3}{\ln\lambda_1}\ln n,x)$-good if $e_i\cdot N_{r+1}$ differs from its expected value by at most $\frac{x|\lambda_{\eta}|}{4}(\frac{\lambda_{\eta}^2}{2\lambda_1})^{r}\sqrt{p_i/k}$ for all $R(n)\le r<\frac{1-\epsilon/3}{\ln\lambda_1}\ln n$ and $1\le i\le k$. Note that $$\sum_{r=0}^{\frac{1-\epsilon/3}{\ln\lambda_1}\ln n} \lambda_1^r\sqrt{k}((\min p_i)^{-1/2}+x)=o(n),$$ so for a given $i$ and $r$ this holds with probability at least   $1-2e^{-nD(\frac{\delta+\gamma}{1+\gamma}||\frac{\gamma}{1+\gamma})}$, where $\delta\sim \frac{x|\lambda_{\eta}|}{4}(\frac{\lambda_{\eta}^2}{2\lambda_1})^{r}(\sqrt{p_i/kn^2})$ and $\gamma\sim \lambda_1^{r+1}\sqrt{k}((\min p_i)^{-1/2}+x)/n$. 
\begin{align*}
D\left(\frac{\delta+\gamma}{1+\gamma}||\frac{\gamma}{1+\gamma}\right)&>\frac{\delta^2[\gamma-\delta]}{2\gamma^2(1+\gamma)}\\
&\sim \frac{\delta^2\gamma}{2\gamma^2}\\
&=\frac{\delta^2}{2\gamma}\\
&\sim \frac{x^2\lambda_{\eta}^2p_i}{32\lambda_1k^{3/2}((\min p_i)^{-1/2}+x)n}\left(\frac{\lambda_{\eta}^4}{4\lambda_1^3}\right)^{r}\\
&\ge \frac{x^2\lambda_{\eta}^2p_i}{32\lambda_1k^{3/2}((\min p_i)^{-1/2}+x)n}\left(1+r\left(\frac{\lambda_{\eta}^4}{4\lambda_1^3}-1\right)\right)
\end{align*}
So, there exist $N$, $a$, and $b>0$ such that if $n>N$ and $r<\frac{1-\epsilon/3}{\ln\lambda_1}\ln n$, then $D(\frac{\delta+\gamma}{1+\gamma}||\frac{\gamma}{1+\gamma})>(a+br)/n$. So, the probability that $v$ is $(R(n),x)$-good but $(\frac{1-\epsilon/3}{\ln\lambda_1}\ln n,x)$-bad is at most 
\begin{align*}
2k\sum_{r=R(n)}^{\frac{1-\epsilon/3}{\ln\lambda_1}\ln n} e^{-nD(\frac{\delta+\gamma}{1+\gamma}||\frac{\gamma}{1+\gamma})}&\le2k\sum_{r=R(n)}^{\frac{1-\epsilon/3}{\ln\lambda_1}\ln n} e^{-a-br}\\
&<2ke^{-a-bR(n)}/(1-e^{-b})\\
&=o(1)
\end{align*}
\end{proof}

\begin{definition}
For any vertices $v, v'\in G$, $r,r'\in \mathbb{Z}$, and subset of $G$'s edges $E$, let $N_{r,r'[E]}(v\cdot v')$ be the number of pairs of vertices $(v_1,v_2)$ such that $v_1\in N_{r[G\backslash E]}(v)$, $v_2\in N_{r'[G\backslash E]}(v')$, and $(v_1,v_2)\in E$.
\end{definition}

Note that if $N_{r[G\backslash E]}(v)$ and $N_{r'[G\backslash E]}(v')$ have already been computed, $N_{r,r'[E]}(v\cdot v')$ can be computed by means of the following algorithm, where $E[v]=\{v':(v,v')\in E\}$
\begin{algorithm}
compute\_$N_{r,r'[E]}(v\cdot v')$:

for $v_1\in N_{r'[G\backslash E]}(v')$:

$\phantom{xxx}$ for $v_2\in E[v_1]:$

$\phantom{xxxxxx}$ if $v_2\in N_{r[G\backslash E]}(v):$

$\phantom{xxxxxxxxx}$ count=count+1

return count
\end{algorithm}

Note that this runs in $O((d+1)|N_{r'[G\backslash E]}(v')|)$ average time. The plan is to independently put each edge in $G$ in $E$ with probability $c$. Then the probability distribution of $G\backslash E$ will be $\gss(n,p,(1-c)Q)$, so $N_{r[G\backslash E]}(v)\approx ((1-c)PQ)^re_{\sigma_v}$ and $N_{r'[G\backslash E]}(v')\approx ((1-c)PQ)^{r'}e_{\sigma_{v'}}$. So, it will hopefully be the case that \[N_{r,r'[E]}(v\cdot v')\approx ((1-c)PQ)^re_{\sigma_v}\cdot cQ((1-c)PQ)^{r'}e_{\sigma_{v'}}/n= c(1-c)^{r+r'} e_{\sigma_v}\cdot Q(PQ)^{r+r'}e_{\sigma_{v'}}/n.\] More rigorously, we have that:

\begin{lemma}
Choose $p$, $Q$, $G$ drawn from $\gss(n,p,Q)$, $E$ randomly selected from $G$'s edges such that each of $G$'s edges is independently assigned to $E$ with probability $c$, and $v,v'\in G$ chosen independently from $G$'s vertices. Then with probability $1-o(1)$, \[|N_{r,r'[E]}(v\cdot v')-N_{r[G\backslash E]}(v)\cdot cQN_{r'[G\backslash E]}(v')/n|<(1+\sqrt{|N_{r[G\backslash E]}(v)|\cdot |N_{r'[G\backslash E]}(v')|/n})\log n\]
\end{lemma}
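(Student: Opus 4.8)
The plan is to condition on everything except the assignment of $G$'s edges between $E$ and $G\setminus E$, and then to recognize $N_{r,r'[E]}(v\cdot v')$ as a sum of conditionally independent indicators to which the concentration theorem stated above applies. Realize $\gss(n,p,Q)$ together with the split by the following coupling: for each unordered pair $\{a,b\}$ draw $X_{ab}\sim\bin(1,Q_{\sigma_a\sigma_b}/n)$ and an independent uniform $U_{ab}$ on $[0,1]$; put $\{a,b\}$ into $E$ if $X_{ab}=1$ and $U_{ab}\le c$, into $G\setminus E$ if $X_{ab}=1$ and $U_{ab}>c$, and into neither otherwise, with all pairs $(X_{ab},U_{ab})$ mutually independent and $v,v'$ drawn uniformly from $[n]$ independently of the edges. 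Let $\mathcal F$ be the $\sigma$-field generated by $v$, $v'$, the labels $\sigma$, and $G\setminus E$. Then the sets $S:=N_{r[G\setminus E]}(v)$ and $S':=N_{r'[G\setminus E]}(v')$, their cardinalities $a:=|S|$, $b:=|S'|$, and their community-count vectors are $\mathcal F$-measurable; and since the $(X_{ab},U_{ab})$ are independent across pairs, conditionally on $\mathcal F$ the events $\{\{a,b\}\in E\}$ are independent, with $\pp(\{a,b\}\in E\mid\mathcal F)=0$ if $\{a,b\}\in G\setminus E$ and $\frac{cQ_{\sigma_a\sigma_b}/n}{1-(1-c)Q_{\sigma_a\sigma_b}/n}$ otherwise. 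Hence, given $\mathcal F$, $N_{r,r'[E]}(v\cdot v')$ is a sum of at most $ab$ independent variables taking values in $\{0,1,2\}$ (the weight $2$ only for pairs internal to $S\cap S'$).

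First I would pin down the conditional mean $\mu:=\E[N_{r,r'[E]}(v\cdot v')\mid\mathcal F]$. Each of the at most $ab$ relevant pairs has conditional success probability at most $2\max_{i,j}Q_{ij}/n$, so $\mu\le 2\max Q\cdot ab/n$, and comparing $\mu$ with $N_{r[G\setminus E]}(v)\cdot\frac{cQ}{n}N_{r'[G\setminus E]}(v')=\frac{c}{n}\sum_{v_1\in S,v_2\in S'}Q_{\sigma_{v_1}\sigma_{v_2}}$, the difference splits into three corrections: (a) the diagonal terms $v_1=v_2\in S\cap S'$, of total size at most $\frac{\max Q}{n}|S\cap S'|\le\frac{\max Q}{n}\sqrt{ab}$; (b) the pairs already present in $G\setminus E$, of total size at most $\frac{\max Q}{n}$ times the number of $G\setminus E$-edges between $S$ and $S'$, which is at most $\min(a,b)$ times the maximum degree of $G$, and $\Delta(G)=O(\log n)$ with high probability for $G\sim\gss(n,p,Q)$; and (c) the factor $1/(1-(1-c)Q_{\sigma_a\sigma_b}/n)=1+O(1/n)$, contributing $O(\max Q\cdot ab/n^2)$. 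Using $\min(a,b)\le\sqrt{ab}$ and $ab\le n^2$, on the event $\Delta(G)\le\log n$ each correction is $O\!\big(\tfrac{\log n}{\sqrt n}\big)\sqrt{ab/n}=o(\sqrt{ab/n})$, so $|\mu-N_{r[G\setminus E]}(v)\cdot\frac{cQ}{n}N_{r'[G\setminus E]}(v')|\le\tfrac12(1+\sqrt{ab/n})\log n$ once $n$ is large. (If $a=0$ or $b=0$ both sides of the claimed inequality vanish, so assume $a,b\ge1$.)

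Second, concentration: conditionally on $\mathcal F$, $N_{r,r'[E]}(v\cdot v')$ is a sum of independent variables bounded by $2$ with mean $\mu$ and variance at most $4\mu$, so the concentration theorem quoted above (equivalently a Bernstein inequality) gives, with $t:=\tfrac12(1+\sqrt{ab/n})\log n$, a bound of the form $\pp(|N_{r,r'[E]}(v\cdot v')-\mu|>t\mid\mathcal F)\le 2\exp(-c_0\min(t^2/\mu,\,t))$ for an absolute constant $c_0>0$. Since $a,b\ge1$ forces $t\ge\tfrac12\log n\to\infty$, and $t^2/\mu\to\infty$ as well (because $\mu\le 2\max Q\cdot ab/n$ while $t\ge\tfrac12\sqrt{ab/n}\log n$ and $\max Q$ is a fixed constant), the right-hand side is $n^{-\Omega(1)}$ uniformly in $\mathcal F$, hence also unconditionally. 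Combining with the mean estimate via the triangle inequality gives $|N_{r,r'[E]}(v\cdot v')-N_{r[G\setminus E]}(v)\cdot\frac{cQ}{n}N_{r'[G\setminus E]}(v')|\le 2t=(1+\sqrt{ab/n})\log n$ outside an event of probability $o(1)$; shrinking $t$ by an arbitrarily small factor upgrades this to the strict inequality claimed.

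I expect the middle step --- showing that the ``obvious'' quantity $N_{r[G\setminus E]}(v)\cdot\frac{cQ}{n}N_{r'[G\setminus E]}(v')$ is the conditional mean up to $o(\sqrt{ab/n})$ --- to be the main obstacle, since it is where one must control the overlap $S\cap S'$ and the $G\setminus E$-edges between the two spheres, using that $v,v'$ are chosen independently and that $G$ is sparse with bounded maximum degree. The concentration step is routine once the conditional-independence structure is in place; the only subtlety there is the degenerate regime where $|S|$ or $|S'|$ is small, which is precisely what the additive ``$1+$'' in the error term is designed to absorb.
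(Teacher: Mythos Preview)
Your approach is essentially the same as the paper's: condition on $G\setminus E$ (and the labels and choice of $v,v'$), observe that $N_{r,r'[E]}(v\cdot v')$ then becomes a sum of conditionally independent Bernoulli-type indicators over pairs in $S\times S'$, show the conditional mean matches the target $N_{r[G\setminus E]}(v)\cdot\frac{cQ}{n}N_{r'[G\setminus E]}(v')$ up to negligible corrections, and finish with a concentration bound. The paper's version is terser: it asserts the mean correction is ``at most a constant'' on the high-probability event that $G$ has at most twice its expected number of edges, notes the variance is $O(ab/n)$, and then just invokes that the sum lies within $\log n$ standard deviations of its mean with probability $1-o(1)$ (Chebyshev already suffices for this). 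Your more careful breakdown of the mean corrections (diagonal, $G\setminus E$-edges, the $1/(1-(1-c)Q/n)$ factor) and your use of a Bernstein-type tail are both fine and arguably cleaner, but they do not constitute a genuinely different route.
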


\begin{proof}
Roughly speaking, for each $v_1\in N_{r[G\backslash E]}(v)$ and $v_2\in N_{r'[G\backslash E]}(v')$, $(v_1,v_2)\in E$ with probability $cQ_{\sigma_{v_1},\sigma_{v_2}}/n$. This is complicated by the facts that $(v_1,v_1)$ is never in $E$ and no edge is in $G\backslash E$ and $E$. However, this changes the expected value of $N_{r,r'[E]}(v\cdot v')$ given $G\backslash E$ by at most a constant unless $G$ has more than double its expected number of edges, something that happens with probability $o(1)$. Furthermore, whether $(v_1,v_2)$ is in $E$ is independent of whether $(v_1',v_2')$ is in $E$ unless $(v_1',v_2')=(v_1,v_2)$ or $(v_1',v_2')=(v_2,v_1)$. So, the variance of  $N_{r,r'[E]}(v\cdot v')$ is proportional to its expected value, which is $$O(|N_{r[G\backslash E]}(v)|\cdot |N_{r'[G\backslash E]}(v')|/n).$$ $N_{r,r'[E]}(v\cdot v')$ is within $\log n$ standard deviations of its expected value with probability $1-o(1)$, which completes the proof.
\end{proof}

Note that if $\overrightarrow{v}$ is an eigenvector of $(1-c)PQ$, $\sqrt{P}Q\overrightarrow{v}$ is an eigenvector of the symmetric matrix $(1-c)\sqrt{P}Q\sqrt{P}$. So, since eigenvectors of a symmetric matrix with different eigenvalues are orthogonal, we have \[N_{r[G\backslash E]}(v)\cdot cQN_{r'[G\backslash E]}(v')/n=\frac{c}{n} \sum_i w_i(N_{r[G\backslash E]}(v))\cdot Qw_i(N_{r'[G\backslash E]}(v'))\] 

\begin{lemma}[Decomposition Equation Lemma]
Let $x>0$, $0<c<1$ such that $(1-c)\lambda_{\eta}^2>\lambda_1$, $\epsilon>0$, $G$ drawn from $\gss(n,p,Q)$, $E$ be a subset of $G$'s edges that independently contains each edge with probability $c$, $r, r'\in\mathbb{Z}^+$ such that $r+r'\ge (1+\epsilon)\log n/\log((1-c)\lambda_{\eta}^2/\lambda_1)$ and $r\ge r'$, and $v,v'\in G$ be chosen independently of $G$'s adjacency matrix. The system of equations \[\sum_i ((1-c)\lambda_i)^{r+r'+j+1}z_i=\frac{(1-c)n}{c}N_{r+j,r'[E]}(v\cdot v') \text{ for } 0\le j<\eta\] has a unique solution. Furthermore, if $v$ is $(r+\eta,x)$-good and $v'$ is $(r'+1,x)$-good with respect to $G\backslash E$ then \[|z_i-w_i(\{v\})\cdot P^{-1}w_i(\{v'\})|<2x(\min p_j)^{-1/2}+x^2+o(1)\] for all $i$ with probability $1-o(1)$.
\end{lemma}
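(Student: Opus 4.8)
Pulling $((1-c)\lambda_i)^{r+r'+1}$ out of column $i$, the coefficient matrix of the system is a Vandermonde matrix $M$ with (distinct, nonzero) nodes $(1-c)\lambda_1,\dots,(1-c)\lambda_\eta$ composed with $\diag_i\!\big(((1-c)\lambda_i)^{r+r'+1}\big)$; hence $\det M\neq0$, the solution $z$ is unique, and $M^{-1}$ is a fixed matrix depending only on $c$ and $PQ$, with $\|M^{-1}\|_\infty=O(1)$. For the estimate I would compare $z$ not to $\zeta_i:=w_i(\{v\})\cdot P^{-1}w_i(\{v'\})$ directly but to
\[
\hat z_i\;:=\;\frac{w_i(N_{r[G\backslash E]}(v))}{((1-c)\lambda_i)^{r}}\cdot P^{-1}\,\frac{w_i(N_{r'[G\backslash E]}(v'))}{((1-c)\lambda_i)^{r'}},
\]
bounding $|\hat z_i-\zeta_i|$ and $|z_i-\hat z_i|$ in turn.

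\textbf{$\hat z$ versus $\zeta$.}
Let $\langle u,w\rangle:=u\cdot P^{-1}w$, the inner product under which distinct eigenspaces of $PQ$ are orthogonal (cf.\ the computation preceding the {\tt Vertex\_comparison\_algorithm}); note $\sup_{w\cdot Pw=1}|w\cdot u|=\|u\|_{P^{-1}}$ and $\sum_i\langle w_i(\{v\}),w_i(\{v\})\rangle=\langle e_{\sigma_v},e_{\sigma_v}\rangle=1/p_{\sigma_v}$, so $\|w_i(\{v\})\|_{P^{-1}}\le(\min p_j)^{-1/2}$. Since $G\backslash E$ has nonzero eigenvalues $(1-c)\lambda_i$, the $(r''{+}1,x)$-goodness of $v$ with respect to $G\backslash E$ gives $\big\|w_i(N_{r''+1}(v))-(1-c)\lambda_i\,w_i(N_{r''}(v))\big\|_{P^{-1}}\le\tfrac{x(1-c)|\lambda_\eta|}{2}\big(\tfrac{(1-c)\lambda_\eta^2}{2\lambda_1}\big)^{r''}$ for $r''$ below the goodness radius; telescoping and summing the geometric series, whose ratio $\tfrac{(1-c)\lambda_\eta^2/(2\lambda_1)}{(1-c)|\lambda_i|}=\tfrac{\lambda_\eta^2}{2\lambda_1|\lambda_i|}\le\tfrac12$ (as $\lambda_1,|\lambda_i|\ge|\lambda_\eta|$), yields $\big\|\tfrac{w_i(N_r(v))}{((1-c)\lambda_i)^{r}}-w_i(\{v\})\big\|_{P^{-1}}\le x$, and likewise with $(v,r)$ replaced by $(v',r')$. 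Expanding $\hat z_i$ bilinearly and applying Cauchy--Schwarz in $\langle\cdot,\cdot\rangle$ to the three cross terms then gives $|\hat z_i-\zeta_i|\le 2x(\min p_j)^{-1/2}+x^2$.

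\textbf{$z$ versus $\hat z$.}
Combine: (a) the preceding lemma, $N_{r+j,r'[E]}(v\cdot v')=\tfrac cn N_{r+j[G\backslash E]}(v)\cdot Q\,N_{r'[G\backslash E]}(v')+O\!\big((1+\sqrt{|N_{r+j}(v)|\,|N_{r'}(v')|/n}\,)\log n\big)$ with probability $1-o(1)$, where the neighborhood sizes are $O(((1-c)\lambda_1)^{r+j})$ and $O(((1-c)\lambda_1)^{r'})$ by the neighborhood-size bound for good vertices applied to $G\backslash E$; (b) the orthogonal decomposition stated just above this lemma, together with $Q=P^{-1}PQ$, which turns that product into $\sum_i\lambda_i\,w_i(N_{r+j}(v))\cdot P^{-1}w_i(N_{r'}(v'))$; and (c) the telescoping identity of the previous paragraph, in the form $w_i(N_{r+j}(v))=((1-c)\lambda_i)^{j}w_i(N_r(v))+((1-c)\lambda_i)^{r+j}\varepsilon_i^{(j)}$ with $\big\|((1-c)\lambda_i)^{r+j}\varepsilon_i^{(j)}\big\|_{P^{-1}}=O(b^{r})$, $b:=\tfrac{(1-c)\lambda_\eta^2}{2\lambda_1}$ (the tail of the same series; here one uses that $j<\eta$ is bounded). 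Substituting, the $j$-th equation becomes $\tfrac{(1-c)n}{c}N_{r+j,r'[E]}(v\cdot v')=\sum_i((1-c)\lambda_i)^{r+r'+j+1}\hat z_i+\mathrm{err}_j$ with $|\mathrm{err}_j|=O\!\big(n\log n+\sqrt{n\,((1-c)\lambda_1)^{r+r'}}\,\log n+b^{r}((1-c)\lambda_1)^{r'+\eta}\big)$. As $z$ solves the system exactly, uniqueness gives $z_i-\hat z_i=((1-c)\lambda_i)^{-(r+r'+1)}(M^{-1}\mathrm{err})_i$, so $|z_i-\hat z_i|\le\|M^{-1}\|_\infty\,\|\mathrm{err}\|_\infty\,((1-c)|\lambda_\eta|)^{-(r+r'+1)}$. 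Its dominant part is
\[
\dfrac{\sqrt{n\,((1-c)\lambda_1)^{r+r'}}\,\log n}{((1-c)|\lambda_\eta|)^{r+r'}}=\sqrt n\,\log n\cdot\Big(\dfrac{(1-c)\lambda_\eta^2}{\lambda_1}\Big)^{-(r+r')/2}\le\sqrt n\,\log n\cdot n^{-(1+\epsilon)/2}=o(1),
\]
using $(1-c)\lambda_\eta^2>\lambda_1$ and $r+r'\ge(1+\epsilon)\log n/\log((1-c)\lambda_\eta^2/\lambda_1)$; the $O(n\log n)$ term is $o(1)$ since the same hypotheses force $((1-c)|\lambda_\eta|)^{r+r'}\ge n^{1+\epsilon}$, and the $O(b^r(\cdots))$ term is $o(1)$ using in addition $b/((1-c)|\lambda_\eta|)=|\lambda_\eta|/(2\lambda_1)\le\tfrac12$ and $r\ge r'$. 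Adding the two estimates yields $|z_i-\zeta_i|\le 2x(\min p_j)^{-1/2}+x^2+o(1)$.

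\textbf{Main obstacle.}
The delicate point is the last display: the statistical error supplied by the preceding lemma is polynomially large in $n$ (it tracks neighborhood sizes, hence $\lambda_1$), while inverting the Vandermonde system divides by $((1-c)|\lambda_\eta|)^{r+r'}$, governed by the \emph{smallest} nonzero eigenvalue of $PQ$. The estimate survives only because the quotient $\lambda_1^{(r+r')/2}/|\lambda_\eta|^{r+r'}$ recombines into a power of $(1-c)\lambda_\eta^2/\lambda_1$ --- exactly the base appearing in the hypothesis on $r+r'$ --- so that $(1-c)\lambda_\eta^2>\lambda_1$ is precisely the condition that makes it vanish. Carrying this out rigorously (including the facts that a good vertex's neighborhoods grow geometrically yet stay below $n$, which keeps the powers of $\lambda_1$ polynomial, and that the $O(b^r)$ term is controlled once $\epsilon$ is small relative to the eigenvalue gap) is the technical heart; everything else is routine geometric-series bookkeeping. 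The other point worth flagging is item (c): because $j<\eta$ is bounded, the normalized quantities $w_i(N_{r+j}(v))/((1-c)\lambda_i)^{r+j}$ are essentially independent of $j$ (they agree up to $O(2^{-r})$), which is what lets the genuine $O(x)$ structural discrepancy pass through $M^{-1}$ without being amplified.
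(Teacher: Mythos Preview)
Your proposal is correct and follows essentially the same route as the paper. The paper likewise invokes Vandermonde invertibility for uniqueness, uses Lemma~5 plus the goodness neighborhood bounds to show the statistical error is $o\big(((1-c)|\lambda_\eta|)^{r+r'}\big)$ via exactly the same ``recombination into a power of $(1-c)\lambda_\eta^2/\lambda_1$'' computation you highlight, and then telescopes the goodness increments to pass from the empirical inner products to $w_i(\{v\})\cdot P^{-1}w_i(\{v'\})$, arriving at the same $2x(\min p_j)^{-1/2}+x^2$ bound. Your intermediate quantity $\hat z_i$ is precisely the expression $(1-c)((1-c)\lambda_i)^{-r-r'-1}\,w_i(N_{r[G\backslash E]}(v))\cdot Q\,w_i(N_{r'[G\backslash E]}(v'))$ that appears in the paper's final triangle-inequality chain; you have simply named it and split the argument more cleanly into ``$z$ versus $\hat z$'' and ``$\hat z$ versus $\zeta$,'' whereas the paper interleaves these steps.
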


\begin{proof}
First, note that by $(r+\eta,x)$-goodness of $v$, \[|w\cdot w_i(N_{r+j[G\backslash E]}(v))-\lambda_i^j w\cdot w_i(N_{r[G\backslash E]}(v))|<x(d(1-c))^{j-1}(1-c)|\lambda_{\eta}|\left(\frac{(1-c)\lambda_{\eta}^2}{2\lambda_1}\right)^{r}\] whenever $w\cdot Pw=1$. So, with probability $1-o(1)$, 
\begin{align*}
&|\sum_i ((1-c)\lambda_i)^{j}w_i(N_{r[G\backslash E]}(v))\cdot Qw_i(N_{r'[G\backslash E]}(v'))-\frac{n}{c}N_{r+j,r'[E]}(v\cdot v')|\\
&<\left(1+\sqrt{(\max_i 1/p_i+x)^2(1-c)^{r+r'+j}\lambda_1^{r+r'+j}/n}\right)\frac{n}{c}\log n\\
&\phantom{xxxxxx}+x|\lambda_{\eta}|(1-c)^{r+r'+j}d^{j-1}\left(\frac{\lambda_{\eta}^2}{2\lambda_1}\right)^{r}\lambda_1^{r'}(\max_i 1/p_i+x)\\
&\le \left(n+((1-c)\lambda_{\eta})^{r+r'+j}\sqrt{(\max_i 1/p_i+x)^2(\lambda_1/((1-c)\lambda_{\eta}^2))^{r+r'+j}\cdot n}\right)\frac{\log n}{c}\\
&\phantom{xxxxxx}+x|\lambda_{\eta}|(1-c)^{r+r'+j}d^{j-1}\left(\frac{\lambda_{\eta}^2}{2}\right)^{(r+r')/2}(\max_i 1/p_i+x)\\
& \le \left(((1-c)\lambda_{\eta})^{\log n/\log((1-c)\lambda_{\eta})}+((1-c)\lambda_{\eta})^{r+r'+j}\sqrt{(\max_i 1/p_i+x)^2n^{-1-\epsilon}\cdot n}\right)\frac{\log n}{c}\\
&\phantom{xxxxxx}+x|\lambda_{\eta}|(1-c)^{r+r'+j}d^{j-1}\left(\frac{\lambda_{\eta}^2}{2}\right)^{(r+r')/2}(\max_i 1/p_i+x)\\
&\le \left(((1-c)\lambda_{\eta})^{(r+r')/(1+\epsilon)}+((1-c)\lambda_{\eta})^{r+r'+j}\sqrt{(\max_i 1/p_i+x)^2n^{-\epsilon}}\right)\frac{\log n}{c}\\
&\phantom{xxxxxx}+x|\lambda_{\eta}|(1-c)^{r+r'+j}d^{j-1}\left(\frac{\lambda_{\eta}^2}{2}\right)^{(r+r')/2}(\max_i 1/p_i+x)\\
&=o(((1-c)\lambda_{\eta})^{r+r'})
\end{align*}

Now, let $M$ be the matrix such that $M_{i,j}=((1-c)\lambda_i)^j$. This matrix is invertible because the $\lambda_i$ are distinct, so the system of equations has a unique solution. Furthermore, for fixed values of $c$ and $i$, $((1-c)\lambda_i)^{r+r'}\lambda_i z_i-w_i(N_{r[G\backslash E]}(v))\cdot Qw_i(N_{r'[G\backslash E]}(v'))$ is a fixed linear combination of these error terms. So,
 \begin{align*}
&|z_i-w_i(\{v\})\cdot P^{-1}w_i(\{v'\})|\\
&\le |z_i-(1-c)((1-c)\lambda_i)^{-r-r'-1}w_i(N_{r[G\backslash E]}(v))\cdot Qw_i(N_{r'[G\backslash E]}(v')|\\
&\phantom{xxxxxx}+(1-c)\cdot|((1-c)\lambda_i)^{-r-r'-1}w_i(N_{r[G\backslash E]}(v))\cdot Q w_i(N_{r'[G\backslash E]}(v')\\
&\phantom{\phantom{xxxxxx}+(1-c)\cdot|}-((1-c)\lambda_i)^{-r'-1}w_i(\{v\})\cdot Qw_i(N_{r'[G\backslash E]}(v')|\\
&\phantom{xxxxxx}+|(1-c)\cdot((1-c)\lambda_i)^{-r'-1}w_i(\{v\})\cdot Qw_i(N_{r'[G\backslash E]}(v')-w_i(\{v\})\cdot P^{-1}w_i(\{v'\})|\\
&\le |((1-c)\lambda_i)^{-r'}w_i(N_{r'[G\backslash E]}(v'))\cdot P^{-1}[((1-c)\lambda_i)^{-r}w_i(N_{r[G\backslash E]}(v))-w_i(\{v\})]|\\
&\phantom{xxxxxx}+|w_i(\{v\})\cdot P^{-1}[((1-c)\lambda_i)^{-r'}w_i(N_{r'[G\backslash E]}(v')-w_i(\{v'\}))]+o(1)
\end{align*}
By goodness of $v$ and $v'$, this is less than or equal to
\begin{align*}
& ((1-c)\lambda_i)^{-r'}\sqrt{w_i(N_{r'[G\backslash E]}(v'))\cdot P^{-1}w_i(N_{r'[G\backslash E]}(v'))}\cdot x+\sqrt{w_i(\{v\})\cdot P^{-1}w_i(\{v\})}\cdot x+o(1)\\
&\le\sqrt{w_i(\{v'\})\cdot P^{-1}w_i(\{v'\})+2w_i(\{v'\})\cdot P^{-1}[((1-c)\lambda_i)^{-r'}w_i(N_{r'[G\backslash E]}(v'))-w_i(\{v'\})] +}\\
&\overline{[((1-c)\lambda_i)^{-r'}w_i(N_{r'[G\backslash E]}(v'))-w_i(\{v'\})]\cdot P^{-1}[((1-c)\lambda_i)^{-r'}w_i(N_{r'[G\backslash E]}(v'))-w_i(\{v'\})]}\cdot x\\
&+x\sqrt{1/\min p_j}+o(1)\\
&\le \sqrt{1/\min p_j+2x/\sqrt{\min p_j}+x^2}x+x/\sqrt{\min p_j}+o(1)\\
&=(x^2+2x(\min p_j)^{-1/2})+o(1)
\end{align*}
with probability $1-o(1)$ for all $i$.
\end{proof}

For any two vertices in different communities, $v$ and $v'$, the fact that $Q$'s rows are distinct implies that $Q(\overrightarrow{\{v\}}-\overrightarrow{\{v'\}})\ne 0$. So, $w_i(\{v\})\ne w_i(\{v'\})$ for some $1\le i\le \eta$. That means that for any two vertices $v$ and $v'$,  
\begin{align*}
&(w_i(\{v\})- w_i(\{v'\}))\cdot P^{-1}(w_i(\{v\})- w_i(\{v'\}))\\
&=w_i(\{v\})\cdot P^{-1}w_i(\{v\})-2w_i(\{v\})\cdot P^{-1}w_i(\{v'\})+w_i(\{v'\})\cdot P^{-1}w_i(\{v'\})\ge 0
\end{align*}
for all $1\le i\le \eta$, with equality for all $i$ if and only if $v$ and $v'$ are in the same community. This also implies that given a vertex $v$, another vertex in the same community $v'$, and a vertex in a different community $v''$, 
\begin{align*}
&2w_i(\{v\})\cdot P^{-1}w_i(\{v'\})-w_i(\{v'\})\cdot P^{-1}w_i(\{v'\}) \\&\ge 2w_i(\{v\})\cdot P^{-1}w_i(\{v''\})-w_i(\{v''\})\cdot P^{-1}w_i(\{v''\})
\end{align*}
 for all $1\le i\le \eta$ and the inequality is strict for at least one $i$. This suggests the following algorithms for classifying vertices.

\begin{algorithm}
Vertex\_comparison\_algorithm(v,v', r,r',E,x,c):

(Assumes that $N_{r''[G\backslash E]}(v)$ and $N_{r''[G\backslash E]}(v)$ have already been computed for $r''\le r+\eta$)

\phantom{xxx} Solve the equations given in the previous lemma for $(v,v',r,r')$, $(v,v,r,r')$, and  $(v',v',r,r')$ in order to compute $z_i(v\cdot v')$, $z_i(v\cdot v)$, and $z_i(v'\cdot v')$ 

\phantom{xxx} If $\exists i: z_i(v\cdot v)-2z_i(v\cdot v')+z_i(v'\cdot v')> 5(2x(\min p_j)^{-1/2}+x^2)$ then conclude that $v$ and $v'$ are in different communities.

\phantom{xxx} Otherwise, conclude that $v$ and $v'$ are in the same community.
\end{algorithm}

\begin{lemma}
Assuming that each of $G$'s edges was independently assigned to $E$ with probability $c$, this algorithm runs in $O(((1-c)\lambda_1)^{r'})$ average time. Furthermore, if the conditions of the decomposition equation lemma are satisfied and $13(2x(\min p_j)^{-1/2}+x^2)$ is less than the minimum nonzero value of $(w_i(\{v\})- w_i(\{v'\}))\cdot P^{-1}(w_i(\{v\})- w_i(\{v'\}))$ then the algorithm returns the correct result with probability $1-o(1)$.
\end{lemma}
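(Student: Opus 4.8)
The plan is to prove the two parts of the statement — the running time and the correctness — in that order, in each case reducing to Lemma 1 and to the Decomposition Equation Lemma.

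For the running time, recall that $N_{r''[G\backslash E]}(v)$ and $N_{r''[G\backslash E]}(v')$ are supplied as input for all $r''\le r+\eta$, so the only computation the algorithm performs is: (i) forming the $3\eta$ counts $N_{r+j,r'[E]}(v\cdot v')$, $N_{r+j,r'[E]}(v\cdot v)$ and $N_{r+j,r'[E]}(v'\cdot v')$ for $0\le j<\eta$ via the subroutine for computing $N_{\cdot,\cdot[E]}$; (ii) solving three $\eta\times\eta$ linear systems, whose common coefficient matrix $M_{i,j}=((1-c)\lambda_i)^j$ is invertible since the $\lambda_i$ are distinct; and (iii) one comparison over $i\le\eta$. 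Since $k$ — hence $h$ and $\eta$ — is a fixed constant, steps (ii) and (iii) cost $O(1)$. For step (i), because $r\ge r'$ each call loops over the smaller of the two neighborhoods, so it runs in $O((d+1)\,|N_{r'[G\backslash E]}(\cdot)|)$ average time, exactly as already observed for that subroutine. As $G\backslash E$ has law $\gss(n,p,(1-c)Q)$, whose associated matrix $(1-c)PQ$ has top eigenvalue $(1-c)\lambda_1$, Lemma 1 bounds $|N_{r'[G\backslash E]}(v)|$ and $|N_{r'[G\backslash E]}(v')|$ by $((1-c)\lambda_1)^{r'}\sqrt{k}((\min p_i)^{-1/2}+x)$ on the high-probability event that $v$ and $v'$ are $(r+\eta,x)$-good with respect to $G\backslash E$. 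Summing the $O(\eta)=O(1)$ calls gives the claimed $O(((1-c)\lambda_1)^{r'})$ average running time.

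For correctness, the parameter hypotheses of the Decomposition Equation Lemma together with the goodness lemmas above, applied to $(1-c)Q$ (for which $(1-c)\lambda_\eta^4>4\lambda_1^3$ and $(1-c)\lambda_\eta^2>\lambda_1$ follow from the ambient assumptions when $c$ is small), guarantee that for $v,v'$ chosen independently of $G$'s adjacency matrix both are $(r+\eta,x)$-good with respect to $G\backslash E$ with probability $1-o(1)$; since $r+\eta\ge r'+1$, each is then also $(r'+1,x)$-good. On that event, applying the Decomposition Equation Lemma to the pairs $(v,v')$, $(v,v)$ and $(v',v')$ gives, with probability $1-o(1)$ and for all $i\le\eta$,
\begin{align*}
&|z_i(v\cdot v)-w_i(\{v\})\cdot P^{-1}w_i(\{v\})|\le\Delta,\qquad |z_i(v\cdot v')-w_i(\{v\})\cdot P^{-1}w_i(\{v'\})|\le\Delta,\\
&|z_i(v'\cdot v')-w_i(\{v'\})\cdot P^{-1}w_i(\{v'\})|\le\Delta,
\end{align*}
with $\Delta=2x(\min p_j)^{-1/2}+x^2+o(1)$. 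Writing $\Delta'=2x(\min p_j)^{-1/2}+x^2$ (the quantity in the algorithm's threshold), $D_i=(w_i(\{v\})-w_i(\{v'\}))\cdot P^{-1}(w_i(\{v\})-w_i(\{v'\}))$ and $\widehat D_i=z_i(v\cdot v)-2z_i(v\cdot v')+z_i(v'\cdot v')$, the triangle inequality gives $|\widehat D_i-D_i|\le4\Delta=4\Delta'+o(1)$ for all $i$.

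It then remains to run the two-case check, using the fact established just before the algorithm that $D_i\ge0$ always, with $D_i=0$ for all $i$ exactly when $v$ and $v'$ share a community. If they share a community, then $\widehat D_i\le4\Delta'+o(1)<5\Delta'$ for every $i$ once $n$ is large (here $\Delta'>0$ because $x>0$), so the algorithm correctly reports ``same community''. If they do not, then some $D_i$ equals a nonzero value of $(w_i(\{v\})-w_i(\{v'\}))\cdot P^{-1}(w_i(\{v\})-w_i(\{v'\}))$ and so is at least its minimum nonzero value $m_0$; for that $i$ we get $\widehat D_i\ge m_0-4\Delta'-o(1)$, and since the hypothesis $13\Delta'<m_0$ gives $m_0-9\Delta'>4\Delta'>0$, this exceeds $5\Delta'$ for $n$ large, so the algorithm correctly reports ``different community''. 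Intersecting the two $1-o(1)$ events (goodness of $v,v'$, and the Decomposition Equation Lemma estimates) completes the proof. The only analytically substantial ingredient is the Decomposition Equation Lemma, used here purely as a black box; the only point needing care in the present argument is that the $o(1)$ error it leaves stays well below the separations $5\Delta'-4\Delta'$ and $(m_0-4\Delta')-5\Delta'$, which the constant $13$ comfortably provides, so no further obstacle arises.
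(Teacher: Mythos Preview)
Your approach matches the paper's: bound the running time by the cost of the $3\eta$ calls to compute $N_{r+j,r'[E]}$, then use the Decomposition Equation Lemma to get each $z_i$ within roughly $\Delta'=2x(\min p_j)^{-1/2}+x^2$ of its target, so that $|\widehat D_i-D_i|$ is below $5\Delta'$ when $D_i=0$ and $\widehat D_i>5\Delta'$ when $D_i\ge m_0>13\Delta'$. The paper absorbs the $o(1)$ into a constant, writing the per-$z_i$ error as $\tfrac{6}{5}\Delta'$, but the arithmetic is the same.

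Two small points. First, your running-time argument invokes Lemma~1, which bounds $|N_{r'[G\backslash E]}(v)|$ only on the event that $v$ is good; that does not by itself bound the \emph{average} running time, since you have no control of the neighborhood size on the bad event. The paper instead uses the unconditional expectation $E[|N_{r'[G\backslash E]}(v)|]=O(((1-c)\lambda_1)^{r'})$, which follows directly from $E[N_{r+1}(v)\mid N_r(v)]\le (1-c)PQ\,N_r(v)$. Second, the paragraph where you argue that $v,v'$ are good with probability $1-o(1)$ is unnecessary: goodness is part of ``the conditions of the decomposition equation lemma'' and is therefore assumed in the hypothesis of the lemma you are proving.
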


\begin{proof}
The slowest step of the algorithm is using $compute\_N_{r+j,r'[E]}(v\cdot v')$ in order to calculate the constant terms for the equations. This runs in an average time of $O(E[|N_{r'[G\backslash E]]}(v)|+E[|N_{r'[G\backslash E]}(v')|])= O(((1-c)\lambda_1)^{r'})$ and must be done $3\eta$ times. If the conditions of the decomposition equation lemma are satisified then with probability $1-o(1)$ the $z_i$ are all within $\frac{6}{5}(2x(\min p_j)^{-1/2}+x^2)$ of the products they seek to approximate, in which case $$z_i(v\cdot v)-2z_i(v\cdot v')+z_i(v'\cdot v')> 5(2x(\min p_j)^{-1/2}+x^2)$$ if and only if $$(w_i(\{v\})- w_i(\{v'\}))\cdot P^{-1}(w_i(\{v\})- w_i(\{v'\}))\ne 0,$$ which is true for some $i$ if and only if $v$ and $v'$ are in different communities.
\end{proof}

\begin{algorithm}
Vertex\_classification\_algorithm(v[],v', r,r',E,x,c):

(Assumes that $N_{r''[G\backslash E]}(v[\sigma])$have already been computed for $0\le \sigma<k$ and $r''\le r+\eta$, that $N_{r''[G\backslash E]}(v')$ has already been computed for all $r''\le r'$, and that $z_i(v[\sigma]\cdot v[\sigma] )$ as described in the previous algorithm have already been computed for each $i$ and $\sigma$)

\phantom{xxx} Solve the equations in the decomposition equation lemma for $(v[\sigma],v',r,r')$ in order to compute $z_i(v[\sigma]\cdot v')$ for each $\sigma$

\phantom{xxx} If there exists a unique $\sigma$ such that for all $\sigma'\ne \sigma$ and all $i$, \[z_i(v[\sigma]\cdot v[\sigma])-2z_i(v[\sigma]\cdot v')\le z_i(v[\sigma']\cdot v[\sigma'])-2z_i(v[\sigma']\cdot v') +\frac{19}{3}\cdot (2x(\min p_j)^{-1/2}+x^2)\] then conclude that $v'$ is in the same community as $v[\sigma]$.

\phantom{xxx} Otherwise, Fail.
\end{algorithm}

\begin{lemma}
Assuming that $E$ was generated properly, this algorithm runs in $O(((1-c)\lambda_1)^{r'})$ average time. Furthermore, assume that $r$, $r'$, $x$, $c$, and the graph's parameters satisfy the conditions of the decomposition equation lemma. Also, assume that $v[]$ contains exactly one vertex from each community, $v[\sigma]$ is $( r+\eta,x)$-good with respect to $G\backslash E$ for all $\sigma$, and $v'$ is $(r'+1,x)$-good with respect to $G\backslash E$. Finally, assume that $13(2x(\min p_j)^{-1/2}+x^2)$ is less than the minimum nonzero value of $(w_i(\{v\})- w_i(\{v'\}))\cdot P^{-1}(w_i(\{v\})- w_i(\{v'\}))$. Then this algorithm classifies $v'$ correctly with probability $1-o(1)$.
\end{lemma}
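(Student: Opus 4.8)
The plan is to prove the two assertions separately, the running time first and correctness second, with both following fairly directly from Lemma~1 and the Decomposition Equation Lemma. For the running time, I would observe that, given the precomputed quantities $N_{r''[G\backslash E]}(v[\sigma])$, $N_{r''[G\backslash E]}(v')$, and $z_i(v[\sigma]\cdot v[\sigma])$, the only non-trivial work is forming the right-hand sides of the $k$ linear systems: for each $\sigma$ one calls $\mathrm{compute}\_N_{r+j,r'[E]}(v[\sigma]\cdot v')$ for $0\le j<\eta$, and each such call runs in $O((d+1)|N_{r'[G\backslash E]}(v')|)$ average time. Since $v'$ is $(r'+1,x)$-good with respect to $G\backslash E$, and $G\backslash E\sim\gss(n,p,(1-c)Q)$, Lemma~1 gives $|N_{r'[G\backslash E]}(v')|\le ((1-c)\lambda_1)^{r'}\sqrt{k}((\min p_i)^{-1/2}+x)$. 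As $k,\eta,d$ are constants and each $\eta\times\eta$ Vandermonde-type system (with $M_{i,j}=((1-c)\lambda_i)^j$) is solved in $O(1)$, the total is $O(((1-c)\lambda_1)^{r'})$, as claimed.

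For correctness, let $\sigma^\star$ denote the true community of $v'$. The first step is to apply the Decomposition Equation Lemma to each pair $(v[\sigma],v',r,r')$ and each pair $(v[\sigma],v[\sigma],r,r')$: in both cases the vertices involved are $(r+\eta,x)$-good with respect to $G\backslash E$ by hypothesis, and since $r\ge r'$ and $\eta\ge1$ (which holds because $Q$ has no two equal rows, so $PQ\ne0$) they are also $(r'+1,x)$-good, so the lemma applies. A union bound over these $2k=O(1)$ events shows that with probability $1-o(1)$, writing $\Delta:=2x(\min p_j)^{-1/2}+x^2$, we have for all $\sigma$ and all $i$
\[
\bigl|z_i(v[\sigma]\cdot v')-w_i(\{v[\sigma]\})\cdot P^{-1}w_i(\{v'\})\bigr|<\Delta+o(1),\qquad \bigl|z_i(v[\sigma]\cdot v[\sigma])-w_i(\{v[\sigma]\})\cdot P^{-1}w_i(\{v[\sigma]\})\bigr|<\Delta+o(1).
\]
Setting $A_{\sigma,i}:=z_i(v[\sigma]\cdot v[\sigma])-2z_i(v[\sigma]\cdot v')$ and $a_{\sigma,i}:=w_i(\{v[\sigma]\})\cdot P^{-1}w_i(\{v[\sigma]\})-2w_i(\{v[\sigma]\})\cdot P^{-1}w_i(\{v'\})$, this yields $|A_{\sigma,i}-a_{\sigma,i}|<3\Delta+o(1)$. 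Since $w_i(\{v[\sigma^\star]\})=w_i(\{v'\})$ for every $i$ (each depends only on the community, which is common), one gets the exact identity $a_{\sigma,i}-a_{\sigma^\star,i}=(w_i(\{v[\sigma]\})-w_i(\{v'\}))\cdot P^{-1}(w_i(\{v[\sigma]\})-w_i(\{v'\}))\ge 0$, and for $\sigma\ne\sigma^\star$ this is nonzero for at least one $i$ (because $Q$ has distinct rows, so $Q(\overrightarrow{\{v[\sigma]\}}-\overrightarrow{\{v'\}})\ne0$), hence $\ge 13\Delta$ by hypothesis.

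The last step is to verify the two margin inequalities defining step~(2) of the algorithm. For $\sigma^\star$: for every $\sigma'\ne\sigma^\star$ and every $i$, $A_{\sigma^\star,i}-A_{\sigma',i}\le (a_{\sigma^\star,i}-a_{\sigma',i})+6\Delta+o(1)\le 6\Delta+o(1)<\frac{19}{3}\Delta$ for $n$ large, so $\sigma^\star$ satisfies the condition. For any $\sigma\ne\sigma^\star$, picking an index $i$ with $a_{\sigma,i}-a_{\sigma^\star,i}\ge 13\Delta$ gives $A_{\sigma,i}-A_{\sigma^\star,i}\ge 13\Delta-6\Delta-o(1)=7\Delta-o(1)>\frac{19}{3}\Delta$ for $n$ large, so the required inequality fails for $\sigma$ with the witness $\sigma'=\sigma^\star$, and $\sigma$ is not selected. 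Hence $\sigma^\star$ is the unique community meeting the condition, the algorithm does not Fail, and it outputs $\sigma^\star$ with probability $1-o(1)$. The substantive estimate is already packaged inside the Decomposition Equation Lemma, so the only real care needed here is the bookkeeping of the $o(1)$ error terms and checking that the numerical constants $3$, $6$, $13$, and $\frac{19}{3}$ leave the required slack ($6<\frac{19}{3}<7$); that arithmetic, and the matching of goodness hypotheses when the lemma is applied to $v[\sigma]$ against itself, is where I would be most careful.
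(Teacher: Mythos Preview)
Your proposal is correct and follows essentially the same approach as the paper's proof: bound the running time via the cost of the $k\eta$ calls to $\mathrm{compute}\_N_{r+j,r'[E]}$, then invoke the Decomposition Equation Lemma to control each $z_i$ within (slightly more than) $\Delta=2x(\min p_j)^{-1/2}+x^2$ of its target, and finally check that the numerical slack between $6\Delta$, $\frac{19}{3}\Delta$, and $13\Delta-6\Delta=7\Delta$ forces $\sigma^\star$ to be the unique community satisfying the algorithm's test. Your bookkeeping of the $o(1)$ terms and the verification that $(r+\eta,x)$-goodness of $v[\sigma]$ implies $(r'+1,x)$-goodness (needed for the precomputed $z_i(v[\sigma]\cdot v[\sigma])$) are in fact more explicit than the paper's version, which absorbs the $o(1)$ into a factor $\frac{21}{20}$ and asserts the resulting ``iff'' directly.
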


\begin{proof}
Again, the slowest step of the algorithm is using $compute\_N_{r+j,r'[E]}(v[\sigma]\cdot v')$ in order to calculate the constant terms for the equations. This runs in an average time of  $O((d+1)E[|N_{r'[G\backslash E]}(v')|])= O(((1-c)\lambda_1)^{r'})$ and must be done $k\eta$ times. If the conditions given above are satisifed, then each $z_i$ is within  $\frac{21}{20}(2x(\min p_j)^{-1/2}+x^2)$ of the product it seeks to approximate with probability $1-o(1)$. If this is the case, then \[z_i(v[\sigma]\cdot v[\sigma])-2z_i(v[\sigma]\cdot v')\le z_i(v[\sigma']\cdot v[\sigma])-2z_i(v[\sigma']\cdot v') +\frac{19}{3}\cdot (2x(\min p_j)^{-1/2}+x^2)\]
 iff 
\begin{align*}
&2w_i(\{v'\})\cdot P^{-1}w_i(\{v[\sigma]\})-w_i(\{v[\sigma]\})\cdot P^{-1}w_i(\{v[\sigma]\})\\
&\ge 2w_i(\{v'\})\cdot P^{-1}w_i(\{v[\sigma']\})-w_i(\{v[\sigma']\})\cdot P^{-1}w_i(\{v[\sigma']\})
\end{align*}
 This holds for all $i$ and $\sigma'$ iff $v'$ is in the same community as $v[\sigma]$, so the algorithm returns the correct result with probability $1-o(1)$.
\end{proof}

At this point, we can finally start giving algorithms for classifying a graph's vertices.
\begin{algorithm}
Unreliable\_graph\_classification\_algorithm(G,c,m,$\epsilon$,x):

\phantom{xxx} Randomly assign each edge in $G$ to $E$ independently with probability $c$.

\phantom{xxx} Randomly select $m$ vertices in $G$, $v[0],...,v[m-1]$.

\phantom{xxx} Let $r=(1-\frac{\epsilon}{3})\log n/\log ((1-c)\lambda_1)-\eta$ and $r'=\frac{2\epsilon}{3}\cdot \log n/\log ((1-c)\lambda_1)$

\phantom{xxx} Compute $N_{r''[G\backslash E]}(v[i])$ for each $r''\le r+\eta$ and $0\le i<m$.

\phantom{xxx} Run $\text{vertex\_comparison\_algorithm}(v[i],v[j],r,r',E,x)$ for every $i$ and $j$

\phantom{xxx} If these give results consistent with some community memberships which indicate that there is at least one vertex in each community in $v[]$, randomly select one alleged member of each community $v'[\sigma]$. Otherwise, fail.

\phantom{xxx} For every $v''$ in the graph, compute $N_{r''[G\backslash E]}(v'')$ for each $r''\le r'$. Then, run\newline $\text{Vertex\_classification\_algorithm}(v'[],v'', r,r',E,x)$ in order to get a hypothesized classification of $v''$

\phantom{xxx} Return the resulting classification.
\end{algorithm}

\begin{lemma}
For $\epsilon<1$ this algorithm runs in $O(m^2 n^{1-\frac{\epsilon}{3}}+n^{1+\frac{2}{3}\epsilon})$ average time. Assume that all of the following hold:
\begin{align*}
&(1-c)\lambda_{\eta}^4>4\lambda_1^3\\
&0<x<\frac{\lambda_1k}{\lambda_{\eta}\min p_i}\\
&(2(1-c)\lambda_1^3/\lambda_{\eta}^2)^{1-\epsilon/3}<(1-c)\lambda_1\\
&(1+\epsilon/3)>\log((1-c)\lambda_1)/\log ((1-c)\lambda_{\eta}^2/\lambda_1)\\
&13(2x(\min p_j)^{-1/2}+x^2)<\min_{\ne0} (w_i(\{v\})- w_i(\{v'\}))\cdot P^{-1}(w_i(\{v\})- w_i(\{v'\}))
\end{align*}

Let $y=2ke^{-\frac{x^2(1-c)\lambda_{\eta}^2\min p_i}{16\lambda_1k^{3/2}((\min p_i)^{-1/2}+x)}}/\left(1-e^{-\frac{x^2(1-c)\lambda_{\eta}^2\min p_i}{16\lambda_1k^{3/2}((\min p_i)^{-1/2}+x)}\cdot((\frac{(1-c)\lambda_{\eta}^4}{4\lambda_1^3})-1)}\right)$

With probability $1-o(1)$, $G$ is such that $\text{Unreliable\_graph\_classification\_algorithm}(G,c,m,\epsilon,x)$ has at least a \[1-k(1-\min p_i)^m-my\] chance of classifying at least $1-y$ of $G$'s vertices correctly.

\end{lemma}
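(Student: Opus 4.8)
\medskip
\noindent\textbf{Proof plan.}

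I would first dispose of the running time. The choice $r=(1-\epsilon/3)\log n/\log((1-c)\lambda_1)-\eta$ gives $((1-c)\lambda_1)^{r+\eta}=n^{1-\epsilon/3}$, so by the ball-size lemma (applied to $G\backslash E$, which is a sample from $\gss(n,p,(1-c)Q)$) each ball $N_{r''[G\backslash E]}(v[i])$ with $r''\le r+\eta$ has $O(n^{1-\epsilon/3})$ vertices on average, and the breadth-first explorations computing them for the $m$ sampled vertices cost $O(m\,n^{1-\epsilon/3})$. Likewise $((1-c)\lambda_1)^{r'}=n^{2\epsilon/3}$, so each call to the vertex-comparison algorithm costs $O(n^{2\epsilon/3})$, giving $O(m^2 n^{2\epsilon/3})$ for the $m^2$ comparisons, while exploring $N_{r''[G\backslash E]}(v'')$ for $r''\le r'$ at every vertex $v''$ and running one vertex-classification call per vertex costs $O(n^{1+2\epsilon/3})$. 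Since $\epsilon<1$ makes $2\epsilon/3<1-\epsilon/3$, the first two terms are absorbed into $O(m^2 n^{1-\epsilon/3})$ and the total is $O(m^2 n^{1-\epsilon/3}+n^{1+2\epsilon/3})$.

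Next I would set up the reduction to $\gss(n,p,(1-c)Q)$: placing each edge of $G$ in $E$ independently with probability $c$ makes $G\backslash E$ exactly a sample from $\gss(n,p,(1-c)Q)$, with $E$ independent of it up to an $O(1/n^2)$ discrepancy per vertex pair (as noted before the decomposition equation lemma), so every earlier lemma applies with each $\lambda_i$ replaced by $(1-c)\lambda_i$. I would then check that the five displayed hypotheses are exactly what those lemmas need: $(1-c)\lambda_\eta^4>4\lambda_1^3$, $0<x<\lambda_1 k/(\lambda_\eta\min p_i)$ and $(2(1-c)\lambda_1^3/\lambda_\eta^2)^{1-\epsilon/3}<(1-c)\lambda_1$ translate verbatim into the hypotheses of the goodness and good/bad lemmas; the decomposition equation lemma needs $(1-c)\lambda_\eta^2>\lambda_1$, which follows because $(1-c)\lambda_\eta^4>4\lambda_1^3$ together with $\lambda_\eta\le\lambda_1$ forces $(1-c)\lambda_1>4$ and hence $((1-c)\lambda_\eta^2)^2=(1-c)\cdot(1-c)\lambda_\eta^4>4(1-c)\lambda_1\cdot\lambda_1^2>\lambda_1^2$, plus $r\ge r'$ (immediate from $\epsilon<1$) and $r+r'=(1+\epsilon/3)\log n/\log((1-c)\lambda_1)-\eta\ge(1+\epsilon'')\log n/\log((1-c)\lambda_\eta^2/\lambda_1)$ for some $\epsilon''>0$, which is precisely the hypothesis $(1+\epsilon/3)>\log((1-c)\lambda_1)/\log((1-c)\lambda_\eta^2/\lambda_1)$; finally the separation hypothesis $13(2x(\min p_j)^{-1/2}+x^2)<\min_{\ne0}(w_i(\{v\})-w_i(\{v'\}))\cdot P^{-1}(w_i(\{v\})-w_i(\{v'\}))$ is what the vertex-comparison and vertex-classification lemmas demand.

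For the combinatorial heart, the goodness lemma applied to $G\backslash E$ yields $R(n)=\omega(1)$ and a constant $y'<y$ (with $y$ the quantity in the statement) such that w.p.\ $1-o(1)$ over $(G,E)$ at least $(1-y')n$ vertices are $(R(n),x)$-good; the good/bad lemma then gives $\E[\#\{v:(R(n),x)\text{-good but }(r+\eta,x)\text{-bad}\}]=o(n)$, so w.p.\ $1-o(1)$ at least $(1-y')n-o(n)$ vertices are $(r+\eta,x)$-good, hence also $(r'+1,x)$-good since $r'+1\le r+\eta$. A routine Fubini/Markov argument pushes this to: for $1-o(1)$ of the graphs $G$, the conditional probability over $E$ that $G\backslash E$ has at least $(1-y')n-o(n)$ vertices $(r+\eta,x)$-good is $1-o(1)$; I fix such a $G$ and condition on that inner event. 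Over the remaining coins (the $m$ sampled vertices and the tie-breaking) let $A$ be the event that $\{v[i]\}$ meets every community ($\pp[A^c]\le k(1-\min p_i)^m$ up to $o(1)$ community-size fluctuations) and $B$ the event that each $v[i]$ is $(r+\eta,x)$-good (a uniform vertex is bad with probability $\le y'+o(1)$, so $\pp[B^c]\le m(y'+o(1))$). On $A\cap B$ the vertex-comparison lemma makes each of the $\le m^2$ comparisons correct w.p.\ $1-o(1)$, hence all correct w.p.\ $1-o(1)$ in the relevant range of $m$; then step 6 recovers the communities' intersections with $\{v[i]\}$, which cover all $k$ communities by $A$, so it selects a $(r+\eta,x)$-good representative $v'[\sigma]$ of each community, and the vertex-classification lemma then classifies each $(r'+1,x)$-good vertex correctly w.p.\ $1-o(1)$, giving $\E[\#\text{misclassified }(r'+1,x)\text{-good vertices}]=o(n)$, which is $o(n)$ w.p.\ $1-o(1)$. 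Thus off an event of probability at most $k(1-\min p_i)^m+m(y'+o(1))+o(1)$ the algorithm classifies at least $(1-y')n-o(n)$ vertices correctly, and since $y'<y$ is a fixed constant this is $\ge(1-y)n$ with failure probability $\le k(1-\min p_i)^m+my$ once $n$ is large.

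The main obstacle is not any single estimate but the bookkeeping: squeezing all the stray $o(1)$ and $o(n)$ losses --- from lifting goodness from radius $R(n)$ up to radius $r+\eta$ via the good/bad lemma, from the union bound over the $m^2$ pairwise comparisons, and from the per-vertex classification --- into the clean bound $1-k(1-\min p_i)^m-my$, which works exactly because the goodness lemma supplies a constant $y'$ strictly below $y$, while simultaneously keeping the two-level probability structure straight (outer over $G$; inner over the algorithm's coins for $E$, the sampled vertices, and tie-breaking) when invoking lemmas that are phrased over the joint law of $(G,E)$.
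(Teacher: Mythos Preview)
Your proposal is correct and follows essentially the same route as the paper's own proof: bound the running time by the BFS explorations plus the per-pair and per-vertex comparison costs, invoke the goodness lemma (applied to $G\backslash E\sim\gss(n,p,(1-c)Q)$) together with the good/bad lemma to get a $y'<y$ fraction of $(r+\eta,x)$-good vertices w.h.p., then union-bound the failure of the $m$ samples via the events $A$ (all communities hit) and $B$ (all samples good), and finish with the vertex-comparison and vertex-classification lemmas. If anything you are more explicit than the paper on two points it glosses over---the Fubini/Markov transfer from the joint $(G,E)$ law to a statement about most $G$'s, and the derivation of the decomposition-equation hypothesis $(1-c)\lambda_\eta^2>\lambda_1$ from $(1-c)\lambda_\eta^4>4\lambda_1^3$---but the underlying argument is the same.
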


\begin{proof}
Generating $E$ and $v[]$ takes $O(n)$ time. Computing $N_{r''[G\backslash E]}(v[i])$ for all $r''\le r+\eta$ takes $O(m|\cup_{r''} N_{r''[G\backslash E]}(v[i]))=O(mn)$ time, and computing $N_{r''[G\backslash E]}(v')$ for all $r''\le r'$ and $v'\in G$ takes $$O(n|\cup_{r''\le r'} N_{r''[G\backslash E]})=O(n\cdot ((1-c)\lambda_1)^{r'})=O(n^{1+\frac{2}{3}\epsilon})$$ time. Once these have been computed, running $\text{Vertex\_comparison\_algorithm}(v[i],v[j],r,r',E,x)$ for every $i$ and $j$ takes $O(m^2\cdot ((1-c)\lambda_1)^r)= O(m^2 n^{1-\frac{\epsilon}{3}})$ time, at which point an alleged member of each community can be found in $O(m^2)$ time.  Running $\text{Vertex\_classification\_algorithm}$ on $(v'[],v'', r,r',E,x)$ for every $v''\in G$ takes $O(n\cdot ((1-c)\lambda_1)^{r'})=O(n^{1+\frac{2}{3}\epsilon})$ time. So, the overall algorithm runs in $O(m^2n^{1-\frac{\epsilon}{3}}+n^{1+\frac{2}{3}\epsilon})$ average time.

There exists $y'<y$ such that if these conditions hold, then with probability $1-o(1)$, at least $1-y'$ of $G$'s vertices are $(r+\eta,x)$-good and the number of vertices in $G$ in community $\sigma$ is within $\sqrt{n}\log n$ of $p_\sigma n$ for all $\sigma$. If this is the case, then for sufficiently large $n$, it is at least $1-k(1-\min p_i)^m-my$ likely that every one of the $m$ randomly selected vertices is $(r+\eta,x)$-good and at least one is selected from each community. If that happens, then with probability $1-o(1)$, $\text{vertex\_comparison\_algorithm}(v[i],v[j],r,r',E,x)$ determines whether or not $v[i]$ and $v[j]$ are in the same community correctly for every $i$ and $j$, allowing the algorithm to pick one member of each community. If that happens, then the algorithm will classify each $(r'+\eta,x)$-good vertex correctly with probability $1-o(1)$. So, as long as the initial selection of $v[]$ is good, the algorithm classifies at least $1-y$ of the graph's vertices correctly with probability $1-o(1)$.
\end{proof}

So, this algorithm can sometimes give a vertex classification that is nontrivially better than that obtained by guessing but it has an assymptotically nonzero failure rate. In order to get around that, we combine the results of multiple executions of the algorithm as follows.
\begin{algorithm}
Reliable\_graph\_classification\_algorithm(G,c,m,$\epsilon$,x,T(n)) (i.e., {\tt Sphere-comparison}):

\phantom{xxx} Run $\text{Unreliable\_graph\_classification\_algorithm}(G,c,m,\epsilon,x)$ $T(n)$ times and record the resulting classifications.

\phantom{xxx} Discard any classification that has greater than $$4ke^{-\frac{x^2(1-c)\lambda_{\eta}^2\min p_i}{16\lambda_1k^{3/2}((\min p_i)^{-1/2}+x)}}/\left(1-e^{-\frac{x^2(1-c)\lambda_{\eta}^2\min p_i}{16\lambda_1k^{3/2}((\min p_i)^{-1/2}+x)}\cdot((\frac{(1-c)\lambda_{\eta}^4}{4\lambda_1^3})-1)}\right)$$  disagreement with more than half of the other classifications. In this step, define the disagreement between two classifications as the minimum disagreement over all bijections between their communities.

\phantom{xxx} For every vertex in $G$, randomly pick one of the remaining classifications and assert that it is in the community claimed by that classification, where a community from one classification is assumed to correspond to the community it has the greatest overlap with in each other classification.

\phantom{xxx} Return the resulting combined classification.
\end{algorithm}

\begin{lemma}
For $\epsilon<1$ this algorithm runs in $O(m^2 n^{1-\frac{\epsilon}{3}}T(n)+n^{1+\frac{2}{3}\epsilon}T(n)+nT^2(n))$ average time. Let $y=2ke^{-\frac{x^2(1-c)\lambda_{\eta}^2\min p_i}{16\lambda_1k^{3/2}((\min p_i)^{-1/2}+x)}}/\left(1-e^{-\frac{x^2(1-c)\lambda_{\eta}^2\min p_i}{16\lambda_1k^{3/2}((\min p_i)^{-1/2}+x)}\cdot((\frac{(1-c)\lambda_{\eta}^4}{4\lambda_1^3})-1)}\right)$, and assume that all of the following hold:
\begin{align*}
&(1-c)\lambda_{\eta}^4>4\lambda_1^3\\
&0<x<\frac{\lambda_1k}{\lambda_{\eta}\min p_i}\\
&(2(1-c)\lambda_1^3/\lambda_{\eta}^2)^{1-\epsilon/3}<(1-c)\lambda_1\\
&(1+\epsilon/3)>\log((1-c)\lambda_1)/\log ((1-c)\lambda_{\eta}^2/\lambda_1)\\
&k(1-\min p_i)^m+my<\frac{1}{2}\\
&T(n)=\omega(1)\\
&\min p_i>6y\\
&13(2x(\min p_j)^{-1/2}+x^2)<\min_{\ne0} (w_i(\{v\})- w_i(\{v'\}))\cdot P^{-1}(w_i(\{v\})- w_i(\{v'\}))
\end{align*}

$\text{Reliable\_graph\_classification\_algorithm}(G,c,m,\epsilon,x,T(n))$ classifies as least $$1-2y$$ of $G$'s vertices correctly with probability $1-o(1)$.
\end{lemma}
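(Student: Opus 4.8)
\emph{Proof strategy.} The plan is to treat the $T(n)$ executions of \texttt{Unreliable\_graph\_classification\_algorithm} as i.i.d.\ experiments (conditionally on $G$), argue that a strict majority of them ``succeed'', and then verify that neither post-processing step — discarding outlier classifications, and the per-vertex random vote with max-overlap relabelling — loses more accuracy than the slack we have available. Throughout I condition on the event (of probability $1-o(1)$ over $G$, supplied by the previous lemma) that $G$ has at least $1-y'$ good vertices for the fixed $y'<y$ produced there and that every community $i$ has size $p_i n\pm\sqrt n\log n$; fix also some $y^\star$ with $y'<y^\star<y$.

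First the running time: the $T(n)$ runs cost $O\big((m^2n^{1-\epsilon/3}+n^{1+2\epsilon/3})T(n)\big)$ by the previous lemma; the discard step computes the disagreement of all $O(T(n)^2)$ pairs, and a single disagreement (a minimum over the $k!=O(1)$ bijections of $[k]$ of a count of mismatches) costs $O(n)$, giving $O(nT^2(n))$; forming all the max-overlap relabellings and casting the votes costs $O(nT(n))$. Summing yields the claimed bound. Next, on the conditioned event each run independently classifies at least $1-y^\star$ of the vertices correctly (for $n$ large) with probability at least $1-k(1-\min p_i)^m-my$, which is $\tfrac12+c_0$ for some constant $c_0>0$ since $k(1-\min p_i)^m+my<\tfrac12$; call such a run \emph{good}. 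As $T(n)=\omega(1)$, a Chernoff bound shows that with probability $1-o(1)$ at least $(\tfrac12+c_0/2)T(n)$ runs are good — a strict majority whose size tends to infinity.

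Now the discard step. Two good runs disagree on at most $2y^\star<2y$ of the vertices, so (good runs being a growing strict majority) each good run is within the threshold $2y$ of more than half of the others and therefore survives. Conversely a surviving run is within $2y$ of at least half of the others; since fewer than half the runs are bad it is within $2y$ of some good run, hence within $2y+y^\star$ of the ground-truth partition. Using $\min p_i>6y$ together with the community-size concentration, the max-overlap map $\phi_i$ that the algorithm uses to align $\sigma[i]$ to the reference survivor $\sigma[1]$ is \emph{forced} to equal the composition of the optimal $\sigma[i]\to\text{truth}$ correspondence with the optimal $\text{truth}\to\sigma[1]$ correspondence: every block is too large to be captured by a wrong block under an error of size $<6y$, and the first survivor $\sigma[1]$ is itself within $2y+y^\star<6y$ of truth. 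Consequently, after relabelling truth by the $\text{truth}\to\sigma[1]$ correspondence, $\phi_i(\sigma[i])$ disagrees with truth on exactly the quotiented fraction $\mathrm{disagree}(\sigma[i],\text{truth})$ — at most $y^\star$ if $\sigma[i]$ is good and at most $2y+y^\star$ if it is a surviving bad run.

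Finally, combine. Let $T'$ be the number of survivors and $G^\star\ge(\tfrac12+c_0/2)T(n)\ge(\tfrac12+c_0/2)T'$ the number of good (hence surviving) runs. Averaging over the independent per-vertex choices of survivor, the expected fraction of misclassified vertices is $\frac1{T'}\sum_{i\text{ surviving}}\mathrm{disagree}(\phi_i(\sigma[i]),\text{truth})\le y^\star+2y\cdot\frac{T'-G^\star}{T'}\le y^\star+2y(\tfrac12-c_0/2)=y^\star+y-c_0y<2y$. Since the per-vertex choices are independent across the $n$ vertices, Hoeffding's inequality gives that the actual number of misclassified vertices exceeds its mean by at most $\sqrt n\log n=o(n)$ with probability $1-o(1)$, so at least $1-2y$ of $G$'s vertices are classified correctly. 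The main obstacle is precisely the relabelling bookkeeping of the previous paragraph: one must be certain that aligning each survivor to the only approximately-correct reference $\sigma[1]$ is as good as aligning it to the truth, which is exactly where the hypothesis $\min p_i>6y$ is spent; everything else is a Chernoff bound over the $T(n)$ runs and a Hoeffding bound over the $n$ vertices.
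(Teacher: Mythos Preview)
Your proposal is correct and follows essentially the same approach as the paper: show a strict majority of the $T(n)$ runs are ``good'' (error at most $y$), verify that good runs survive the discard step while any run with error exceeding $3y$ is eliminated, use $\min p_i>6y$ to ensure the max-overlap relabelling to $\sigma[1]$ coincides with the true correspondence, and average over survivors. You are in fact more explicit than the paper on two points the paper leaves implicit: the Chernoff bound guaranteeing a strict majority of good runs (the paper just asserts ``the majority of the classifications it generates will be good with probability $1-o(1)$''), and the Hoeffding concentration for the per-vertex random vote (the paper simply states the final ``misclassification rate less than $2y$'' without separating the expectation from the high-probability bound). Your bookkeeping with $y^\star<y$ and the explicit computation $y^\star+2y\cdot\frac{T'-G^\star}{T'}\le y^\star+y-c_0y<2y$ makes the constant gap needed for Hoeffding visible.
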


\begin{proof}
It takes $O(m^2 n^{1-\frac{\epsilon}{3}}T(n)+n^{1+\frac{2}{3}\epsilon}T(n))$ time to run $$\text{Unreliable\_graph\_classification\_algorithm}(G,c,m,\epsilon,x)$$ $T(n)$ times. It takes $O(n)$ time to determine the best bijection between two classification's communities and compute their disagreement. So, it takes $O(nT^2(n))$ time to compute all of the disagreements. Then, it takes $O(n)$ time to combine them and output the result. Therefore, this algorithm takes $O(m^2 n^{1-\frac{\epsilon}{3}}T(n)+n^{1+\frac{2}{3}\epsilon}T(n)+nT^2(n))$ average time.

Assuming the conditions are met, $G$ is such that Unreliable\_graph\_classification\_algorithm on $(G,c,m,\epsilon,x)$ has at least a \[1-k(1-\min p_i)^m-my>\frac{1}{2}\] chance of giving a good classification each time it is run with probability $1-o(1)$. Since $T(n)=\omega(1)$, the majority of the classifications it generates will be good with probability $1-o(1)$. Each good classification has error at most $y$, so any classification with error greater than $3y$ will have disagreement greater than $2y$ with every good classification. On the flip side, no two good classifications can have disagreement greater than $2y$. So, if the majority of the classifications are good, none of the good classifications will be discarded, and any classification with error greater than $3y$ will be discarded.  The requirement that $\min p_i>6y$ ensures that the bijection between any two of the remaining classifications' communities that minimizes their disagreement is the correct bijection. So, classifying each vertex according to one of the remaining bijections chosen at random has a misclassification rate less than $2y$. Therefore, with probability $1-o(1)$, this algorithm classifies at least $1-2y$ of the vertices correctly, as desired.
\end{proof}

\begin{proof}[Proof of Theorem \ref{thm1}]
If the conditions hold, then for all sufficiently small $c$, $\phantom{xxxxxxxxxxxxxxx}$ $\text{Reliable\_graph\_classification\_algorithm}(G,c,\ln(4k)/\min p_i,\epsilon,x,\log n)$ classifies at least  $$1-4ke^{-\frac{(1-c)x^2\lambda_{\eta}^2\min p_i}{16\lambda_1k^{3/2}((\min p_i)^{-1/2}+x)}}/(1-e^{-\frac{(1-c)x^2\lambda_{\eta}^2\min p_i}{16\lambda_1k^{3/2}((\min p_i)^{-1/2}+x)}\cdot((\frac{(1-c)\lambda_{\eta}^4}{4\lambda_1^3})-1)})$$ of $G$'s vertices corrrectly with probability $1-o(1)$. Furthermore, it runs in $O(n^{1+\frac{2}{3}\epsilon}\log n)$ time. Thus, we can get the accuracy arbitrarily close to $$1-4ke^{-\frac{x^2\lambda_{\eta}^2\min p_i}{16\lambda_1k^{3/2}((\min p_i)^{-1/2}+x)}}/(1-e^{-\frac{x^2\lambda_{\eta}^2\min p_i}{16\lambda_1k^{3/2}((\min p_i)^{-1/2}+x)}\cdot((\frac{\lambda_{\eta}^4}{4\lambda_1^3})-1)}),$$ as desired.
\end{proof}



\section{Exact recovery}\label{exact-sec}

Recall that $p$ is a probability vector of dimension $k$, $Q$ is a $k \times k$ symmetric matrix with positive entries, and 
$\gs(n,p,Q)$ denotes the stochastic block model with community prior $p$ and connectivity matrix $\ln(n)Q/n$. A random graph $G$ drawn under $\gs(n,p,Q)$ has a planted community assignment, which we denote by $\sigma \in [k]^n$ and call sometime the true community assignment.

Recall also that exact recovery is solvable for a community partition $[k] = \sqcup_{s=1}^t A_s$, if there exists an algorithm that assigns to each node in $G$ an element of $\{A_1,\dots,A_t\}$ that contains its true community\footnote{Up to a relabelling of the communities.} with probability $1-o_n(1)$.  Exact recovery is solvable in $SBM(n,p,Q)$ if it is solvable for the partition of $[k]$ into $k$ singletons, i.e., all communities can be recovered. 

\subsection{Formal results}
\begin{definition}
Let $\mu,\nu$ be two positive measures on a discrete set $\X$, i.e., two functions from $\mathcal{X}$ to $\mR_+$. We define the CH-divergence between $\mu$ and $\nu$ by 
\begin{align} 
\dd(\mu, \nu) :=\max_{t \in [0,1]} \sum_{x \in \X} \left( t \mu(x) + (1-t)\nu(x)- \mu(x)^t \nu(x)^{1-t} \right). \label{h-div}
\end{align}
\end{definition}
Note that for a fixed $t$, $$\sum_{x \in \X} \left( t \mu(x) + (1-t)\nu(x)- \mu(x)^t \nu(x)^{1-t} \right)$$ is an $f$-divergence. 
For $t=1/2$, i.e., the gap between the arithmetic and geometric means, we have
\begin{align}
\sum_{x \in \X} t\mu(x) + (1-t)\nu(x)- \mu(x)^t \nu(x)^{1-t} = \frac{1}{2} \| \sqrt{\mu}- \sqrt{\nu} \|_2^2
\end{align}
which is the Hellinger divergence (or distance), and the maximization over $t$ of the part $\sum_x \mu(x)^t \nu(x)^{1-t}$ is the exponential of to the Chernoff divergence. 
We refer to Section 8.3 for further discussions on $\dd$. Note also that we will often evaluate $\dd$ as $\dd(x,y)$ where $x,y$ are vectors instead of measures.

\begin{definition}
For the SBM $\gs(n,p,Q)$, where $p$ has dimension $k$ (i.e., there are $k$ communities), the finest partition of $[k]$ is the partition of $[k]$ in to the largest number of subsets such that $\dd((PQ)_i,(PQ)_j) \geq 1$ for all $i,j$ that are in different subsets.
\end{definition}

We next present our main theorem for exact recovery. We first provide necessary and sufficient conditions for exact recovery of partitions, and then provide an algorithm that solves exact recovery efficiently, more precisely, in quasi-linear time. 

\begin{theorem}\label{thm2}
Let $k \in \mZ_+$ denote the number of communities, $p \in (0,1)^k$ with $|p|=1$ denote the community prior, $P=\diag(p)$, and let $Q \in (0,\infty)^{k \times k}$ symmetric with no two rows equal.
\begin{itemize}
\item Exact recovery is solvable in the stochastic block model $\gs(n,p,Q)$ for a partition $[k] = \sqcup_{s=1}^t A_s$ if and only if for all $i$ and $j$ in different subsets of the partition,
\begin{align}
\dd ((PQ)_i , (PQ)_j) \geq 1, \label{d1}
\end{align}
where $(PQ)_i$ denotes the $i$-th row of the matrix $PQ$. In particular, exact recovery is solvable in $\gs(n,p,Q)$ if and only if $\min_{i,j \in [k], i \neq j} \dd ((PQ)_i , (PQ)_j) \geq 1$.
\item For $G \sim \gs(n,p,Q)$, the algorithm {\tt Degree-profiling}$(G,p,Q,\gamma)$ (see below\footnote{$\gamma=\gamma(n,p,Q)$ is set to $\frac{\Delta -1}{2\Delta} +\frac{\ln\ln n}{4 \ln n}$, where $\Delta = \min_{r,s \in [t] \atop{r \neq s}} \min_{ i \in A_r, j \in A_s}  \dd((pQ)_i, (pQ)_j)$ and $A_1,\dots,A_t$ is the finest partition of $[k]$.}) recovers the finest partition with probability $1-o_n(1)$ and runs in $o(n^{1+\epsilon})$ time for all $\epsilon>0$. 
\end{itemize} 
\end{theorem}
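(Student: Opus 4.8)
The plan is to prove the two bullets separately: the characterization as an information-theoretic lower bound driven by Lemma~\ref{hell-expo}, and the algorithmic claim as an analysis of {\tt Degree-profiling} bootstrapped from the partial-recovery guarantee of Theorem~\ref{thm1}. Throughout write $\theta_\ell:=(PQ)_\ell$.

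\textbf{Necessity.} Suppose $\dd(\theta_i,\theta_j)<1$ for some $i,j$ in different blocks of the target partition. Since revealing the labels of all but one vertex can only help, it suffices to show that the genie-aided MAP rule --- which classifies each $v$ optimally given $\sigma$ restricted to $V\setminus\{v\}$ --- misclassifies at least one vertex with probability $1-o_n(1)$. For a vertex $v$ with $\sigma_v\in\{i,j\}$ the sufficient statistic is its degree profile $d(v)\in\mZ_+^k$, which conditionally on $\sigma$ is within $o(1)$ in total variation of the product of Poissons with means $\ln(n)\theta_{\sigma_v}(x)$ over $x\in[k]$; the Bayes error of the resulting binary test is, up to constants, $\sum_{x\in\mZ_+^k}\min(\mathcal P_{\ln(n)\theta_i}(x)p_i,\mathcal P_{\ln(n)\theta_j}(x)p_j)$, which by Lemma~\ref{hell-expo} equals $n^{-\dd(\theta_i,\theta_j)-o(1)}=\omega(1/n)$. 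There are $\Theta(n)$ vertices in communities $i$ and $j$, and after conditioning on $\sigma$ and on the edges inside $i\cup j$ their misclassification events become conditionally independent for a linear-size subset of them, and still have probability $\omega(1/n)$ for most realizations of the conditioning; a second-moment estimate then forces at least one of them to occur whp. This yields the ``only if'' direction, hence also the displayed inequality as a necessary condition.

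\textbf{Sufficiency and the algorithm.} Assume $\dd(\theta_i,\theta_j)\ge 1$ for all $i,j$ in different blocks of the finest partition, put $\Delta:=\min\dd(\theta_i,\theta_j)$ over such pairs (so $\Delta\ge1$), and take $\gamma$ as in the footnote. Split $G=g'\sqcup g''$ by keeping each edge of $G$ in $g'$ independently with probability $\gamma$; conditionally on $\sigma$, the graphs $g'\sim\mathrm{SBM}(n,p,\gamma\ln(n)Q/n)$ and $g''\sim\mathrm{SBM}(n,p,(1-\gamma)\ln(n)Q/n)$ are independent. The choice of $\gamma$ makes the average degree of $g'$ grow with $n$ (even when $\Delta=1$, where $\gamma\sim\frac{\ln\ln n}{4\ln n}$), so the growing-degree regime of Theorem~\ref{thm1} applied to $g'$ yields a labelling $\sigma'$ agreeing with $\sigma$, up to relabelling communities, on a $1-o_n(1)$ fraction of $V$ whp; the edge densities between the alleged communities then recover $p$ and $Q$ up to symmetry, so we may treat $\{\theta_\ell\}$ as known. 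Now refine twice using $g''$, which is independent of $\sigma'$. First, for each $v$ compute its degree profile in $g''$ against the labels $\sigma'$ and output the maximum-likelihood community $\sigma''_v$: since each of the $\Theta(\ln n)$ neighbours of $v$ is mislabelled with probability $o_n(1)$, a Chernoff bound shows the perturbed profile still points to the correct community except with probability $n^{-c}$ for some constant $c>0$. Second, for each $v$ compute its degree profile in $g''$ against the labels $\sigma''$ and output the maximum-likelihood block $A_s$ of the finest partition. The crucial estimate is that feeding the likelihood test neighbour labels that are individually wrong with probability $x$ multiplies the per-vertex error by at most $n^{\delta x}$ for a fixed $\delta=\delta(p,Q)$; with $x=O(n^{-c})$ this factor is $1+o_n(1)$, so by Lemma~\ref{hell-expo} and the degree-one homogeneity of $\dd$ the per-vertex error is $(1+o_n(1))\,n^{-(1-\gamma)\dd(\theta_i,\theta_j)-o(1)}$, and since $(1-\gamma)\dd(\theta_i,\theta_j)\ge(1-\gamma)\Delta\ge\frac{\Delta+1}{2}-o_n(1)\ge1-o_n(1)$ while the ``$-o(1)$'' in the exponent hides a genuinely vanishing polylogarithmic factor, this is $o(1/n)$. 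A union bound over the $n$ vertices gives exact recovery of the finest partition whp. For complexity, the split and the two refinement passes cost $O(n\ln n)$ since each vertex only reads its incident edges, while {\tt Sphere-comparison} runs in $O(n^{1+\epsilon})$ for any $\epsilon>0$ by Theorem~\ref{thm1}; so the total is $o(n^{1+\epsilon})$ for every $\epsilon>0$.

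\textbf{Main obstacle.} The heart of the matter is the last refinement round: one must show that substituting estimated labels for the true neighbour labels inflates the Bayes error of the degree-profile test by only a subpolynomial (indeed $1+o_n(1)$) factor, so that the sharp constant $1$ in the threshold survives rather than being eroded by any polynomial amount --- this is exactly what forces the three-level accuracy ladder $o_n(1)\to O(n^{-c})\to o(1/n)$, the edge splitting into $g'$ and $g''$ for independence, and the precise form of $\gamma$; and at the boundary $\Delta=1$ it crucially uses that the overlap in Lemma~\ref{hell-expo} is $n^{-\dd}$ times a vanishing factor, not merely $\Theta(n^{-\dd})$. A secondary difficulty is making the near-independence of the per-vertex misclassification events in the converse quantitative enough to support the concentration step.
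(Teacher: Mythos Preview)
Your overall architecture matches the paper's: two-step bootstrap on the achievability side (partial recovery on a subsampled graph, then two rounds of degree-profile refinement via Lemma~\ref{hell-expo} and its robust version), and a per-vertex Bayes-error lower bound on the converse side. Two points deserve correction or comment.

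\textbf{A false independence claim.} You write that after the edge split, ``conditionally on $\sigma$, the graphs $g'\sim\mathrm{SBM}(n,p,\gamma\ln(n)Q/n)$ and $g''\sim\mathrm{SBM}(n,p,(1-\gamma)\ln(n)Q/n)$ are independent.'' This is not true: an edge present in $g'$ is necessarily absent from $g''$, so the pair is negatively correlated; only the marginals are as you state. The paper flags exactly this point and disposes of it by observing that, conditional on no vertex having degree exceeding $c_3\ln n$ (which holds with probability $1-O(1/n)$), the conditional law of any vertex's neighbourhood in $g''$ given $g'$ differs from the independent-copy law by a factor $(1-\max q_{ij}\ln n/n)^{-c_3\ln n}(n/(n-c_3\ln n))^{c_3\ln n}=1+o(1)$. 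You need this patch; without it your ``$g''$ is independent of $\sigma'$'' step is unjustified.

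\textbf{Your converse differs from the paper's and is currently underspecified.} The paper does not run a genie/second-moment argument. Instead it draws a thin random set $S$ of $n/\ln^3 n$ vertices (so that, whp, vertices in $S$ are non-adjacent to one another), freezes the labels outside $S$, and calls a vertex of $S$ \emph{ambiguous} if its degree profile into $G\setminus S$ equals the fixed vector $x_\ell=\lfloor (PQ)_{\ell,i}^t(PQ)_{\ell,j}^{1-t}\ln n\rfloor$ at the maximising $t$; by the computation behind Lemma~\ref{hell-expo} such vertices occur in both communities $i$ and $j$ with probability $\Omega(n^{\epsilon-1})$ each, the events are exactly independent once $S$ and $\sigma$ are fixed, and whp at least $\ln n$ of them appear in each community with no $S$-internal edges---making them information-theoretically exchangeable, so any rule fails on at least one with probability $1-1/\binom{2\ln n}{\ln n}$. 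Your route (condition on $\sigma$ and on edges inside $i\cup j$, then claim conditional independence of the genie misclassification events for a linear-size subset) can be made to work, but as written it is vague: you have not said why the conditional per-vertex error remains $\omega(1/n)$ after fixing the $i\cup j$-internal edges, nor have you isolated a subset on which the remaining randomness is genuinely independent. The paper's $S$-device sidesteps both issues cleanly.
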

Note that second item in the theorem implies that {\tt Degree-profiling} solves exact recovery efficiently whenever the parameters $p$ and $Q$ allow for exact recovery to be solvable. In addition, it gives an operational meaning to a new divergence function, analog to operational meaning given to the KL-divergence in the channel coding theorem (see Section \ref{it-inter}).

\begin{remark}\label{qzero}
f $Q_{ij}=0$ for some $i$ and $j$ then the results above still hold, except that if for all $i$ and $j$ in different subsets of the partition,
\begin{align}
\dd ((PQ)_i , (PQ)_j) \geq 1, \label{d1}
\end{align}
but there exist $i$ and $j$  in different subsets of the partition such that $\dd ((PQ)_i , (PQ)_j) = 1$ and $((PQ)_{i,k}\cdot (PQ)_{j,k}\cdot ((PQ)_{i,k}-(PQ)_{j,k})=0$ for all $k$, then the optimal algorithm will have an assymptotically constant failure rate. The recovery algorithm also needs to be modified to accomodate $0$'s in $Q$.
\end{remark}

\begin{remark}
As shown in the proof of Theorem \ref{thm2}, when exact recovery is not solvable, any algorithm must confuse at least one vertex with probability $1-o_n(1)$, and not just with probability away from 0. Hence exact recovery has a sharp threshold at $\min_{i,j \in [k], i \neq j} \dd ((PQ)_i || (PQ)_j) = 1$.
\end{remark}

\begin{example}
For the symmetric block model where $p$ is equiprobable on $[k]$ and $Q$ takes only two different values, $\alpha$ on the diagonal and $\beta$ outside the diagonal (with $\alpha,\beta>0$), the requirement in Theorem \ref{thm2} for recovery of any (or all) communities is equivalent to  
\begin{align}
|\sqrt{\alpha} - \sqrt{\beta}| \geq \sqrt{k},
\end{align}
which generalizes the result obtained in \cite{abh,mossel-consist} for $k=2$.  
\end{example}

The algorithm {\tt Degree-profiling} is given in Section \ref{pt1} and replicated below. The idea is to recover the communities with a two-step procedure, similarly to one of the algorithms used in \cite{abh} for the two-community case. In the first step, we run {\tt Sphere-comparison} on a sparsified version of $\gs(n,p,Q)$ which has a slowly growing average degree. Hence, from Corollary \ref{partial-delta}, {\tt Sphere-comparison} recovers correctly a fraction of nodes that is arbitrarily close to 1 (w.h.p.). In the second step, we proceed to an improvement of the first step classification by making local checks for each node in the residue graph and deciding whether the node should be moved to another community or not. This step requires solving a hypothesis testing problem for deciding the local degree profile of vertices in the SBM. The CH-divergence appears when resolving this problem, as the mis-classification error exponent. We present this result of self-interest in Section \ref{testing}. The proof of Theorem \ref{thm2} is given in Section \ref{proof-thm-2}. \\

{\tt Degree-profiling} algorithm. \\
Inputs: a graph $g=([n],E)$, the SBM parameters $p_i$, $i \in [k]$, $Q_{i,j}$, $i,j \in [k]$, and a splitting parameter $\gamma \in [0,1]$ (see Theorem \ref{thm2} for the choice of $\gamma$).\\
Output: Each node $v \in [n]$ is assigned a community-list $A(v) \in \{A_1,\dots,A_t\}$, where $A_1,\dots,A_t$ is the partition of $[k]$ in to the largest number of subsets such that $\dd((pQ)_i,(pQ)_j) \geq 1$ for all $i,j$ in $[k]$ that are in different subsets.\\
Algorithm:\\
(1) Define the graph $g'$ on the vertex set $[n]$ by selecting each edge in $g$ independently with probability $\gamma$, and define the graph $g''$ that contains the edges in $g$ that are not in $g'$. \\
(2) Run {\tt Sphere-comparison} on $g'$ to obtain the preliminary classification $\sigma' \in [k]^n$ (see Section \ref{partial-sec} and Corollary \ref{partial-delta}.) \\
(3) Determine the edge density between each pair of alleged communities, and use this information and the alleged communities' sizes to attempt to identify the communities up to symmetry.\\
(4) For each node $v \in [n]$, determine in which community node $v$ is most likely to belong to based on its degree profile computed from the preliminary classification $\sigma'$ (see Section \ref{testing}), and call it $\sigma''_v$\\
(5) For each node $v \in [n]$, determine in which group $A_1,\dots,A_t$ node $v$ is most likely to belong to based on its degree profile computed from the preliminary classification $\sigma''$ (see Section \ref{testing}). 

\subsection{Testing degree profiles}\label{testing}
In this section, we consider the problem of deciding which community a node in the SBM belongs to based on its degree profile. We first make the latter terminology precise. 
\begin{definition}
The degree profile of a node $v \in [n]$ for a partition of the graph's vertices into $k$ communities is the vector $d(v) \in \mZ_+^k$, where the $j$-th component $d_j(v)$ counts the number of edges between $v$ and the vertices in community $j$.  Note that $d(v)$ is equal to $N_1(v)$ as defined in Definition \ref{def-n1}.
\end{definition}

For $G \sim \gs(n,p,Q)$, community $i \in [k]$ has a relative size that concentrates exponentially fast to $p_i$. Hence, for a node $v$ in community $j$, $d(v)$ is approximately given by $\sum_{i \in [k]} X_{ij}e_i$, 
where $X_{ij}$ are independent and distributed as $\bin(np_i,\ln(n)Q_{i,j}/n)$, and where $\bin(a,b)$ denotes\footnote{$\bin(a,b)$ refers to $\bin( \lfloor a \rfloor,b)$ if $a$ is not an integer.} the binomial distribution with $a$ trials and success probability $b$. Moreover, the Binomial is well-enough approximated by a Poisson distribution of the same mean in this regime. In particular, Le Cam's inequality gives 
\begin{align}
\left\| \bin \left(n a, \frac{\ln(n)}{n} b \right) -  \mathcal{P}\left(a b  \ln(n) \right) \right\|_{TV} \leq 2 \frac{a b^2 \ln^2(n)}{n},
\end{align}
hence, by the additivity of Poisson distribution and the triangular inequality, 
\begin{align}
\| \mu_{d(v)} -  \mathcal{P}(\ln(n) \sum_{i \in [k]} p_i Q_{i,j}e_i) \|_{TV} = O \left(\frac{\ln^2(n)}{n} \right).
\end{align} 
We will rely on a simple one-sided bound (see \eqref{bipo}) to approximate our events under the Poisson measure. 

Consider now the following problem. Let $G$ be drawn under the $\gs(n,p,Q)$ SBM and assume that the planted partition is revealed except for a given vertex. Based on the degree profile of that vertex, is it possible to classify the vertex correctly with high probability? We have to resolve a hypothesis testing problem, which involves multivariate Poisson distributions in view of the previous observations. We next study this problem.\\ 

{\bf Testing multivariate Poisson distributions.} Consider the following Bayesian hypothesis testing problem with $k$ hypotheses.
The random variable $H$ takes values in $[k]$ with $\pp\{H=j\}=p_j$ (this is the a priori distribution of $H$). Under $H=j$, an observed random variable $D$ is drawn from a multivariate Poisson distribution with mean $\lambda(j) \in \mR_+^k$, i.e.,  
\begin{align}
\pp\{D=d|H=j\}=\mathcal{P}_{\lambda(j)}(d), \quad d \in \mZ_+^k,
\end{align}
where 
\begin{align}
\mathcal{P}_{\lambda(j)}(d) = \prod_{i \in [k]} \mathcal{P}_{\lambda_i(j)}(d_i), 
\end{align}
and
\begin{align}
\mathcal{P}_{\lambda_i(j)}(d_i) = \frac{\lambda_i(j)^{d_i}}{d_i!} e^{- \lambda_i(j)}.
\end{align} 
In other words, $D$ has independent Poisson entries with different means. We use the following notation to summarize the above setting:
\begin{align}
D|H=j \,\, \sim \mathcal{P}(\lambda(j)), \quad j \in [k].
\end{align}
Our goal is to infer the value of $H$ by observing a realization of $D$. To minimize the error probability given a realization of $D$, we must pick the most likely hypothesis conditioned on this realization, i.e., 
\begin{align}
\argmax_{j \in [k]} \pp \{D=d | H=j\} p_j, \label{map-rule}
\end{align}
which is the Maximum A Posteriori (MAP) decoding rule.\footnote{Ties can be broken arbitrarily.} To resolve this maximization, we can proceed to a tournament of $k-1$ pairwise comparisons of the hypotheses. Each comparison allows us to eliminate one candidate for the maxima, i.e., 
\begin{align}
\pp \{D=d | H=i\} p_i >  \pp \{D=d | H=j\} p_j \quad \Rightarrow \quad H \neq  j. 
\end{align}
The error probability $P_e$ of this decoding rule is then given by,
\begin{align}
P_e = \sum_{i \in [k]} \pp\{ D \in \text{Bad}(i)  | H=i\} p_i, \label{error1}
\end{align}
where $\text{Bad}(i)$ is the region in $\mZ_+^k$ where $i$ is not maximizing \eqref{map-rule}. Moreover, for any $i \in [k]$, 
\begin{align}
\pp\{ D \in \text{Bad}(i) | H=i\} \leq \sum_{j \neq i} \pp\{ D \in \text{Bad}_j(i) |H=i\} \label{pair1}
\end{align}
where $\text{Bad}_j(i)$ is the region in $\mZ_+^k$ where $\pp \{D=x | H=i\} p_{i} \leq  \pp \{D=x | H=j\} p_{j}$. 
Note that with this upper-bound, we are counting the overlap regions where $\pp \{D=x | H=i\} p_{i} \leq  \pp \{D=x | H=j\} p_{j}$ for different $j$'s multiple times, but no more than $k-1$ times. Hence,  
\begin{align}
\sum_{j \neq i} \pp\{ D \in \text{Bad}_j(i) |H=i\}  \leq (k-1) \pp\{ D \in \text{Bad}(i) | H=i\}. \label{dbound1}
\end{align}
Putting \eqref{error1} and \eqref{pair1} together, we have 
\begin{align}
P_e &\leq  \sum_{i \neq j} \pp\{ D \in \text{Bad}_j(i)  | H=i\}   p_i,\\
 &=\sum_{i < j} \sum_{d \in \mZ_+^k} \min(\pp \{D=d | H=i\} p_{i} ,  \pp \{D=d | H=j\} p_{j}) \label{ub1}
\end{align}
and from \eqref{dbound1},
\begin{align}
P_e &\geq \frac{1}{k-1} \sum_{i < j} \sum_{d \in \mZ_+^k} \min(\pp \{D=d | H=i\} p_{i} ,  \pp \{D=d | H=j\} p_{j}). \label{ub2}
\end{align}
Therefore the error probability $P_e$ can be controlled by estimating the terms $\sum_{d \in \mZ_+^k} \min(\pp \{D=d | H=i\} p_{i} ,  \pp \{D=d | H=j\} p_{j})$. In our case, recall that  
\begin{align}
\pp \{D=d | H=i\} = \mathcal{P}_{\lambda(i)}(d),
\end{align}
which is a multivariate Poisson distribution. In particular, we are interested in the regime where $k$ is constant and $\lambda(i) =  \ln(n) c_i$, $c_i \in \mR_+^k$, and $n$ diverges. 
Due to \eqref{ub1}, \eqref{ub2}, we can then control the error probability by controlling $\sum_{x \in \mZ_+^k} \min(\mathcal{P}_{\ln(n) c_i}(x) p_{i} ,  \mathcal{P}_{\ln(n) c_j}(x) p_{j})$, which we will want to be $o(1/n)$ to classify vertices in the SBM correctly with high probability based on their degree profiles (see next section). The following lemma provides the relevant estimates.  

\begin{lemma}\label{hell-expo}
For any $c_1, c_2 \in (\mR_+\setminus \{0\})^k$ with $c_1 \neq c_2$ and $p_1,p_2 \in \mR_+\setminus \{0\}$, 
\begin{align}
& \sum_{x \in \mZ_+^k} \min(\mathcal{P}_{\ln(n) c_1}(x) p_{1} ,  \mathcal{P}_{\ln(n) c_2}(x) p_{2}) = O\left(n^{- \dd(c_1,c_2) - \frac{\ln\ln(n)}{2 \ln(n)}} \right),\\
& \sum_{x \in \mZ_+^k} \min(\mathcal{P}_{\ln(n) c_1}(x) p_{1} ,  \mathcal{P}_{\ln(n) c_2}(x) p_{2}) = \Omega \left(n^{- \dd(c_1,c_2) - \frac{k \ln\ln(n)}{2 \ln(n)}} \right),
\end{align}
where $\dd(c_1,c_2)$ is the CH-divergence as defined in \eqref{h-div}.
\end{lemma}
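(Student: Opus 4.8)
The plan is to squeeze the sum above and below by $n^{-\dd(c_1,c_2)}$ up to poly‑logarithmic factors, by comparing it with the Chernoff‑type quantity $\sum_x \mathcal{P}_{\ln(n)c_1}(x)^t\mathcal{P}_{\ln(n)c_2}(x)^{1-t}$ at the optimal tilting parameter. Writing $\lambda_j:=\ln(n)c_j$ and $D_t(c_1,c_2):=\sum_{x\in[k]}\bigl(tc_1(x)+(1-t)c_2(x)-c_1(x)^tc_2(x)^{1-t}\bigr)$ (so $\dd=\max_t D_t$), the first step is the exact product computation
\begin{align*}
\sum_{x\in\mZ_+^k}\mathcal{P}_{\lambda_1}(x)^t\mathcal{P}_{\lambda_2}(x)^{1-t}=\prod_{i\in[k]}e^{-t\lambda_{1,i}-(1-t)\lambda_{2,i}+\lambda_{1,i}^t\lambda_{2,i}^{1-t}}=n^{-D_t(c_1,c_2)},
\end{align*}
obtained from the one‑variable identity $\sum_{m\ge0}(\tfrac{\mu^m}{m!}e^{-\mu})^t(\tfrac{\nu^m}{m!}e^{-\nu})^{1-t}=e^{-t\mu-(1-t)\nu+\mu^t\nu^{1-t}}$. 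The same computation shows that the tilted measure $\mu_t(x):=n^{D_t(c_1,c_2)}\mathcal{P}_{\lambda_1}(x)^t\mathcal{P}_{\lambda_2}(x)^{1-t}$ is a product of independent Poissons with means $\ln(n)c^\ast_i$, $c^\ast_i:=c_{1,i}^tc_{2,i}^{1-t}$. Since $c_1\ne c_2$ and both are strictly positive, $t\mapsto D_t(c_1,c_2)$ is strictly concave with $D_0=D_1=0<D_{1/2}$, so its maximizer $t^\ast$ is unique, lies in $(0,1)$, and $D_{t^\ast}=\dd(c_1,c_2)$. I would then record, from the first‑order condition $\frac{d}{dt}D_t|_{t^\ast}=0$, the identity $\sum_i(c_{1,i}-c_{2,i}-c^\ast_i\ln(c_{1,i}/c_{2,i}))=0$ (now with $c^\ast_i=c_{1,i}^{t^\ast}c_{2,i}^{1-t^\ast}$), which says precisely that the log‑likelihood ratio $L(x):=\ln\frac{\mathcal{P}_{\lambda_1}(x)p_1}{\mathcal{P}_{\lambda_2}(x)p_2}$ has $\E_{\mu_{t^\ast}}[L]=O(1)$, while $\Var_{\mu_{t^\ast}}[L]=\ln(n)\sum_ic^\ast_i(\ln(c_{1,i}/c_{2,i}))^2=\Theta(\ln n)$.

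For the upper bound I would use, for every $x$, the exact identity $\min(\mathcal{P}_{\lambda_1}(x)p_1,\mathcal{P}_{\lambda_2}(x)p_2)=p_1^{t^\ast}p_2^{1-t^\ast}\mathcal{P}_{\lambda_1}(x)^{t^\ast}\mathcal{P}_{\lambda_2}(x)^{1-t^\ast}g(L(x))$, where $g(\ell)=e^{-t^\ast\ell}$ for $\ell\ge0$ and $g(\ell)=e^{(1-t^\ast)\ell}$ for $\ell<0$, so $0\le g\le1$, $g$ is unimodal, and $\|g\|_1=\tfrac1{t^\ast}+\tfrac1{1-t^\ast}<\infty$. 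After reordering the coordinates so that $c_{1,1}\ne c_{2,1}$, the function $L(x)$ is, for each fixed $(x_2,\dots,x_k)$, affine in $x_1$ with nonzero slope $\ln(c_{1,1}/c_{2,1})$; summing first over $x_1$ and invoking the elementary facts $\max_m\mathcal{P}_\mu(m)=O(1/\sqrt\mu)$ and $\sum_{m\in\mZ}g(a+bm)\le\|g\|_\infty+\|g\|_1/|b|$ for unimodal $g$ gives $\sum_{x_1\ge0}\mathcal{P}_{\ln(n)c^\ast_1}(x_1)g(L(x))=O(1/\sqrt{\ln n})$ uniformly in $x_2,\dots,x_k$. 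Summing the remaining coordinates against the probability measure $\prod_{i\ge2}\mathcal{P}_{\ln(n)c^\ast_i}$ then closes the bound: $\sum_x\min(\cdots)\le p_1^{t^\ast}p_2^{1-t^\ast}n^{-\dd(c_1,c_2)}O(1/\sqrt{\ln n})=O\bigl(n^{-\dd(c_1,c_2)-\frac{\ln\ln n}{2\ln n}}\bigr)$.

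For the lower bound I would keep only the single term $x=x^\ast$ with $x^\ast_i$ the nearest integer to $\ln(n)c^\ast_i$. The first‑order condition gives $\ln\frac{\mathcal{P}_{\lambda_1}(x^\ast)}{\mathcal{P}_{\lambda_2}(x^\ast)}=\sum_i(x^\ast_i\ln(c_{1,i}/c_{2,i})-\ln(n)(c_{1,i}-c_{2,i}))=O(1)$, so $\mathcal{P}_{\lambda_1}(x^\ast)\asymp\mathcal{P}_{\lambda_2}(x^\ast)$ and the $x^\ast$‑term is $\asymp\mathcal{P}_{\lambda_1}(x^\ast)$; Stirling then gives $\mathcal{P}_{\ln(n)c_{1,i}}(x^\ast_i)=\Theta((\ln n)^{-1/2})n^{-D(c^\ast_i\|c_{1,i})}$ with $D(a\|b):=a\ln(a/b)-a+b$, and a short computation using $\ln(c^\ast_i/c_{1,i})=-(1-t^\ast)\ln(c_{1,i}/c_{2,i})$ together with the first‑order condition yields $\sum_iD(c^\ast_i\|c_{1,i})=\sum_i(t^\ast c_{1,i}+(1-t^\ast)c_{2,i}-c^\ast_i)=\dd(c_1,c_2)$. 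Multiplying the $k$ coordinatewise estimates gives $\sum_x\min(\cdots)\ge(\mathrm{const})(\ln n)^{-k/2}n^{-\dd(c_1,c_2)}=\Omega\bigl(n^{-\dd(c_1,c_2)-\frac{k\ln\ln n}{2\ln n}}\bigr)$, as desired; the case $c_1=c_2$ is excluded by hypothesis.

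The main obstacle is the upper bound: the crude inequality $\min(a,b)\le a^{t^\ast}b^{1-t^\ast}$ only delivers $O(n^{-\dd(c_1,c_2)})$, and shaving off the extra $\sqrt{\ln n}$ requires controlling how tightly the log‑likelihood ratio $L$ concentrates under the tilted law. The feature that keeps this elementary — rather than demanding a multidimensional local central limit theorem — is that the tilted law is again a product of Poisson distributions, so a one‑dimensional mode estimate for a single coordinate suffices; one just has to be vigilant that every $O(1)$ additive error stays inside the sub‑polynomial $\ln\ln(n)/\ln(n)$ part of the exponent.
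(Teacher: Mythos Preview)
Your proposal is correct and follows essentially the same route as the paper's proof: both arguments tilt at the optimal $t^\ast$, recognize that the tilted measure is a product of Poissons with means $\ln(n)c_{1,i}^{t^\ast}c_{2,i}^{1-t^\ast}$, extract the extra $1/\sqrt{\ln n}$ in the upper bound by summing first over a coordinate with $c_{1,i}\ne c_{2,i}$, and get the lower bound by evaluating a single term at $x_i\approx\ln(n)c_{1,i}^{t^\ast}c_{2,i}^{1-t^\ast}$ via Stirling together with the first‑order condition. Your write‑up is in fact tidier than the paper's on a couple of points (you name the tilted law and the unimodal weight $g$ explicitly, and you handle the integer rounding in the lower bound rather than silently treating $\ln(n)c_i^\ast$ as an integer).
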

In other words, the CH-divergence provides the error exponent for deciding among multivariate Poisson distributions. We did not find this result in the literature, but found a similar result obtained by Verd\'u \cite{verdu-hell}, who shows that the Hellinger distance (the special case with $t=1/2$ instead of the maximization over $t$) appears in the error exponent for testing Poisson point-processes, although \cite{verdu-hell} does not investigate the exact error exponent.  

\begin{proof}[Proof of Lemma \ref{hell-expo}]
Assume without loss of generality that $c_{1,1}\ne c_{2,1}$. To prove the first half of the lemma, note that for any $t\in [0,1]$,
\begin{align}
& \sum_{x \in \mZ_+^k} \min(\mathcal{P}_{\ln(n) c_1}(x) p_{1} ,  \mathcal{P}_{\ln(n) c_2}(x) p_{2})\\
&\le \max(p_1,p_2)\sum_{x \in \mZ_+^k} \min(\mathcal{P}_{\ln(n) c_1}(x),  \mathcal{P}_{\ln(n) c_2}(x))\\
&=\max(p_1,p_2)\sum \min(e^{-\ln n\sum c_{1,i}}\prod (\ln n\cdot c_{1,i})^{x_i}/x_i!,e^{-\ln n\sum c_{2,i}}\prod (\ln n\cdot c_{2,i})^{x_i}/x_i!)\\
&=\max(p_1,p_2) e^{-\ln n\sum tc_{1,i}+ (1-t)c_{2,i} }\sum \left(\ln n\cdot c_{1,i}^t c_{2,i}^{1-t}\right)^{x_i}/x_i!\\
&\phantom{xxxxxx}\cdot \min\left(e^{-\ln n(1-t)\sum  c_{1,i}- c_{2,i}}\prod( c_{1,i}/ c_{2,i})^{(1-t)x_i}, e^{-\ln n t\sum  c_{2,i}- c_{1,i}}\prod( c_{2,i}/ c_{1,i})^{tx_i}\right)
\end{align}

For any choice of $x_2,...,x_k$, there must exist $x_1$ (not necessarily an integer) such that $e^{-\ln n(1-t)\sum  c_{1,i}- c_{2,i}}\prod( c_{1,i}/ c_{2,i})^{(1-t)x_i}=1$. As a result, the expression above must be less than or equal to

\begin{align}
&\max(p_1,p_2) e^{-\ln n\sum tc_{1,i}+ (1-t)c_{2,i} }\sum \left(\ln n\cdot c_{1,i}^t c_{2,i}^{1-t}\right)^{x_i}/x_i!\\
&\cdot \min\left ((c_{1,1}/c_{2,1})^{1-t},(c_{2,1}/c_{1,1})^t\right)^{|x_1-\ln n c_{1,1}^tc_{2,1}^{1-t}|-1}\\
&=O\left(e^{\ln n\sum (c_{1,i}^t c_{2,i}^{1-t}- tc_{1,i}- (1-t)c_{2,i})}/\sqrt{\ln n}\right)
\end{align} 
When $t$ is chosen to maximize $\sum tc_{1,i}+(1-t)c_{2,i}-c_{1,i}^tc_{2,i}^{1-t}$, this is $$O\left(n^{- \dd(c_1,c_2) - \frac{\ln\ln(n)}{2 \ln(n)}} \right).$$
To prove the second half, let $t$ maximize $\sum tc_{1,i}+(1-t)c_{2,i}-c_{1,i}^tc_{2,i}^{1-t}$. Hence $$\sum c_{1,i}-c_{2,i}-\ln (c_{1,i}/c_{2,i})c_{1,i}^tc_{2,i}^{1-t}=0.$$ This implies that 
\[e^{-\ln n\sum c_{1,i}}\prod (\ln n\cdot c_{1,i})^{\ln nc_{1,i}^tc_{2,i}^{1-t}}=e^{-\ln n\sum c_{2,i}}\prod (\ln n\cdot c_{2,i})^{\ln nc_{1,i}^tc_{2,i}^{1-t}}.\] As a result, 
\begin{align}
&\min(\mathcal{P}_{\ln(n) c_1}(c_{1,i}^tc_{2,i}^{1-t}\ln n) p_{1} ,  \mathcal{P}_{\ln(n) c_2}c_{1,i}^tc_{2,i}^{1-t}\ln n) p_{2})\\
&\ge\min(p_1,p_2) e^{-\ln n\sum c_{1,i}}\prod (\ln n\cdot c_{1,i})^{c_{1,i}^tc_{2,i}^{1-t}\ln n}/(c_{1,i}^tc_{2,i}^{1-t}\ln n)!\\
&=\min(p_1,p_2)e^{-\ln n\sum c_{2,i}}\prod (\ln n\cdot c_{2,i})^{c_{1,i}^tc_{2,i}^{1-t}\ln n}/(c_{1,i}^tc_{2,i}^{1-t}\ln n)!\\
&=\min(p_1,p_2)e^{-\ln n\sum tc_{1,i}+(1-t)c_{2,i}}\prod (\ln n\cdot c_{1,i}^tc_{2,i}^{1-t})^{c_{1,i}^tc_{2,i}^{1-t}\ln n}/(c_{1,i}^tc_{2,i}^{1-t}\ln n)!\\
&=\Omega(\min(p_1,p_2)e^{-\ln n\sum tc_{1,i}+(1-t)c_{2,i}-c_{1,i}^tc_{2,i}^{1-t}}/(\ln n)^{k/2})\\
&=\Omega \left(n^{- \dd(c_1,c_2) - \frac{k \ln\ln(n)}{2 \ln(n)}} \right).
\end{align}
Thus, 
\begin{align}
\sum_{x \in \mZ_+^k} \min(\mathcal{P}_{\ln(n) c_1}(x) p_{1} ,  \mathcal{P}_{\ln(n) c_2}(x) p_{2}) = \Omega \left(n^{- \dd(c_1,c_2) - \frac{k \ln\ln(n)}{2 \ln(n)}} \right) .
\end{align}
\end{proof}


This lemma together with previous bounds on $P_e$ imply that if $\dd(c_i,c_j) > 1$ for all $i \neq j$, the true hypothesis is correctly recovered with probability $o(1/n)$. However, it may be that $\dd(c_i,c_j) > 1$ only for a subset of $(i,j)$-pairs. What can we then infer? While we may not recover the true value of $H$ with probability $o(1/n)$, we may narrow down the search within a subset of possible hypotheses with that probability of error.\\

{\bf Testing composite multivariate Poisson distributions.} We now consider the previous setting, but we are no longer interested in determining the true hypothesis, but in deciding between two (or more) disjoint subsets of hypotheses. Under hypothesis 1, the distribution of $D$ belongs to a set of possible distributions, namely $\mathcal{P}(\lambda_i)$ where $i \in A$, and under hypothesis 2, the distribution of $D$ belongs to another set of distributions, namely $\mathcal{P}(\lambda_i)$ where $i \in B$. Note that $A$ and $B$ are disjoint subsets such that $A \cup B=[k]$. In short,
\begin{align}
D|\tilde{H}=1 \,\, \sim \mathcal{P}(\lambda_i), \,\,\, \text{for some } i \in A, \\
D|\tilde{H}=2 \,\, \sim \mathcal{P}(\lambda_i), \,\,\, \text{for some } i \in B,
\end{align}
and as before the prior on $\lambda_i$ is $p_i$. To minimize the probability of deciding the wrong hypothesis upon observing a realization of $D$, we must pick the hypothesis which leads to the larger probability between $\pp\{\tilde{H} \in A | D=d\}$ and $\pp\{\tilde{H} \in B | D=d\}$, or equivalently, 
\begin{align}
\sum_{i \in A} \mathcal{P}_{\lambda(i)}(d) p_{i} \geq \sum_{i \in B} \mathcal{P}_{\lambda(i)}(d) p_{i} \quad \Rightarrow \quad \tilde{H}=1,\\
\sum_{i \in A} \mathcal{P}_{\lambda(i)}(d) p_{i} < \sum_{i \in B} \mathcal{P}_{\lambda(i)}(d) p_{i} \quad \Rightarrow \quad \tilde{H}=2.
\end{align}
In other words, the problem is similar to previous one, using the above mixture distributions. 
If we denote by $\tilde{P}_e$ the probability of making an error with this test, we have 
\begin{align}
\tilde{P}_e &= \sum_{x \in \mZ_+^k} \min\left(\sum_{i \in A} \mathcal{P}_{\lambda(i)}(x) p_{i} , \sum_{i \in B} \mathcal{P}_{\lambda(i)}(x) p_{i} \right).
\end{align}
Moreover, applying bounds on the minima of two sums,   
\begin{align}
\tilde{P}_e & \leq  \sum_{x \in \mZ_+^k} \sum_{i \in A, j \in B} \min\left(\mathcal{P}_{\lambda(i)}(x) p_{i} ,  \mathcal{P}_{\lambda(j)}(x) p_{j}\right),\\
\tilde{P}_e & \geq  \frac{1}{|A||B|} \sum_{x \in \mZ_+^k} \sum_{i \in A, j \in B} \min\left(\mathcal{P}_{\lambda(i)}(x) p_{i} ,  \mathcal{P}_{\lambda(j)}.(x) p_{j}\right) .
\end{align} 
Therefore, for constant $k$ and $\lambda(i) =  \ln(n) c_i$, $c_i \in \mR_+^k$, with $n$ diverging, it suffices to control the decay of $\sum_{x \in \mZ_+^k}  \min(\mathcal{P}_{\lambda(i)}(x) p_{i} ,  \mathcal{P}_{\lambda(j)}(x) p_{j})$ when $i \in A$ and $j \in B$, in order to bound the error probability of deciding whether a vertex degree profile belongs to a group of communities or not. 

The same reasoning can be applied to the problem of deciding whether a given node belongs to a group of communities, with more than two groups. Also, for any $p$ and $p'$ such that $|p_j-p'_j|<\ln n/\sqrt{n}$ for each $j$, $Q$, $\gamma(n)$, and $i$, 
\begin{align}
\sum_{x\in \mZ_+^k}\max\left(Bin_{\left(np',\frac{(1-\gamma(n))\ln(n)}{n}Q_i\right)}(x)- 2\mathcal{P}_{PQ_i(1-\gamma(n))\ln(n)/n}(x),0\right)=O(1/n^2). \label{bipo} 
\end{align}
So, the error rate for any algorithm that classifies vertices based on their degree profile in a graph drawn from a sparse SBM is at most $O(1/n^2)$ more than twice what it would be if the probability distribution of degree profiles really was the poisson distribution.

In summary, we have proved the following. 
\begin{lemma}\label{testing-lemma}
Let $k \in \mZ_+$ and let $A_1,\dots,A_t$ be disjoint subsets of $[k]$ such that $\cup_{i=1}^t A_i = [k]$. Let $G$ be a random graph drawn under $\gs(n,p,(1-\gamma(n))Q)$. Assigning the most likely community subset $A_i$ to a node $v$ based on its degree profile $d(v)$ gives the correct assignment with probability $$1-O \left(n^{-(1-\gamma(n))\Delta -\frac{1}{2}\ln((1-\gamma(n))\ln n)/\ln n } +\frac{1}{n^2}\right),$$ 
where
\begin{align}
\Delta = \min_{r,s \in [t] \atop{r \neq s}} \min_{ i \in A_r, j \in A_s}  \dd((pQ)_i, (pQ)_j).
\end{align}
\end{lemma}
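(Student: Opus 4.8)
The plan is to reduce the classification of $v$ to the composite multivariate Poisson hypothesis test analyzed just above, to evaluate each pairwise overlap with Lemma~\ref{hell-expo}, and finally to transfer from the idealized Poisson model to the true Binomial degree profiles via \eqref{bipo}. First I would condition on the community sizes $n_1,\dots,n_k$ of $G$: a Chernoff bound shows that $n_i/n$ concentrates to $p_i$ exponentially fast, so outside an event of probability $o(1/n^2)$ one has $|n_i/n-p_i|<\ln(n)/\sqrt n$ for every $i$. Conditioned on the sizes, for a node $v$ with true label $\sigma_v=j$ the coordinates of $d(v)$ are independent with $d_i(v)\sim\bin(n_i,(1-\gamma(n))\ln(n)Q_{i,j}/n)$ (the coordinate $i=j$ uses $n_j-1$ trials, an asymptotically irrelevant correction); up to this correction the law of $d(v)$ given $\sigma_v=j$ is the Binomial vector of \eqref{bipo} with parameter $(1-\gamma(n))\ln(n)Q_j/n$, and the relevant prior weights are the $p_i$ (the $1+O(\ln n/\sqrt n)$ perturbation coming from the empirical sizes is harmless).

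The rule that assigns to $v$ the block $A_r$ maximizing the posterior probability of $\{\sigma_v\in A_r\}$ is optimal, and, exactly as in the composite-testing derivation preceding the lemma (applied with $t$ groups, using the min-of-sums inequality), its error probability $\tilde P_e$ satisfies
\[
\tilde P_e\;\le\;\sum_{r<s}\ \sum_{i\in A_r,\,j\in A_s}\ \sum_{x\in\mZ_+^k}\min\!\big(\pp\{d(v)=x\mid\sigma_v=i\}\,p_i,\ \pp\{d(v)=x\mid\sigma_v=j\}\,p_j\big).
\]
Each inner sum is a pairwise Binomial overlap, so \eqref{bipo} (equivalently, the sentence immediately following it) bounds it by $2\sum_{x}\min(\mathcal{P}_{(1-\gamma(n))\ln(n)(PQ)_i}(x)p_i,\ \mathcal{P}_{(1-\gamma(n))\ln(n)(PQ)_j}(x)p_j)+O(1/n^2)$, reducing everything to pairwise overlaps of multivariate Poisson laws.

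Now I would apply Lemma~\ref{hell-expo}. Running its argument with Poisson parameter $(1-\gamma(n))\ln(n)(PQ)_i$ in place of $\ln(n)c_i$, the dominant term of the sum has the form $e^{-(1-\gamma(n))\ln(n)\,\dd((PQ)_i,(PQ)_j)}\big/\sqrt{(1-\gamma(n))\ln n}$, the $\sqrt{\,\cdot\,}$ being the width of the dominant Poisson coordinate, whose mean is now $\asymp(1-\gamma(n))\ln n$; here I use that $\dd$ is positively homogeneous of degree one, so $\dd((1-\gamma(n))(PQ)_i,(1-\gamma(n))(PQ)_j)=(1-\gamma(n))\,\dd((PQ)_i,(PQ)_j)$. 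This is
\[
O\!\Big(n^{-(1-\gamma(n))\dd((PQ)_i,(PQ)_j)-\frac12\ln((1-\gamma(n))\ln n)/\ln n}\Big).
\]
Since there are at most $\binom{k}{2}$ pairs $(i,j)$ lying in distinct blocks — a constant — the triple sum is dominated by the pair minimizing the exponent, i.e.\ by $(1-\gamma(n))\Delta+\frac12\ln((1-\gamma(n))\ln n)/\ln n$. Adding the $O(1/n^2)$ incurred by Poissonization and the $o(1/n^2)$ probability that the size concentration fails gives $\tilde P_e=O\big(n^{-(1-\gamma(n))\Delta-\frac12\ln((1-\gamma(n))\ln n)/\ln n}+1/n^2\big)$, which is the claim.

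The only genuinely delicate point is the bookkeeping in the last step: one must check that scaling every Poisson parameter by $1-\gamma(n)$ scales the error exponent by exactly that factor (immediate from homogeneity of $\dd$) while perturbing the sub-polynomial correction only through the $\sqrt{(1-\gamma(n))\ln n}$ width of the dominant coordinate in the proof of Lemma~\ref{hell-expo}; any stray $O(1)$ constants there are absorbed into the $O(\cdot)$ since $n^{O(1/\ln n)}=O(1)$. Everything else — the composite-testing inequality, the Poissonization step via \eqref{bipo}, and the fact that only constantly many pairs contribute — is mechanical.
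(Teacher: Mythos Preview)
Your proposal is correct and follows essentially the same route as the paper: the lemma is stated as a summary of the preceding discussion, and you have faithfully reconstructed that discussion --- conditioning on community sizes, reducing the composite test to pairwise overlaps via the min-of-sums inequality, Poissonizing via \eqref{bipo}, and invoking Lemma~\ref{hell-expo} together with the degree-one homogeneity of $\dd$ to get the exponent $(1-\gamma(n))\Delta$ and the $\sqrt{(1-\gamma(n))\ln n}$ correction. The only thing you make more explicit than the paper is the extension from two to $t$ groups, which the paper dispatches in one sentence; your treatment is the intended one.
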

Moreover, we will need the following ``robust'' version of this lemma to prove Theorem \ref{thm2}. 
\begin{lemma}\label{testing-lemma2}
Let $k \in \mZ_+$ and let $A_1,\dots,A_t$ be disjoint subsets of $[k]$ such that $\cup_{i=1}^t A_i = [k]$. Let $G$ be a random graph drawn under $\gs(n,p,(1-\gamma(n))Q)$. There exist $c_1$, $c_2$, and $c_3$ such that for any $\delta$, assigning the most likely community subset $A_i$ to a node $v$ based on a distortion of its degree profile that independently gets each node's community wrong with probability at most $\delta$ gives the correct assignment with probability at least $$1- c_2\cdot (1+c_1\delta)^{c_3\ln n} \cdot \left(n^{-(1-\gamma(n))\Delta -\frac{1}{2}\ln((1-\gamma(n))\ln n)/\ln n) } \right)-\frac{1}{n^2},$$ 
where
\begin{align}
\Delta = \min_{r,s \in [t] \atop{r \neq s}} \min_{ i \in A_r, j \in A_s}  \dd((pQ)_i, (pQ)_j).
\end{align}
\end{lemma}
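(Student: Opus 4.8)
\emph{Plan.} The goal is to show that the distortion inflates the per-vertex misclassification probability of Lemma~\ref{testing-lemma} by at most the factor $(1+c_1\delta)^{c_3\ln n}$, through a likelihood-ratio comparison after passing to an idealized Poisson model. Fix a vertex $v$; I would bound the probability of misclassifying $v$ conditionally on its true community $\sigma_v=i$ and on the true labels $\sigma_{-v}$ of the other vertices, and then average over $i$ with weights $p_i$. Write $\mu^{(i)}:=(1-\gamma(n))\ln(n)(PQ)_i$ for the mean of the idealized degree profile of a community-$i$ vertex, and note that every coordinate $\mu^{(i)}_\ell=\Theta(\ln n)$, uniformly (since $Q$ has positive entries; the implied constants will be allowed to depend on $p,Q$ and a lower bound for $1-\gamma(n)$, which is available in all the applications).

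\emph{Step 1: reduce to the Poisson model.} The procedure ``distort the degree profile of $v$ (flipping each other vertex's label independently with probability $\le\delta$), then run the composite MAP test of Section~\ref{testing} for the clean laws $\mathcal{P}(\mu^{(1)}),\dots,\mathcal{P}(\mu^{(k)})$ with priors $p$'' is, once $\sigma_{-v}$ is fixed, a randomized function of $d(v)$ alone (the edge-set is independent of the label flips, so the distorted profile is conditionally on $(\sigma_{-v},d(v))$ a draw from a fixed kernel). Hence the one-sided Binomial--Poisson estimate \eqref{bipo} (together with the concentration of community sizes) bounds its error by twice the error it incurs in the model where $d(v)\sim\mathcal{P}(\mu^{(i)})$, plus $O(1/n^2)$. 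In that model, by the Poisson marking (thinning) theorem, the distorted profile $\tilde d(v)$ is again a product Poisson, with mean $\tilde\mu^{(i)}_\ell=\sum_{u}(1-\gamma(n))\tfrac{\ln n}{n}Q_{\sigma_u,i}\,r_{u,\ell}$, where $r_{u,\cdot}$ is the (possibly $u$-dependent) relabelling kernel with $r_{u,\sigma_u}\ge 1-\delta$. For \emph{any} such kernels, relabelling preserves the total count, so $\|\tilde\mu^{(i)}\|_1=\|\mu^{(i)}\|_1$; and since $r_{u,\ell}\le 1$ when $\sigma_u=\ell$ and $r_{u,\ell}\le\delta$ when $\sigma_u\ne\ell$, summing over $u$ gives $\tilde\mu^{(i)}_\ell\le\mu^{(i)}_\ell+\delta(\|\mu^{(i)}\|_1-\mu^{(i)}_\ell)\le(1+c_1\delta)\mu^{(i)}_\ell$ for a constant $c_1=c_1(p,Q)$ (finite because each $\mu^{(i)}_\ell$ is of the same order as $\|\mu^{(i)}\|_1$).

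\emph{Step 2: transfer the error pointwise.} Because $\sum_\ell(\mu^{(i)}_\ell-\tilde\mu^{(i)}_\ell)=0$, for $x\in\mZ_+^k$ the ratio
\[
\frac{\mathcal{P}_{\tilde\mu^{(i)}}(x)}{\mathcal{P}_{\mu^{(i)}}(x)}=\prod_\ell\Bigl(\tfrac{\tilde\mu^{(i)}_\ell}{\mu^{(i)}_\ell}\Bigr)^{x_\ell}e^{\mu^{(i)}_\ell-\tilde\mu^{(i)}_\ell}=\prod_\ell\Bigl(\tfrac{\tilde\mu^{(i)}_\ell}{\mu^{(i)}_\ell}\Bigr)^{x_\ell}\le(1+c_1\delta)^{|x|},
\]
where the last step discards the coordinates with $\tilde\mu^{(i)}_\ell\le\mu^{(i)}_\ell$. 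Splitting the composite-test error in the Poisson model according to whether $|x|\le c_3\ln n$ or $|x|>c_3\ln n$: the bulk contributes at most $(1+c_1\delta)^{c_3\ln n}$ times the \emph{clean-model} composite-test error, while the tail contributes at most $\mathcal{P}_{\tilde\mu^{(i)}}(|x|>c_3\ln n)=\mathcal{P}_{\mu^{(i)}}(|x|>c_3\ln n)\le 1/n^2$ once $c_3$ is a large enough constant (using that $|\tilde d(v)|$ has the same law as $|d(v)|\sim\mathcal{P}(\|\mu^{(i)}\|_1)$ with $\|\mu^{(i)}\|_1=\Theta(\ln n)$, and that $c_3$ is chosen independently of $\delta$, so $c_1,c_2,c_3$ end up uniform in $\delta$). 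Finally, exactly as in the proof of Lemma~\ref{testing-lemma}, the clean-model composite-test error is at most $\sum_x\min(\mathcal{P}_{\mu^{(i)}}(x)p_i,\mathcal{P}_{\mu^{(j)}}(x)p_j)$ summed over pairs $i,j$ lying in different subsets, and Lemma~\ref{hell-expo} together with the homogeneity $\dd(\alpha\mu,\alpha\nu)=\alpha\,\dd(\mu,\nu)$ (to extract the factor $1-\gamma(n)$) bounds each term by $O\!\left(n^{-(1-\gamma(n))\Delta-\frac{1}{2}\ln((1-\gamma(n))\ln n)/\ln n}\right)$. Assembling the three displays and averaging over $i$, with $c_2$ absorbing the factor $2$ and the implied constants, yields the stated bound.

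\emph{Main obstacle.} I expect the delicate point to be Step~1, i.e.\ passing to the Poisson model with a good enough additive error: a direct multivariate Poisson approximation of the distorted profile only gives error $O(\ln^2 n/n)$, which is far too weak to survive the union bound over the $n$ vertices used in the proof of Theorem~\ref{thm2}. The argument is rescued because the distortion-then-test map is a randomized function of $v$'s degree profile, so the sharp one-sided bound \eqref{bipo} applies unchanged, and because under Poisson marking the distorted profile is \emph{exactly} a product Poisson whose mean obeys the two clean estimates $\|\tilde\mu^{(i)}\|_1=\|\mu^{(i)}\|_1$ and $\tilde\mu^{(i)}_\ell\le(1+c_1\delta)\mu^{(i)}_\ell$ for arbitrary per-vertex corruption; once these are in place, everything else is the likelihood-ratio bookkeeping above.
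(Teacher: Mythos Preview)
Your argument is essentially correct and reaches the same $(1+c_1\delta)^{c_3\ln n}$ inflation factor, but it gets there by a different and more elaborate path than the paper does.

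The paper's proof is a two-line likelihood-ratio argument that does \emph{not} condition on $\sigma_{-v}$ and does not pass to the Poisson model for this step. Given $\sigma_v=i$ and given that $u$ is a neighbor of $v$, the true label of $u$ equals $\sigma$ with probability $p_\sigma Q_{\sigma,i}/\sum_{\sigma'}p_{\sigma'}Q_{\sigma',i}\ge 1/c_1$ (with $c_1=\max_{i,j}\sum_{i'}p_{i'}Q_{i',j}/(p_iQ_{i,j})$); hence the reported label equals $\sigma$ with probability at most $(1+c_1\delta)$ times that. Since labels and distortions are independent across neighbors, the law of the reported profile (conditional on $|N_1(v)|=m$) is pointwise at most $(1+c_1\delta)^m$ times the law of the true profile. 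Taking $m<c_3\ln n$ (which holds with probability $1-1/n^2$) and invoking Lemma~\ref{testing-lemma} for the clean profile finishes the proof. No Poisson marking, no mean comparison $\tilde\mu_\ell\le(1+c_1\delta)\mu_\ell$ is needed.

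Your route---condition on $\sigma_{-v}$, pass to Poisson via \eqref{bipo}, then compare $\mathcal P_{\tilde\mu^{(i)}}$ to $\mathcal P_{\mu^{(i)}}$ via the pointwise ratio---also works and yields the same constant $c_1$ (your ratio $\|\mu^{(i)}\|_1/\mu^{(i)}_\ell$ is exactly the paper's $c_1$). One technical point to be careful about: once you condition on $\sigma_{-v}$, the distortion kernel assigns each of the $d_\ell(v)$ points to a vertex of community $\ell$ \emph{without replacement} before relabelling, so strictly speaking Poisson marking does not apply verbatim to that kernel. The cleanest patch is to note that replacing without- by with-replacement changes the kernel only on an event of probability $O(\log^2 n/n)$ \emph{conditionally on $|d(v)|\le c_3\ln n$}, and since you have already restricted to that event this extra error is absorbed; alternatively one can apply a \eqref{bipo}-type one-sided bound directly to the distorted profile $\tilde d(v)$ (which is itself a sum of independent $\{0,e_1,\dots,e_k\}$-valued vectors). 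Either way the argument goes through, but the paper's averaging-over-$\sigma_{-v}$ device sidesteps this entirely.
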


\begin{proof}
Let $$c_1=\max_{i,j} \sum p_{i'}q_{i',j}/(p_i q_{i,j}).$$ The key observation is that $v$'s $m$th neighbor had at least a $\min_{i,j} (p_i q_{i,j})/\sum p_{i'}q_{i',j}$ chance of actually being in community $\sigma$ for each $\sigma$, so its probability of being reported as being in community $\sigma$ is at most $1+c_1\delta$ times the probability that it actually is. So, the probability that its reported degree profile is bad is at most $(1+c_1\delta)^{|N_1(v)|}$ times the probability that its actual degree profile is bad. Choose $c_3$ such that each vertex in the graph has degree less than $c_3\ln n$ with probability $1-\frac{1}{n^2}$ and the conclusion follows from this and the previous bounds on the probability that classifying a vertex based on its degree profile fails.
\end{proof}

\subsection{Proof of Theorem \ref{thm2}}\label{proof-thm-2}
We break the proof into two parts, the possibility and impossibility parts.  \\

\noindent
{\bf Claim 1 (achievability).} Let $G \sim \gs(n,p,Q)$ and $\gamma=  \frac{\Delta -1}{2 \Delta} +\frac{\ln\ln n}{4 \ln n}$, where $$\Delta = \min_{r,s \in [t] \atop{r \neq s}} \min_{ i \in A_r, j \in A_s}  \dd((pQ)_i, (pQ)_j)$$ and $A_1,\dots,A_t$ is a partition of $[k]$. {\tt Degree-profiling}$(G,p,Q,\gamma)$ recovers the partition $[k] = \sqcup_{s=1}^t A_s$ with probability $1-o_n(1)$ if for all $i,j$ in $[k]$ that are in different subsets,
\begin{align}
\dd ((PQ)_i , (PQ)_j) \geq 1. \label{d1b}
\end{align}

The idea behind Claim 1 is contained in Lemma \ref{testing-lemma}. However, there are several technical steps that need to be handled:
\begin{enumerate}
\item The graphs $G'$ and $G''$ obtained in step 1 of the algorithm are correlated, since an edge cannot be both in $G'$ and $G''$. However, this effect can be discarded since two independent versions would share edges with low enough probability. 
\item  The classification in step 2 using {\tt Sphere-comparison} has a vanishing fraction of vertices which are wrongly labeled. This requires using the robust version of Lemma \ref{testing-lemma}, namely Lemma \ref{testing-lemma2}. 
\item In the case where $\dd ((PQ)_i , (PQ)_j) = 1$ a more careful classification is needed as carried in steps 3 and 4 of the algorithm.    
\end{enumerate} 

\begin{proof}
With probability $1-O(1/n)$, no vertex in the graph has degree greater than $c_3\ln n$. Assuming that this holds, no vertex's set of neighbors in $G''$ is more than $$(1-\max q_{i,j} \ln n/n)^{-c_3\ln n}\cdot (n/(n-c_3\ln n))^{c_3\ln n}=1+o(1)$$ times as likely to occur as it would be if $G''$ were independent of $G'$. So, the fact that they are not has negligible impact on the algorithm's error rate. As $n$ goes to infinity, $\sigma'$'s expected error rate goes to $0$, so the expected error in the algorithm's estimates of the connectivity rates between its communities also goes to $0$. Thus, the algorithm labels the communities correctly (up to equivalent relabeling) with probability $1-o(1)$. Now, let $$\delta=(e^{\frac{(1-\gamma)}{2c_3} \min_{i\ne j} \dd((PQ)_i,(PQ)_j)}-1)/c_1.$$ By Lemma \ref{testing-lemma2}, if the classification in step $2$ has an error rate of at most $\delta$, then the classification in step $3$ has an error rate of $$O(n^{- (1-\gamma)\min_{i\ne j} \dd((PQ)_i,(PQ)_j)/2}+1/n^2),$$ observing that if $\sigma'_v\ne \sigma_v$ the error rate of $\sigma''_{v'}$ for $v'$ adjacent to $v$ is at worst multiplied by a constant. That in turn ensures that the final classification has an error rate of at most \[O\left((1+O(n^{- (1-\gamma)\min_{i\ne j} \dd((PQ)_i,(PQ)_j)/2}+1/n^2))^{c_3\ln n} \frac{1}{n}\ln{n}^{-1/4}\right)=O\left(\frac{1}{n}\ln n^{-1/4}\right) .\]
\end{proof}

\noindent
{\bf Claim 2 (converse).} Let $G \sim \gs(n,p,Q)$ and $A_1,\dots,A_t$ a partition of $[k]$. If there exist $r,s \in [t]$, $s\neq t$, $i \in A_r$, and $j \in A_s$ such that 
\begin{align}
\dd ((PQ)_i , (PQ)_j) < 1, 
\end{align}
then every algorithm classifying the vertices of $G$ into elements $A_1,\dots,A_t$ must mis-classify at least one vertex with probability $1-o_n(1)$.

\begin{proof}
With probability $1-o(1)$, every community of $G$ has a size that is within a factor of $1+\ln n/\sqrt{n}$ of its expected size. Assume that this holds. Let $S$ be a random set of $n/\ln^3(n)$ of $G$'s vertices. With probability $1-o(1)$ the number of vertices in $S$ in community $\ell$ is within $\sqrt{n}$ of $p_\ell n/\log^3 n$ for each $\ell$, and a randomly selected vertex in $S$ is adjacent to another vertex in $S$ with probability $o(1)$. Next, choose $i$ and $j$ such that $i\in A_r$, $j\in A_s$, $r\ne s$, and $\dd ((PQ)_i , (PQ)_j) < 1$. Now, let 
\begin{align}
x_\ell=\lfloor (PQ)_{\ell,i}^t (PQ)_{\ell,j}^{1-t}\ln n\rfloor
\end{align}
for each $\ell \in [k]$, where $t\in [0,1]$ is chosen to maximize 
\begin{align}
\sum t(PQ)_{\ell,i}+(1-t)(PQ)_{\ell,j}-(PQ)_{\ell,i}^t(PQ)_{\ell,j}^{1-t}.
\end{align}
Roughly speaking, we want to show that every vertex in community $i$ or $j$ has a degree profile of $x$ with probability 
\begin{align}
\Omega(n^{-\dd ((PQ)_i , (PQ)_j)}/\ln^{k/2}(n))
\end{align}
and that there is no way to determine which community the vertices with this degree profile are in.

More precisely, call a vertex in $S$ ambiguous if it has exactly $x_\ell$ neighbors in community $\ell$ that are not in $S$ for each $\ell$. The probability distribution of a vertex's numbers of neighbors in $G\backslash S$ in each community is essentially a multivariable poisson distribution. So, by the assumption that $\dd ((PQ)_i , (PQ)_j) < 1$ and the argument in Lemma $\ref{hell-expo}$, there exists $\epsilon>0$ such that a vertex in $S$ that is in either community $i$ or community $j$ is ambiguous with probability $\Omega(n^{\epsilon-1})$. Furthermore, for a fixed community assignment and choice of $S$, there is no dependence between whether or not any two vertices are ambiguous. Also, an ambiguous vertex is not adjacent to any other vertex in $S$ with probability $1-o(1)$. So, with probability $1-o(1)$ there are at least $\ln n$ ambigous vertices in community $i$ that are not adjacent to any other vertices in $S$ and $\ln n$ ambiguous vertices in community $j$ that are not adjacent to any other vertices in $S$. These vertices are indistinguishable, so no algorithm classifies them all correctly with probability greater than $1/{2\ln n\choose \ln n}$. Therefore, every algorithm must misclassify at least one vertex with probability $1-o(1)$. 
\end{proof}


\section*{Acknowledgements}
We would like to thank Bell Labs for supporting part of this research, as well as Sergio Verd\'u and Imre Csisz\'ar for useful discussions on $f$-divergences. 



\bibliographystyle{amsalpha}
\bibliography{gen2}

\end{document}